\makeatletter\@addtoreset{equation}{section}\makeatother
\newtheorem{thm}{Theorem}[section] 
\newtheorem{lem}[thm]{Lemma}  
\newtheorem{prop}[thm]{Proposition}  
\newtheorem{hyp}{Hypothesis}
\newtheoremstyle{named}{}{}{\itshape}{}{\bfseries}{.}{.5em}{\thmnote{#3's }#1}
\theoremstyle{named}
\theoremstyle{definition}
\begin{document}

\title{Isolas of multi-pulse solutions to lattice dynamical systems}

\author{
Jason J. Bramburger\thanks{Division of Applied Mathematics, Brown University, Providence, RI, 02906}\ \thanks{Department of Mathematics and Statistics, University of Victoria, Victoria, BC, V8P 5C2}
}

\date{}
\maketitle

\begin{abstract}
This work investigates the existence and bifurcation structure of multi-pulse steady-state solutions to bistable lattice dynamical systems. Such solutions are characterized by multiple compact disconnected regions where the solution resembles one of the bistable states and resembles another trivial bistable state outside of these compact sets. It is shown that the bifurcation curves of these multi-pulse solutions lie along closed and bounded curves (isolas), even when single-pulse solutions lie along unbounded curves. These results are applied to a discrete Nagumo differential equation and we show that the hypotheses of this work can be confirmed analytically near the anti-continuum limit. Results are demonstrated with a number of numerical investigations.      
\end{abstract}

%%%%%%%%%%%%%%%%%%%%%%%%%%%%%%%%%%%%%%%%%%%%%%%%%%%%%%%%%%%%%%%%%%%%%%%%%%%%

\section{Introduction} %Section: Introduction ---------------------------------------------------------------------------------------------------------------------------------------------------------

The competition between bistable states in nonlinear systems can lead to fascinating and unintuitive structures. Some of the most documented examples are localized structures which resemble a patterned or activated state inside of a compact spatial region and a second homogeneous state outside of this compact region. Localized structures can be found in many applications, including as crime hotspots \cite{Hotspot,Hotspot2,Hotspot3}, vegetation patterns \cite{Veg1,Veg2,Veg3}, and soft matter quasicrystals \cite{Subramanian}. They have further been observed in chemical reactions \cite{Chemical}, supported elastic struts \cite{Michaels}, semiconductors \cite{Semiconductor}, and ferrofluids \cite{Ferrofluid}.   

\begin{figure} %Snaking Figure
\centering
\includegraphics[width=\textwidth]{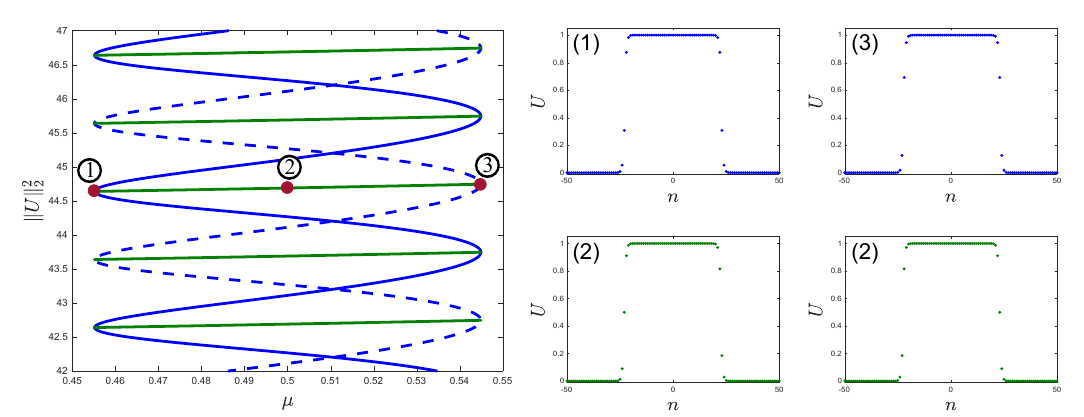}
\caption{Snaking of single-pulses in (\ref{LDS_Intro}) with $d = 0.1$. Symmetric pulses come in two types: on-site (solid blue) which roughly have an odd number of elements on their plateau and off-site (dashed blue) which roughly have an even number of elements on their plateau. The two bifurcation curves of symmetric equilibria are connected by branches of asymmetric solutions which bifurcate near the left and right extremities of the symmetric curves in pitchfork bifurcations. Asymmetric solutions come in pairs, as is demonstrated by the sample profiles $(2)$.}
\label{fig:Snaking}
\end{figure} 

In this manuscript we focus on localized solutions to lattice dynamical systems. As an example, consider the discrete Nagumo equation  
	\begin{equation}\label{LDS_Intro}
		\dot{U}_n = d(U_{n+1} + U_{n-1} - 2U_n) + U_n(U_n - \mu)(1-U_n), \quad n\in\mathbb{Z},
	\end{equation}
where $d \geq 0$ describes the strength of interaction between nearest-neighbours on the lattice $\mathbb{Z}$, $\mu$ is a real bifurcation parameter, and $U_n$ are the real-valued state variables. As one can see in Figure~\ref{fig:Snaking}, localized steady-state solutions of (\ref{LDS_Intro}) can arrange themselves in complicated existence diagrams with respect to varying $\mu$. One can see that the solutions have a single connected region of activation with $U_n \approx 1$, while outside of this region of activation the solution resembles the trivial rest state in that $U_n \approx 0$. This single connected region of activation leads to the terminology that these localized solutions are single-pulses. Furthermore, symmetric single-pulses of (\ref{LDS_Intro}) lie along unbounded curves that bounce back and forth between fixed values of $\mu$, while the length of the region of activation monotonically increases without bound. Such a bifurcation scenario is termed {\em snaking} and it has been documented extensively in lattice dynamical systems \cite{Bramburger,Chong,Chong2,Kusdiantara,McCullen,Papangelo,Susanto,Taylor,Yulin}. Beyond the symmetric single-pulses, there also exist asymmetric single-pulses which bifurcate from the symmetric snaking branches in a pitchfork bifurcation. The bifurcation curves of asymmetric single-pulses are known to connect the two different snaking branches with endpoints given by pitchfork bifurcations which take place near turning points of opposite curvature on the symmetric branches.         

In the case of partial differential equations posed on $\mathbb{R}$, complete analytical descriptions of the processes that lead to snaking are now available \cite{Aougab,Beck,kozyreff1,kozyreff2}. In particular, it has been shown that the specific form of the bifurcation curves of single-pulse solutions are entirely dictated by the bifurcation structure of front solutions which asymptotically connect the homogeneous background state to the patterned or activated state. Recently these results were extended to lattice dynamical systems in \cite{Bramburger} and fully explain the organization of bifurcation curves in Figure~\ref{fig:Snaking}. Furthermore, in the case of (\ref{LDS_Intro}) we can exploit the anti-continuum limit, corresponding to the uncoupled system arising when setting $d = 0$, to verify the conditions of the general theory for $0 < d \ll 1$. This rigorous verification of the theoretical analysis is something which is at present completely unavailable in the continuous spatial setting.

Following \cite{Bramburger}, steady-state solutions of (\ref{LDS_Intro}) are bounded solutions of the discrete dynamical system
\begin{equation}\label{Map_Intro}
	\begin{split}
		u_{n+1} &= v_n, \\
		v_{n+1} &= 2v_n - u_n -\frac{1}{d}v_n(v_n - \mu)(1 - v_n),
	\end{split}
\end{equation}
where $(u_n,v_n)=(U_{n-1},U_n)$. In the setting of (\ref{Map_Intro}), fronts and localized solutions of (\ref{LDS_Intro}) manifest themselves as heteroclinic and homoclinic orbits, respectively. More precisely, the single-pulse solutions presented in Figure~\ref{fig:Snaking} are homoclinic orbits to the trivial fixed point $(u_n,v_n) = (0,0)$ for which a large number of iterates remain in a neighbourhood of another fixed point $(u_n,v_n) = (1,1)$, corresponding to the homogeneous steady-state $U_n = 1$ of (\ref{LDS_Intro}). In the case of single-pulse solutions, these iterates only enter and leave the neighbourhood of the fixed point $(u_n,v_n) =(1,1)$ once, but Figure~\ref{fig:2Pulse} presents evidence that there exists solutions which enter and leave this neighbourhood at least twice as well. Solutions that enter and leave this neighbourhood more than once are termed multi-pulses since the corresponding localized steady-states of (\ref{LDS_Intro}) have multiple disconnected regions of activation. More precisely, a $k$-pulse is a multi-pulse that has exactly $k \geq 2$ disconnected regions of activation. Figure~\ref{fig:2Pulse} further demonstrates that although the single-pulses of (\ref{LDS_Intro}) snake, at least some of the multi-pulses do not. That is, Figure~\ref{fig:2Pulse} presents an {\em isola} of $2$-pulses, a closed curve in the bifurcation diagram. 

\begin{figure} %2Pulse Isola Figure
\centering
\includegraphics[width=\textwidth]{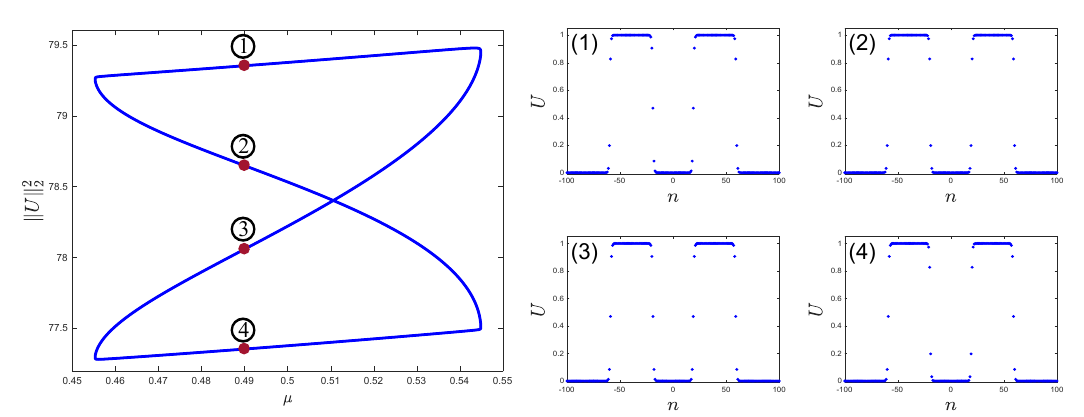}
\caption{An isola of symmetric 2-pulses in (\ref{LDS_Intro}) with $d = 0.1$. Sample profiles $(1)$ and $(4)$ resemble two mirrored asymmetric single-pulses, whereas $(2)$ and $(3)$ resemble two symmetric single-pulses. All sample profiles are provided for the parameter value $\mu = 0.49$.}
\label{fig:2Pulse}
\end{figure} 

A number of numerical investigations have shown that multi-pulses lie along isolas \cite{Burke,Heijden,2Pulse,Wadee}, leading to the conjecture that this is always the case. Despite the significant amount of attention on the bifurcation structure of single-pulse solutions to both lattice dynamical systems and partial differential equations, only the work of Knobloch et al. \cite{2Pulse} has provided positive affirmation of this conjecture for $2$-pulse solutions to partial differential equations. In this manuscript we extend this result to show that multi-pulse solutions of a class of lattice dynamical systems with an arbitrary number of disconnected regions of localization lie along isolas. In particular, this work applies to (\ref{LDS_Intro}), demonstrating the existence and bifurcation structure of a number of localized solutions to lattice dynamical systems. Hence, this work goes far beyond the results known for the spatially continuous setting of partial differential equations and therefore its techniques could be used to inform future studies of localized structures beyond the lattice setting.    

The existence of multi-pulse solutions to (\ref{LDS_Intro}) should not come as a surprise to the reader. Indeed, seminal results in the theory of discrete dynamical systems such as the Smale horseshoe \cite{Smale} and the $\lambda$-Lemma \cite{Palis} can be used to demonstrate the existence of multi-pulses based on the existence of transverse homoclinic orbits of (\ref{Map_Intro}), i.e. single-pulses of (\ref{LDS_Intro}). What is new to this work is that we describe the complete bifurcation structure of multi-pulses to show that they cannot snake, even when the single-pulses do. Furthermore, instead of using homoclinic orbits to obtain the existence of multi-pulses, this work and its predecessor \cite{Bramburger} establish their results using a single curve of heteroclinic orbits. As mentioned above, these heteroclinic orbits correspond to steady-state fronts of the lattice equation (\ref{LDS_Intro}), which have long been studied in the context of traveling wave solutions to lattice dynamical systems which fail to propagate \cite{Brucal,Clerc,Elmer,Elmer2,Fath,Guo,Keener}. Therefore, this work builds off of these previous studies since their results can be used to confirm the hypotheses required for the results of this manuscript.      

This paper is organized as follows. In Section~\ref{sec:MainResults} we formulate the hypotheses and present the main results for general reversible discrete dynamical systems. We then turn back to the specific example of the discrete Nagumo equation (\ref{LDS_Intro}) in Section~\ref{sec:LDS}. Our discussion of equation \eqref{LDS_Intro} includes an analytic verification of the hypotheses for $0 < d \ll 1$ and numerical validation of the the results, followed by a brief discussion of the expected stability properties of the single- and multi-pulses. We leave all proofs to Section~\ref{sec:Proofs} and conclude with a discussion of the results and future directions in Section~\ref{sec:Discussion}.

\section{Main Results}\label{sec:MainResults} %Section: Main Results ---------------------------------------------------------------------------------------------------------------------------------------------------------

We consider a smooth function $F:\mathbb{R}^2 \times \mathbb{R} \to \mathbb{R}^2$ to define the mapping
\begin{equation}\label{Map}
	u_{n+1} = F(u_n,\mu),
\end{equation}
where $\mu \in \mathbb{R}$ is a bifurcation parameter. We further assume that $F$ is a diffeomorphism for each fixed $\mu$, leading to the backwards iteration scheme
\begin{equation}\label{InverseMap}
	u_{n-1} = F^{-1}(u_n,\mu),
\end{equation} 
where $F^{-1}$ is the inverse of $F$ at each $\mu\in\mathbb{R}$. The following hypothesis assumes that $F$ is a reversible mapping.

\begin{hyp}\label{hyp:Reverser} %Hypothesis: Reversibility
	There exists a linear map $\mathcal{R}:\mathbb{R}^2\to\mathbb{R}^2$ with $\mathcal{R}^2 = 1$ and $\mathrm{dim\ Fix}(\mathcal{R}) = 1$ so that $F^{-1}(u,\mu) = \mathcal{R}F(\mathcal{R}u,\mu)$ for all $u \in \mathbb{R}^2$ and $\mu \in\mathbb{R}$.
\end{hyp} %

Hypothesis~\ref{hyp:Reverser} implies that if $\{u_n\}_{n\in\mathbb{Z}}$ is a solution of (\ref{Map}), so is $\{\mathcal{R}u_{-n}\}_{n\in\mathbb{Z}}$. Then, a solution $\{u_n\}_{n\in\mathbb{Z}}$ of (\ref{Map}) is said to be {\bf symmetric} if $\mathcal{R}\{u_n\}_{n\in\mathbb{Z}} = \{u_n\}_{n\in\mathbb{Z}}$. In \cite[Lemma~2.1]{Bramburger} it was shown that a solution $\{u_n\}_{n\in\mathbb{Z}}$  to (\ref{Map}) is symmetric if, and only if, there exists an $n\in\mathbb{Z}$ such that $\mathcal{R}u_n = u_n$ or $\mathcal{R}u_{n-1} = u_n$. We refer to symmetric solutions satisfying $\mathcal{R}u_n = u_n$ for some $n \in \mathbb{Z}$ as {\bf on-site}, while those satisfying $\mathcal{R}u_{n-1} = u_n$ for some $n \in \mathbb{Z}$ are referred to as {\bf off-site}.   

\begin{hyp}\label{hyp:FixedPts} %Hypothesis: Fixed Points
	There exists a compact interval $J \subset \mathbb{R}^2$ with nonempty interior such that for each $\mu \in J$, the points $u = 0,u_* \in {\rm Fix}(\mathcal{R})$ are hyperbolic fixed points of (\ref{Map}). We further assume that the eigenvalues of the matrices $F_u(0,\mu)$ and $F_u(u_*,\mu)$ are real and positive for all $\mu \in J$.
\end{hyp} %

We note that the assumption that the eigenvalues of the matrices $F_u(0,\mu)$ and $F_u(u_*,\mu)$ be positive is not a major restriction to our work here. The reason for this is that reversibility of (\ref{Map}) enforces a strict structure to the eigenvalues of the matrices $F_u(0,\mu)$ and $F_u(u_*,\mu)$ in that if $\lambda$ is a nonzero eigenvalue, then so must be $\bar{\lambda}, \lambda^{-1}, \bar{\lambda}^{-1}$ \cite[Proposition~16.3.4]{Wiggins}. Hence, knowing that $u=0,u_*$ are hyperbolic implies that both eigenvalues of the matrices $F_u(0,\mu)$ and $F_u(u_*,\mu)$ are real and have the same sign. In the case that they are both negative one may consider the second-iterate map, $F^2:=F\circ F$, in place of $F$ to guarantee that both Hypotheses~\ref{hyp:Reverser} and \ref{hyp:FixedPts} are satisfied.

Based on the previous comments, we see that for all $\mu \in J$ the fixed points $u = 0,u_*$ must be saddles. Therefore, the stable manifold theorem implies that for all $\mu \in J$, both $u = 0$ and $u = u_*$ have one-dimensional stable and unstable manifolds associated to them. Throughout this manuscript we will denote the stable and unstable manifolds of the fixed point $u = 0$ as $W^s(0,\mu)$ and $W^u(0,\mu)$, respectively. Analogously, $W^s(u_*,\mu)$ and $W^u(u_*,\mu)$ denote the stable and unstable manifolds of $u_*$, and we note all stable and unstable manifolds are smooth with respect to varying $\mu \in J$. Reversibility of (\ref{Map}) guarantees the following identities
\begin{equation}\label{RevMans}
	W^s(0,\mu) = \mathcal{R}W^u(0,\mu), \quad \quad W^s(u_*,\mu) = \mathcal{R}W^u(u_*,\mu)
\end{equation}
for all $\mu \in J$. 

Our interest now lies in characterizing heteroclinic orbits of (\ref{Map}) which connect $0$ and $u_*$ asymptotically. Particularly, the set 
\[
	X := \bigcup_{\mu \in J} (W^u(0,\mu)\cap W^s(u_*,\mu))\times\{\mu\} \subset \ell^\infty \times J
\] 
is the set of all heteroclinic orbits from the fixed point $0$ to $u_*$ over all $\mu \in J$. Using (\ref{RevMans}) we can see that 
\[
	\mathcal{R}X := \{(\mathcal{R}u,\mu):\ (u,\mu)\in X\},
\] 
is the set of all heteroclinic orbits from the fixed point $u_*$ to $0$ over all $\mu \in J$. We note that the elements of $X$ and $\mathcal{R}X$ are uniformly bounded, and hence they can naturally be seen as subsets of the Banach space $\ell^\infty \times J$, where $\ell^\infty$ is the set of all uniformly bounded sequences indexed by $\mathbb{Z}$ with norm given by
\[
	\|u\|_\infty := \sup_{n\in\mathbb{Z}} |u_n|,
\]    
for all $u = \{u_n\}_{n\in\mathbb{Z}} \in \ell^\infty$. The fact that (\ref{Map}) is autonomous implies that it is equivariant with respect to the left shift operator, $S:\ell^\infty\to\ell^\infty$, acting by
\begin{equation}\label{Shift}
	[Su]_n := u_{n+1}, \quad \forall n\in\mathbb{Z},u\in\ell^\infty.
\end{equation}
This equivariance property implies that if $u$ is a solution of (\ref{Map}), then so must be $Su$. Hence, to identify whether orbits of (\ref{Map}) are simply shifts of each other or not we consider the orbit space $\ell^\infty/\langle S\rangle$, which is the set of equivalence classes in $\ell^\infty$ with respect to the shift $S$ given such that for $u,v\in\ell^\infty$ we have $u\sim v$ if, and only if, there exists $p \in \mathbb{Z}$ such that $S^pu = v$. We will write $[u] = \{v\in\ell^\infty:\ u\sim v\}$ to denote the equivalence class of an element $u$ and define the quotient mapping 
\[
	\pi:\ell^\infty \times J \to \ell^\infty/\langle S\rangle \times J, \quad (u,\mu) \mapsto ([u],\mu)
\]
onto the orbit space. This leads to our final hypothesis. 

\begin{hyp}\label{hyp:Heteroclinic} %Hypothesis: Gamma set
	There exists a smooth, connected curve $\Gamma \subset X$ satisfying the following:
	\begin{enumerate}
		\item We have $\Gamma \cap (\ell^\infty\times \partial J) =\emptyset$, and there exists $K > 0$ such that $\|u\|_\infty \leq K$ for all $(u,\mu)\in \Gamma$.
		\item If $(u,\mu) \in \Gamma$ then the elements $u$ lie either along a transverse intersection or a quadratic tangency of the manifolds $W^u(0,\mu)$ and $W^s(u_*,\mu)$.  
		\item The set $\bar{\Gamma} := \pi(\Gamma)$ is a closed loop. That is, we can parameterize $\bar{\Gamma}$ by a smooth map $\gamma:[0,1]\to\bar{\Gamma}$ by $s \mapsto ([u](s),\mu(s))$ with $\gamma(0) = \gamma(1)$.
	\end{enumerate}
\end{hyp} %

Hypothesis~\ref{hyp:Heteroclinic} contains all of our assumptions about the existence of a smooth curve of heteroclinic orbits of (\ref{Map}) with respect to varying $\mu \in J$. We begin by assuming that $\Gamma$ is a smooth curve embedded in the interior of $\ell^\infty \times J$. Our second assumption dictates that the manifolds $W^u(0,\mu)$ and $W^s(u_*,\mu)$ intersect in the simplest ways possible. Generically these types of intersections should be all that are expected, although it may be possible to weaken Hypothesis~\ref{hyp:Heteroclinic} to more exotic intersections and still obtain the same results of this manuscript. The third and final assumption states that tracing out the curve $\Gamma$ in $\ell^\infty \times J$ eventually either returns to where it started, or to a shift of the original heteroclinic orbit for the same value of $\mu$.  

Our interest lies in constructing homoclinic orbits of the trivial fixed point to (\ref{Map}) that enter and leave a neighbourhood of the fixed point $u_*$ exactly $k \geq 2$ times. We are not only interested in determining the existence of such homoclinic orbits, hereby referred to {\em $k$-pulses}, but to understand their behaviour as $\mu$ is varied throughout $J$. The final assumption in Hypothesis~\ref{hyp:Heteroclinic} plays a crucial role in performing this task. To see this, notice that for each $s \in [0,1]$, each element of the curve $\gamma(s)$ lifts to infinitely many points in $\Gamma$, all of which are merely shifts of each other. Identifying one such preimage of $\gamma(0)$, we may produce a smooth curve in $\Gamma$, written as $(u(s),\mu(s))$, for which $\pi(u(s),\mu(s)) = \gamma(s)$ for all $s \in [0,1]$. Then, Hypothesis~\ref{hyp:Heteroclinic} states that $u(1) = S^pu(0)$ for some $p \in \mathbb{Z}$. It was shown in the  preceding work \cite{Bramburger} that the value of $p$ plays an important role in determining the bifurcation structure of $1$-pulses: if $p = 0$ the bifurcation curves are isolas and if $p \neq 0$ then the bifurcation curves snake, as in Figure~\ref{fig:Snaking}. Our present analysis still requires the assumption that $\pi(\Gamma)$ is a closed loop, but now we prove the existence and bifurcation structure of $k$-pulses with $k \geq 2$. The following theorem is our main result and particularly shows that all $k$-pulses lie along isolas, regardless of the value of $p$.

\begin{thm}\label{thm:MainResult} %Theorem: Main result on k-pulses
	For each $k \geq 2$, there exists $M_k \gg 1$, so that for each set of integers $N_1,\dots,N_{2k-1} \geq M_k$ there exists a $k$-pulse solution of (\ref{Map}). Generically the following is true:
	\begin{enumerate}
		\item (Symmetric Pulses) There exists both symmetric and asymmetric $k$-pulses of (\ref{Map}) with $N_j = N_{2k-j}$ for all $j \in \{1,\dots,2k-1\}$. The symmetric $k$-pulses lie along smooth closed curves in $\ell^\infty \times J$, are on-site if $N_k$ is odd, and off-site otherwise. 
		\item (Pitchforks to Asymmetric Pulses) Let $N_0 = \min\{N_1,\dots,N_{2k-1}\}$. There exists $\eta \in (0,1)$ such that if $\mu_\mathrm{sn} \in \mathring{J}$ is the location of a saddle-node bifurcation on the symmetric $k$-pulse curve, then there exists $\mu_\mathrm{pf} \in \mathring{J}$ with $|\mu_\mathrm{sn} - \mu_\mathrm{pf}| = \mathcal{O}(\eta^{N_0})$ and the property that at $\mu = \mu_\mathrm{pf}$ two branches of asymmetric  orbits (mapped into each other by $\mathcal{R}$) emanate in a pitchfork bifurcation from the symmetric $k$-pulse curve. The resulting bifurcation curves in $\ell^\infty\times J$ of these asymmetric $k$-pulses are smooth having endpoints given by pitchfork bifurcations near saddle-node bifurcations of the symmetric $k$-pulses with opposite curvature. 
		\item (Asymmetric Pulses) If $N_j \neq N_{2k-j}$ for at least one $j \in \{1,\dots,2k-1\}$ then the resulting $k$-pulse is asymmetric and lies along a smooth closed curve in $\ell^\infty\times J$. This curve of asymmetric $k$-pulses does not exhibit any other bifurcations than saddle-nodes.  
	\end{enumerate}
\end{thm}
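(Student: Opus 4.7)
The strategy is Lin's method: I decompose a candidate $k$-pulse into $2k$ heteroclinic segments --- $k$ fronts drawn from $\Gamma$ and $k$ backs drawn from $\mathcal{R}\Gamma$, arranged in alternating order --- separated by $2k-1$ dwell windows in which the orbit shadows $u_*$ (odd junctions) or $0$ (even junctions) for $N_j$ iterates. Parameterizing each segment by an arclength variable $s_j\in[0,1]$ along a lift of $\bar\Gamma$, and imposing matching in transverse cross-sections to each heteroclinic, yields by Hypothesis~\ref{hyp:Heteroclinic}(2) a single scalar equation per junction, since $W^u$ and $W^s$ of the two saddles are one-dimensional and meet transversely or in a quadratic tangency. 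After quotienting by the shift equivariance of (\ref{Map}), this produces a system of $2k-1$ equations in $2k$ effective continuous unknowns $(s_1,\dots,s_{2k},\mu)$.

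To make the equations tractable I would pass to Shilnikov-type normal-form coordinates near $0$ and $u_*$, in which each dwell is a skew-product contraction of size $\mathcal{O}(\eta^{N_j})$ with $\eta\in(0,1)$ determined by the dominant hyperbolic eigenvalues. The $j$-th matching condition then reads schematically
\begin{equation*}
	s_{j+1} - s_j^\mathrm{match}(\mu) + R_j(s_1,\dots,s_{2k},\mu;\vec{N}) = 0,
\end{equation*}
with $\|R_j\|=\mathcal{O}(\eta^{N_0})$ uniformly in the continuous variables, where $N_0=\min_j N_j$ and $s_j^\mathrm{match}(\mu)$ is the arclength along the adjoining heteroclinic compatible with the previous piece. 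A single uniform implicit function theorem then produces, for every integer tuple $\vec N=(N_1,\dots,N_{2k-1})$ with $N_j\geq M_k$ and $M_k\gg 1$, a smooth locally unique $1$-parameter family of $k$-pulses.

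For the isola conclusion I would argue by compactness. The reduced solution set lies in the compact slab $(S^1)^{2k-1}\times J$; by Hypothesis~\ref{hyp:Heteroclinic}(1) it cannot reach the boundary $(S^1)^{2k-1}\times\partial J$, so it is a smooth compact $1$-manifold without boundary --- a disjoint union of circles. The fixed integer dwell times $N_j$ freeze the very degrees of freedom that, in the single-pulse case with $p\neq 0$, allow the lift in $\ell^\infty$ to drift by $p$ sites per traversal of $\bar\Gamma$ and thereby snake; here the fixed tuple $\vec N$ blocks the drift so each component closes into a bona fide isola in $\ell^\infty\times J$. Symmetric $k$-pulses are recovered by imposing $\mathcal{R}u_{-n}=u_n$ (on-site) or $\mathcal{R}u_{-n-1}=u_n$ (off-site), which forces $s_j=s_{2k+1-j}$ and $N_j=N_{2k-j}$ and halves the reduced system; the parity of $N_k$ distinguishes on-site from off-site centers of symmetry, giving statement (1).

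The main obstacle --- and the source of statement (2) --- is passage through the quadratic tangencies of Hypothesis~\ref{hyp:Heteroclinic}(2), where the implicit function theorem above degenerates into a fold. Near such a tangency on the symmetric branch the reduced problem is $\mathbb{Z}_2$-equivariant under $\mathcal{R}$, and a standard equivariant normal-form expansion produces a saddle-node on the symmetric curve together with a pitchfork whose symmetry-breaking coefficient is of size $\mathcal{O}(\eta^{N_0})$; this yields $|\mu_\mathrm{sn}-\mu_\mathrm{pf}|=\mathcal{O}(\eta^{N_0})$ and a pair of asymmetric branches emanating from the pitchfork with endpoints at pitchforks of opposite curvature, in analogy with the single-pulse theory of \cite{Bramburger}. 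Statement (3) follows by running the Lin reduction directly with an arbitrary asymmetric tuple $\vec N$: the resulting isola is disjoint from its $\mathcal{R}$-image, and the same transversality analysis forbids any bifurcations other than saddle-nodes. The principal technical effort throughout is the uniform control of the exponentially small remainders $R_j$ across all $2k-1$ junctions and through the tangencies, so that a single global implicit function theorem governs the entire bifurcation diagram.
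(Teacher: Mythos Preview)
Your overall strategy---Lin-type matching with Shilnikov coordinates near both saddles, a uniform implicit-function/root-finding argument for existence, and a $\mathbb{Z}_2$-equivariant normal-form expansion at the tangencies for the pitchforks---is essentially the paper's approach. The paper parameterizes by local deviations $(a^s_i,a^u_i,b^s_i,b^u_i)$ in the normal-form charts rather than by arclength along $\bar\Gamma$, and it packages the matching through a scalar function $G_*$ whose zeros cut out $W^u(0,\mu)$ in $\Sigma_{\mathrm{in}}$, but these are presentational differences only.

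The substantive gap is your isola argument. The sentence ``the fixed tuple $\vec N$ blocks the drift'' is a heuristic, not a proof, and it is misleading: in the single-pulse case one can equally speak of a fixed $N$, yet the lift snakes when $p\neq 0$ precisely because one traversal of $\bar\Gamma$ replaces $N$ by $N\pm p$. Your compactness argument in $(S^1)^{2k-1}\times J$ yields a circle in the \emph{quotient}, but does not by itself rule out nontrivial monodromy in the lift to $\ell^\infty\times J$. What actually forces closedness in the paper is a different observation: at each excursion near $u_*$ the entry datum $a^s$ and exit datum $a^u$ must lie on \emph{distinct} heteroclinic orbits in the quotient, i.e.\ $q(\bar v^s+a^s,\mu)\neq q(\bar v^s+a^u,\mu)$, because equality would combine with reversibility of the adjacent segment near $0$ to make the full orbit periodic rather than homoclinic. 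One is therefore tracking a pair of distinct points on $q(\Gamma_{\mathrm{loc}})$ at common $\mu$, and an additional genericity hypothesis (only one of the two can sit at a tangency for any given $\mu$) is needed to conclude, via \cite[Lemma~4.5]{Bramburger}, that such pairs form a closed curve already in $(a^s,a^u,\mu)$-space. You should replace the ``blocks the drift'' step with this distinctness argument and state the genericity condition explicitly.
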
 %End of theorem

We leave the proof of Theorem~\ref{thm:MainResult} to Section~\ref{sec:Proofs} where the reader can find an array of auxiliary results which help to clarify the notation in the statement of the theorem. Here we note that roughly the integers $N_1,N_2,\dots, N_{2k-1}$ describe the number of iterates the orbit spends in a neighbourhood of $0$ and $u_*$. More precisely, $N_j$ with $j$ odd describes the number of iterates spent close to $u_*$, whereas $N_j$ with $j$ even describes the number of iterates spent close to $0$ between iterates close to $u_*$. Figure~\ref{fig:kPulse} provides a visual description of this. 

\begin{figure} %Anatomy of a k-Pulse
\centering
\includegraphics[width=0.8\textwidth]{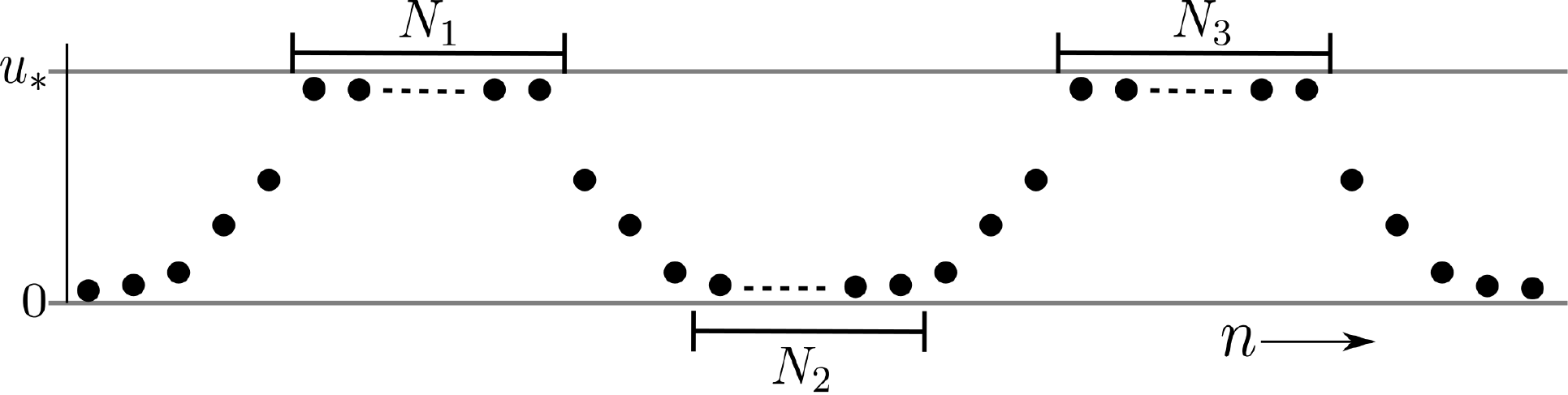}
\caption{A visual depiction of the results of Theorem~\ref{thm:MainResult}. We construct homoclinic orbits of (\ref{Map}) which for sufficiently large integers $N_1,N_2,\dots,N_{2k-1}$, the homoclinic orbit spends $N_1$ iterates in a neighbourhood of $u_*$, then jumps to a neighbourhood of $0$ for $N_2$ iterates, then back to a neighbourhood of $u_*$ for $N_3$ iterates, and so on for any finite sequence of sufficiently large $N_j$.}
\label{fig:kPulse}
\end{figure}

\section{Application to Lattice Dynamical Systems}\label{sec:LDS} %Section: LDSs --------------------------------------------------------------------------------------------------------------------------------- 

We now return to the discrete Nagumo system discussed in the introduction, given by
\begin{equation}\label{LDS}
	\dot{U}_n = d(U_{n+1} + U_{n-1} - 2U_n) + U_n(U_n - \mu)(1-U_n), \quad n\in\mathbb{Z}.
\end{equation}
Throughout this section we will demonstrate how our theoretical results in Section~\ref{sec:MainResults} can be applied to determine the existence and bifurcation structure of localized solutions with multiple disconnected regions of activation. The parameter $d > 0$ represents the strength of coupling between neighbouring elements indexed by the integer lattice $\mathbb{Z}$. We take $\mu \in [0,1]$ to be a bifurcation parameter, and note that in the interior of this parameter region we have exactly two stable spatially homogeneous steady-state solutions of (\ref{LDS}) given by $U_n = 0,1$ for all $n \in \mathbb{Z}$ and one unstable steady-state given by $U_n = \mu$ for all $n \in \mathbb{Z}$. At the endpoint $\mu = 0$ a transcritical bifurcation takes place when the equilibria $U_n = 0$ and $U_n = \mu$ collide, and similarly another transcritical bifurcation takes place at $\mu = 1$ when $U_n = \mu$ and $U_n = 1$ collide.    

As detailed in the introduction, searching for nontrivial steady-states of (\ref{LDS}) requires solving the infinite systems of equations 
\begin{equation}\label{LDS_Steady}
	0 = d(U_{n+1} + U_{n-1} - 2U_n) + U_n(U_n - \mu)(1-U_n), \quad n\in\mathbb{Z},	
\end{equation}
obtained by setting $\dot{U}_n = 0$ for all $n \in \mathbb{Z}$. As one can now see, system (\ref{LDS_Steady}) defines a delayed discrete dynamical systems in the spatial index $n$. We may introduce the change of variable $(u_n,v_n) = (U_{n-1},U_n)$ for all $n \in \mathbb{Z}$ to obtain the first-order mapping 
 \begin{equation}\label{LDS_Map}
 	\begin{split}
		u_{n+1} &= v_n, \\
		v_{n+1} &= 2v_n - u_n - \frac{1}{d}[v_n(v_n - \mu)(1-v_n)],
	\end{split}
 \end{equation}
which is a diffeomorphism of the form (\ref{Map}) studied in this work. Note that bounded solutions of (\ref{LDS_Map}) correspond to steady-state solutions of (\ref{LDS}), and most importantly, trajectories of (\ref{LDS_Map}) which are homoclinic to the trivial fixed point $(0,0)$ are localized steady-states of (\ref{LDS}). Additionally, the symmetry of the coupling terms in (\ref{LDS}) endows (\ref{LDS_Map}) with a reversible structure with reverser given in matrix form as
\[
	\mathcal{R} = \begin{bmatrix}
		0 & 1\\ 1 & 0
	\end{bmatrix},
\]	 
thus satisfying Hypothesis~\ref{hyp:Reverser}. The spatially independent steady-states of (\ref{LDS}) manifest themselves as fixed-points of (\ref{LDS_Map}) belonging to $\mathrm{Fix}(\mathcal{R})$. For any $d > 0$ the fixed points $(0,0)$ and $(1,1)$ are hyperbolic for all $\mu \in (0,1)$ and can be shown to satisfy Hypothesis~\ref{hyp:FixedPts} for any closed interval $J \subset (0,1)$.

%\subsection{Flat Plateaus}\label{subsec:Flat} %Subsection: Flat ---------------------------------------------------------------------------------------------------------------------------------

{\bf Verifying Hypothesis~\ref{hyp:Heteroclinic}.} Let us demonstrate an application of the results of Theorem~\ref{thm:MainResult} to the mapping (\ref{LDS_Map}). Throughout we will take $u_* = (1,1)$, and so the desired homoclinic orbits of (\ref{LDS_Map}) that spend a long time near the fixed point $(1,1)$ represent localized steady-state solutions of (\ref{LDS}) which resemble the spatially homogeneous steady-state $U_n = 1$ on some compact subset of the indices. We refer the reader to Figures~\ref{fig:Snaking} and \ref{fig:2Pulse} for characteristic examples of such solutions.  

First, it should be noted that confirming Hypothesis~\ref{hyp:Heteroclinic} can potentially be a difficult task when attempting to apply Theorem~\ref{thm:MainResult} to demonstrate the existence of localized solutions to a lattice dynamical system. In the context of (\ref{LDS}), extensive work has shown that steady-state front and back solutions connecting $U_n = 0$ and $U_n = 1$ asymptotically exist for all $d > 0$ in a symmetric parameter region centred about $\mu = 0.5$ and that this region becomes exponentially localized about $\mu = 0.5$ as $d \to \infty$ (see, for example, the review article \cite{Hupkes}). We reiterate that these front and back solutions manifest themselves as heteroclinic orbits of (\ref{LDS_Map}) which asymptotically connect the fixed points $(0,0)$ and $(1,1)$.

Aside from recalling previous studies, we may explicitly confirm Hypothesis~\ref{hyp:Heteroclinic} in the parameter region $0 < d \ll 1$ by perturbing off of the singular parameter value $d = 0$. Indeed, notice that setting $d = 0$ in (\ref{LDS_Steady}) completely decouples elements along the lattice and therefore we may define the singular back solutions $\bar{U}(\mu) = \{\bar{u}_n(\mu)\}_{n\in\mathbb{Z}}$ with
 \begin{equation} \label{HetSol1}
	\bar{U}_n(\mu) = \left\{
     		\begin{array}{cl} 0, & n \leq 0 \\ 
		1, & n > 0
		\end{array}
   	\right.
\end{equation}
and $\bar{V}(\mu) = \{\bar{V}_n(\mu)\}_{n\in\mathbb{Z}}$ with
 \begin{equation} \label{HetSol2}
	\bar{V}_n(\mu) = \left\{
     		\begin{array}{cl} 0, & n < 0 \\ 
		\mu, & n= 0 \\
		1, & n > 0
		\end{array}
   	\right..
\end{equation}
which are solutions of (\ref{LDS_Steady}) when $d = 0$. We note that by construction we have 
\begin{equation}\label{SingularConnections}
	\lim_{\mu\to0^+} \|\bar{U}(\mu) - \bar{V}(\mu)\|_\infty = 0, \quad \lim_{\mu\to0^+} \|S^{-1}\bar{U}(\mu) - \bar{V}(\mu)\|_\infty = 0,
\end{equation}
where $S:\ell^\infty \to \ell^\infty$ is the shift operator defined in (\ref{Shift}). Let us define 
\begin{equation}\label{Gamma0}
	\Gamma_0 := \bigcup_{p \in \mathbb{Z}}\bigcup_{\mu \in [0,1]} \{(S^p\bar{U}(\mu),\mu),(S^p\bar{V}(\mu),\mu)\} \subset \ell^\infty \times [0,1]
\end{equation}
and note that based upon (\ref{SingularConnections}) we have that $\Gamma_0$ is a connected curve. This leads to the following proposition which confirms Hypothesis~\ref{hyp:Heteroclinic} for the system (\ref{LDS}).

\begin{prop}\label{prop:Heteroclinic} %Proposition: Heteroclinic confirmation
	There exists $d_* > 0$ such that for all $0 < d < d_*$ there exists a smooth curve $\Gamma(d) \subset \ell^\infty \times (0,1)$ satisfying the following:
	\begin{enumerate}
		\item For each fixed $d \in (0,d_*)$ every element $(U,\mu) \in \Gamma(d)$ satisfies (\ref{LDS_Steady}) for the given value of $d > 0$ and $U = \{U_n\}_{n\in\mathbb{Z}}$ is such that $U_n \to 0$ as $n \to -\infty$ and $U_n \to 1$ as $n \to \infty$. 
		\item $\pi(\Gamma(d))$ is a closed loop for all $d \in (0,d_*)$.
		\item $\Gamma(d) \to \Gamma_0$ uniformly in the $\ell^\infty\times\mathbb{R}$ norm as $d \to 0^+$.	
	\end{enumerate}
\end{prop}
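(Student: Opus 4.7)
The plan is a perturbation argument from the anti-continuum limit $d=0$. Away from the degenerate endpoints $\mu\in\{0,1\}$, both singular families $\bar U(\mu)$ and $\bar V(\mu)$ persist under the implicit function theorem; at the endpoints, where $\bar U$ and $\bar V$ merge (modulo shift) and make the singular quotient curve $\pi(\Gamma_0)$ topologically a closed loop with corners, a local unfolding argument opens the corners into saddle-nodes in $\mu$ and closes $\pi(\Gamma(d))$ into a smooth loop.

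\textbf{Step 1: IFT in the interior.} I would write (\ref{LDS_Steady}) as $\Phi(U,\mu,d)=0$ for $\Phi$ acting smoothly on an affine subspace of $\ell^\infty(\mathbb{Z})$ of sequences with prescribed limits $0$ at $-\infty$ and $1$ at $+\infty$. At $d=0$ the linearization $D_U\Phi$ is a diagonal multiplication operator whose entries come from $\{-\mu,\,\mu-1,\,\mu(1-\mu)\}$, all uniformly bounded away from zero for $\mu$ in any closed subinterval $[\delta,1-\delta]\subset(0,1)$. The implicit function theorem then produces smooth two-parameter families $U(\mu,d)$ and $V(\mu,d)$ of heteroclinic steady states close to $\bar U$ and $\bar V(\mu)$ respectively, with $\|U(\mu,d)-\bar U\|_\infty$ and $\|V(\mu,d)-\bar V(\mu)\|_\infty$ of order $O(d)$ uniformly in $\mu\in[\delta,1-\delta]$.

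\textbf{Step 2: Unfolding the corners.} At $\mu=0$ the two singular solutions coincide, $\bar V(0)=\bar U$, and the fixed points $U=0$ and $U=\mu$ of the decoupled system undergo a transcritical collision; at $\mu=1$ the fixed points $U=\mu$ and $U=1$ collide and $\bar V(1)=S\bar U$. I would unfold each corner by a Lyapunov--Schmidt reduction on an appropriately weighted $\ell^\infty$ space to obtain a scalar equation $g(a,\mu,d)=0$ whose $d=0$ leading form is the transcritical $g(a,\mu,0)=a(a-\mu)+O((a,\mu)^3)$ (inherited from the cubic nonlinearity at the degenerating fixed point) and which depends nondegenerately on $d$. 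For small $d>0$ this transcritical unfolds as a saddle-node in $\mu$ at some $\mu_\ell(d)\to 0$ (respectively $\mu_r(d)\to 1$), at which the two local branches of heteroclinic solutions match smoothly with $U(\cdot,d)$ and $V(\cdot,d)$ from Step 1. Concatenating the four pieces yields a smooth connected curve $\Gamma(d)\subset\ell^\infty\times(\mu_\ell(d),\mu_r(d))\subset\ell^\infty\times(0,1)$ whose quotient $\pi(\Gamma(d))$ is a simple closed loop, and uniform IFT bounds together with the scaling of the Lyapunov--Schmidt analysis give $\Gamma(d)\to\Gamma_0$ as $d\to 0^+$.

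The main obstacle will be the Lyapunov--Schmidt step at the corners. The linearization $D_U\Phi$ at the singular point $(\bar U,0,0)$ has an infinite-dimensional kernel, since every index on the side of the lattice where $\bar U\equiv 0$ contributes a vanishing diagonal entry at $\mu=0$, and therefore a standard finite-dimensional reduction does not apply at $d=0$. I would expect to handle this by treating $d\Delta$ as a regularizing perturbation that restores Fredholmness on an exponentially weighted space (with weight tied to the vanishing eigenvalue $\sqrt{\mu/d}$ of the linearization at the collapsing fixed point) and reduces the effective kernel to a single slow tail mode. Verifying the nondegeneracy of both the quadratic coefficient in $a$ and the first-order $d$-dependence of the reduced equation $g$---which is what turns the singular corner of $\pi(\Gamma_0)$ into a genuine saddle-node and prevents $U(\cdot,d)$ and $V(\cdot,d)$ from remaining disjoint arcs---is where the real work lies.
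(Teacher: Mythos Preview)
Your proposal is correct and follows essentially the same strategy as the paper's outline: implicit-function-theorem continuation of $\bar U(\mu)$ and $\bar V(\mu)$ on compact subintervals of $(0,1)$, followed by a Lyapunov--Schmidt analysis near $\mu=0,1$ showing that the singular transcritical bifurcations unfold into saddle-nodes for $0<d\ll1$, after which shift-equivariance closes the quotient loop. The only difference is that the paper invokes ``blow-up techniques'' (desingularizing rescalings) rather than your exponentially-weighted spaces to handle the infinite-dimensional kernel at the corners; these are closely related devices for the same degeneracy, and you have correctly flagged the nondegeneracy of the $d$-dependence of the reduced scalar equation as the substantive computation.
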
 %End of proposition

\begin{proof}
	This proof is carried out the same way as \cite[Proposition~5.1]{Bramburger} and therefore we only outline the steps needed to complete the proof. First, we note that since the roots $\{0,\mu,1\}$ of the nonlinearity $u(u - \mu)(1-u)$ are non-degenerate for all $\mu \in (0,1)$, it follows that the solutions $\bar{U}(\mu)$ and $\bar{V}(\mu)$ can be continued regularly in $0 < d \ll 1$ in any compact subinterval of the interval $(0,1)$ via the implicit function theorem. Therefore, we need only understand how the solutions $\bar{U}(\mu)$ and $\bar{V}(\mu)$ continue near the bifurcation points $\mu = 0,1$. This process can be undertaken using Lyapunov-Schmidt reduction and blow-up techniques in neighbourhoods of $\mu = 0$ and $\mu = 1$ to show that the transcritical bifurcations present when $d = 0$ degenerate into saddle-node bifurcations occurring at values of $\mu$ in the interior of the parameter interval $[0,1]$ when $0 < d \ll 1$. This then gives a smooth curve connecting the continuations of $\bar{U}(\mu)$, $\bar{V}(\mu)$, and $S^{-1}\bar{U}(\mu)$ for $0 < d \ll 1$. We then exploit the equivariance of (\ref{LDS_Steady}) with respect to the shift operator $S$ to obtain the full unbounded curve $\Gamma(d)$. The shift equivariance also gives that $\pi(\Gamma(d))$ is a closed loop.   
\end{proof} %End of proof

\begin{figure} %Figure: Asymmetric bifurcations from 2Pulse isola
\centering
\includegraphics[width=0.9\textwidth]{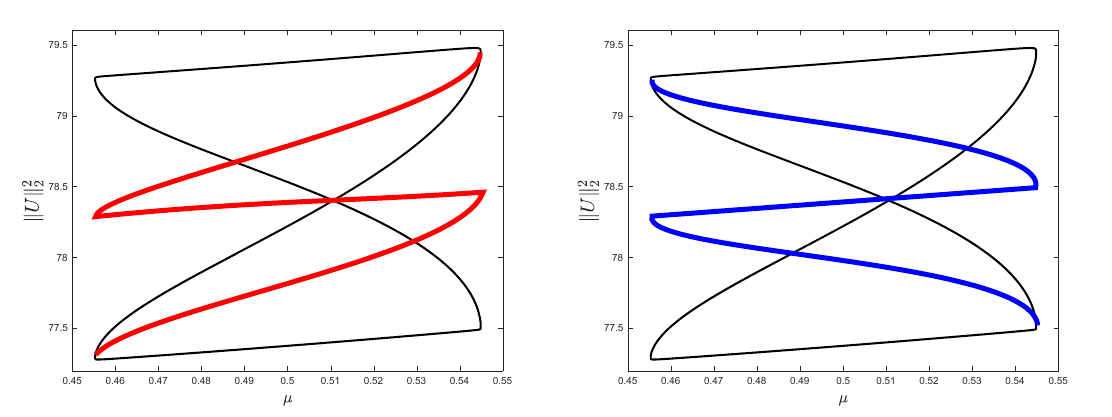}
\caption{Branches of asymmetric $2$-pulses which bifurcate from the curve of symmetric $2$-pulse presented in Figure~\ref{fig:2Pulse}. Both curves originate and terminate at pitchfork bifurcations which take place exponentially close to the saddle-node bifurcations of opposite curvature along the symmetric $2$-pulse curve.}
\label{fig:2Pulse_Pitchfork}
\end{figure} 

{\bf Isolas of Multipulses.} Following the discussion proceeding Hypothesis~\ref{hyp:Heteroclinic}, we find that since for each fixed $0 < d \ll 1$ the set $\Gamma(d)$ is not a closed loop, single-pulse solutions of (\ref{LDS}) snake, as demonstrated in Figure~\ref{fig:Snaking}. Our work in this manuscript shows that regardless of the bifurcation structure of single-pulse solutions, all multi-pulse solutions of (\ref{LDS}) lie along closed curves in $\ell^\infty \times [0,1]$. This is illustrated in Figure~\ref{fig:2Pulse} where an isola of symmetric $2$-pulse solutions of (\ref{LDS}) is provided. Moreover, since the bifurcation curve in Figure~\ref{fig:2Pulse} represents symmetric solutions, Theorem~\ref{thm:MainResult}(2) dictates that near each of the four saddle-node bifurcations we expect a symmetry-breaking pitchfork bifurcation to occur. The continued symmetry-breaking curves are presented in Figure~\ref{fig:2Pulse_Pitchfork} where we can see that they form distinctive `zig-zag' patterns connecting saddle-nodes at opposite extremities of the symmetric isola. This again is consistent with the results of Theorem~\ref{thm:MainResult} since we see that the pitchfork bifurcations which mark the endpoints of the asymmetric curves take place at saddle-node bifurcations of opposite curvature along the branch. One minor shortcoming of Theorem~\ref{thm:MainResult} is that it fails to provide explicit information as to where exactly a bifurcating asymmetric branch will terminate, but we direct the reader to the work of \cite[Section~5.2]{Beck} where the authors provide methods of visually and analytically continuing such branches.      

\begin{figure} %Figure: Asymmetric 2-Pulses
\centering
\includegraphics[width=\textwidth]{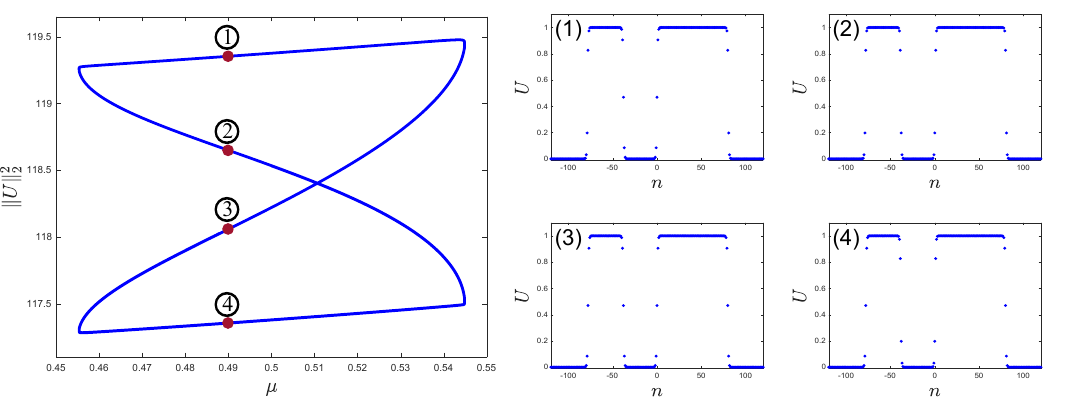}
\caption{An isola of asymmetric $2$-pulse solutions to (\ref{LDS}) with $d = 0.1$. In the context of Theorem~\ref{thm:MainResult} these solutions represent a homoclinic orbit of (\ref{LDS_Map}) with $N_1 < N_3$. The asymmetry of the $2$-pulses give that no pitchfork bifurcations take place anywhere along the closed bifurcation curve. Sample profiles are provided for $\mu = 0.49$ along the bifurcation curve.}
\label{fig:2Pulse_Asym}
\end{figure} 

Theorem~\ref{thm:MainResult} of course goes far beyond symmetric $2$-pulses to provide both the existence and bifurcation structure of a denumerable number of multi-pulse solutions of (\ref{LDS}). Figures~\ref{fig:2Pulse_Asym} and \ref{fig:5Pulse} provide further numerical confirmation of the results in this manuscript. Figure~\ref{fig:2Pulse_Asym} presents an isola of asymmetric $2$-pulses which have $N_1 < N_3$, using the notation of Theorem~\ref{thm:MainResult}. We note that since these $2$-pulses are asymmetric, it is guaranteed that their bifurcation curve contains no symmetry-breaking bifurcation branches and hence the curve given in Figure~\ref{fig:2Pulse_Asym} is the entire connected bifurcation curve of the associated asymmetric $2$-pulses. To move beyond $2$-pulses, Figure~\ref{fig:5Pulse} presents two more bifurcation curves: one for asymmetric $4$-pulses and one for symmetric $5$-pulses. Interestingly, these bifurcation curves bear little resemblance to the hourglass shape observed in our numerics for $2$-pulses, but is again nonetheless an isola. An interesting avenue for future exploration would be to determine the mechanism that dictates the shape of the resulting isola.    

\begin{figure} %Figure: Asymmetric 4-Pulses and Symmetric 5-Pulses
\centering
\includegraphics[width=\textwidth]{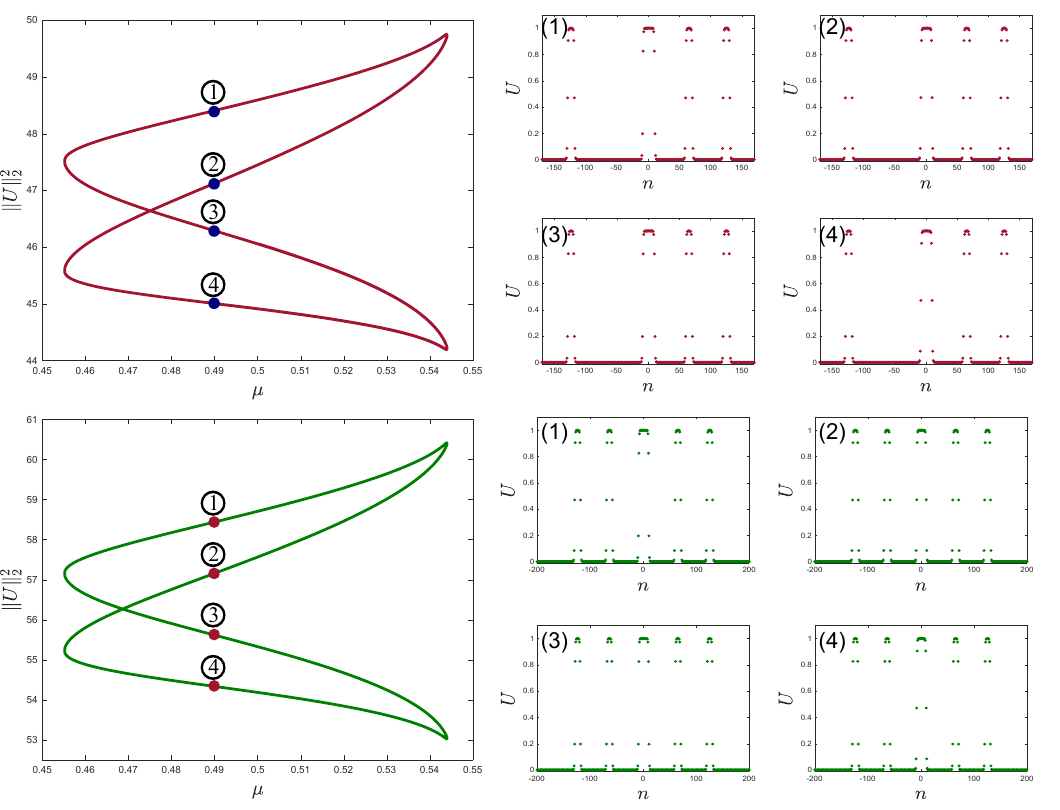}
\caption{An isola of asymmetric $4$-pulse (red) and symmetric $5$-pulse (green) solutions to (\ref{LDS}) with $d = 0.1$. Sample profiles are provided at $\mu = 0.49$ along the bifurcation curve.}
\label{fig:5Pulse}
\end{figure}

%\subsection{Stability}\label{subsec:Stability} %Subsection: Stability ---------------------------------------------------------------------------------------------------------------------------------

\begin{figure} %Figure: Stability
\centering
\includegraphics[width=\textwidth]{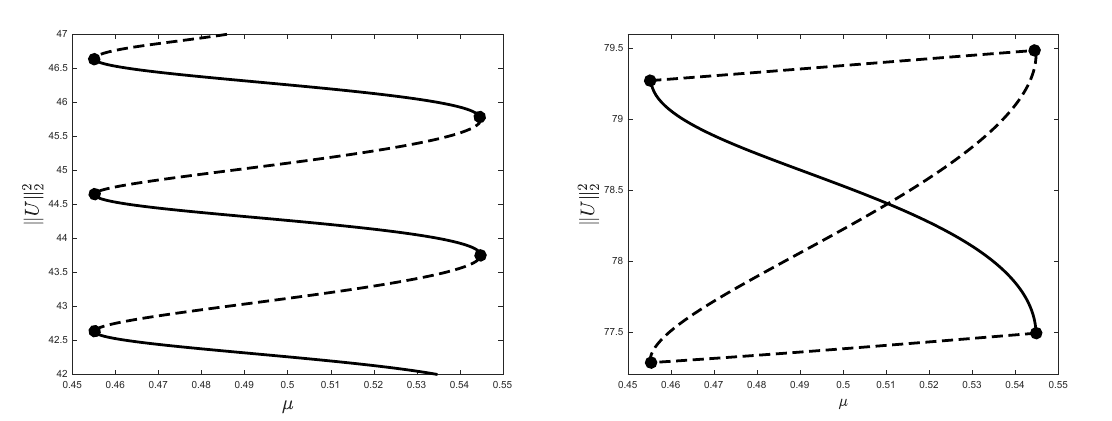}
\caption{Numerical calculation of spectral stability along the bifurcation curves of single and $2$-pulse solutions of (\ref{LDS}). Solid lines represent spectrally stable solutions, dashed lines are unstable solutions, and large dots represent the saddle-node bifurcations where stability changes along the branch. On the left we present the isolated bifurcation curve of on-site single-pulses from Figure~\ref{fig:Snaking} and on the right we present the bifurcation curve of the symmetric $2$-pulses from Figure~\ref{fig:2Pulse}.}
\label{fig:Stability}
\end{figure} 

{\bf Stability.} Let us now briefly discuss the expected stability of the multi-pulse solutions described in this manuscript. First, recent results in the continuous spatial setting have shown that the stability of single-pulse solutions can be derived from the front and back solutions used to demonstrate their existence \cite{Makrides}. This work uses gluing arguments to show that single-pulse solutions are exponentially close to a front and back solution glued together. This exponential closeness endowed by the method of gluing then is used to show that isolated eigenvalues associated to the linearization about a front or back solution used to create the localized solution lead to isolated eigenvalues of the associated localized solution. An immediate consequence of this fact is that if a localized solution is created by gluing an unstable front solution to a back solution, or vice-versa, then it must also be unstable as well. We leave the full analytical extension of the results of \cite{Makrides} to a future exposition and only briefly comment on their expected implications here.

Linearizing (\ref{LDS}) about the solutions $\bar{U}(\mu)$ and $\bar{V}(\mu)$ at $d = 0$ trivially gives that for all $\mu \in (0,1)$ the solutions $\bar{U}(\mu)$ have spectrum entirely contained in the negative real numbers while the solutions $\bar{V}(\mu)$ have exactly one positive real eigenvalue and all others belonging in the negative reals. Hence, traversing the curve $\Gamma_0$ from (\ref{Gamma0}) gives a curve of steady-state solutions of (\ref{LDS}) at $d = 0$ with a single real eigenvalue drifting back and forth across $0$. The results of Proposition~\ref{prop:Heteroclinic} and the boundedness of the coupling function in (\ref{LDS}) can be used to infer that the same happens as one traverses the curves $\Gamma(d)$ for all $0 < d \ll 1$. Hence, curve $\Gamma(d)$ is composed of spectrally stable and unstable back solutions of (\ref{LDS}) for $0 < d \ll 1$ which meet at saddle-node bifurcations near $\mu = 0$ and $\mu = 1$. Of course, reflecting elements of $\Gamma(d)$ over the index $n = 0$ gives the exact same behaviour for steady-state front solutions of (\ref{LDS}). This symmetry is exactly what gives the reversibility of (\ref{LDS_Map}) and is exploited in Section~\ref{sec:Proofs} to prove Theorem~\ref{thm:MainResult}.     

The process of gluing a back solution and its associated front solution obtained by reflection over $n = 0$ is exactly how symmetric single-pulse solutions of (\ref{LDS}) can be constructed. Hence, using the work of \cite{Makrides} as a guide, one expects that ascending the snaking curve of symmetric single pules provided in Figure~\ref{fig:Snaking} results in alternating branches of spectrally stable and unstable solutions which collide at the left and right saddle-node bifurcations. This is confirmed numerically for on-site solutions in Figure~\ref{fig:Stability}. Furthermore, the gluing process results in the unstable single-pulses having exactly two unstable eigenvalues: one from the front solution and one from the back solution. As one ascends the bifurcation curve both eigenvalues cross zero, resulting in two distinct steady-state bifurcations: the saddle-node along the symmetric branch and the pitchfork to the pair of asymmetric single-pulses. The asymmetric single-pulses can be viewed as gluing a back and front solution which do not reflect into each other (up to discrete translation along the lattice), which in this scenario means one of the front or back must be unstable while the other is spectrally stable. Hence, asymmetric single-pulse solutions of (\ref{LDS}) should be expected to be unstable and numerics appear to confirm this. 

It was shown in \cite{2Pulse} that $2$-pulse solutions in the continuous spatial setting can be viewed as gluing together two single-pulse solutions and although the work \cite{Makrides} only covers stability of single-pulse solutions, its methods should be applicable to multi-pulses as well. Furthermore, the process of determining the stability of multi-pulses from associated single-pulses is handled in \cite{Sandstede} and hence can act as a guide for our discussion here. In the present context of (\ref{LDS}), the creation of a $2$-pulse from single-pulses can be observed in Figure~\ref{fig:2Pulse} where it appears that the symmetric $2$-pulse solutions are composed of two single-pulses with one reflected and glued to the other. Hence, stability of the associated $2$-pulses should follow from the stability of single-pulse solutions of (\ref{LDS}). More precisely, it should hold that the top and bottom pieces of the bifurcation curve in Figure~\ref{fig:2Pulse} are unstable with two positive real eigenvalues since they are formed from asymmetric single-pulses which have a single positive real eigenvalue each. One of the branches that connect the top and the bottom of the bifurcation curve should be stable since it is composed of $2$-pulses which are formed from stable single-pulses, while the other should be composed of unstable $2$-pulses for similar reasons. This is confirmed numerically in Figure~\ref{fig:Stability}. Similarly, tracking the number of eigenvalues that cross the imaginary axis as one traverses each component of the bifurcation curve leads to an intuitive understanding of why the pitchfork bifurcations described in Theorem~\ref{thm:MainResult} happen so close to the saddle-node bifurcations. For similar reasons to the single-pulse case, we expect that all bifurcating asymmetric $2$-pulses are unstable.     

With the intuition we have built up in the previous paragraphs, it now becomes a straightforward mental exercise of determining which multi-pulse solutions to (\ref{LDS}) are expected to be stable and which are expected to be unstable. It is worth mentioning that one may be led to conjecture that all asymmetric multi-pulses should be expected to be unstable, but this should not be the case. That is, one may have an asymmetric $2$-pulse solution which is formed by gluing a spectrally stable single-pulse to another spectrally stable single-pulse with a significantly longer plateau. This would be the process used to create the solutions in Figure~\ref{fig:2Pulse_Asym} and numerical investigations reveal that the stability along this bifurcation curve should be the same as that which is presented on the right of Figure~\ref{fig:Stability} for symmetric $2$-pulses.   

This discussion of stability is entirely formal and was partially backed up by the numerics presented in Figure~\ref{fig:Stability}. A full analytical treatment of the stability of localized patterns in lattice dynamical systems will be left to a subsequent study. As a final note, spectral stability is all that is required to conclude local asymptotic stability of a localized solution to (\ref{LDS}). The reason for this is that the linearization about a localized solution to (\ref{LDS}) results in a bounded operator, and hence standard theory gives that the semi-group generated by such a bounded linear operator with spectrum lying entirely to the left of the imaginary axis decays exponentially in time. This uniform exponential decay of the semi-group can then be extended to small perturbations from the associated localized steady-state of the lattice dynamical system via standard arguments. Hence, determining spectral stability of localized solutions of (\ref{LDS}) guarantees local asymptotic stability of the solution as well.

\section{Proofs}\label{sec:Proofs}%Section: Proofs ---------------------------------------------------------------------------------------------------------------------------------

Throughout this section we provide the proof of Theorem~\ref{thm:MainResult} by breaking it down into a series of smaller results. In \S~\ref{sec:u_*} we review some results form \cite{Bramburger} that transform the dynamics of (\ref{Map}) in a neighbourhood of the fixed point $u_*$ to better describe the local dynamics. We then extend these coordinate transformations to a neighbourhood of the trivial fixed point in \S~\ref{sec:0} via the same methods as the previous subsection. Due to the similar nature of the results between \S~\ref{sec:u_*} and \S~\ref{sec:0}, throughout this section constants with a $*$ subscript correspond to results in the neighbourhood of the fixed point $u = u_*$ and constants with a $0$ subscript correspond to results in the neighbourhood of the fixed point $u = 0$. The existence and bifurcation structure of symmetric  $2$-pulses are left to \S~\ref{subsec:Sym2}, whereas the asymmetric $2$-pulses are handled in \S~\ref{subsec:Asym2}. Then \S~\ref{subsec:kPulse} extends the results for $2$-pulses to $k$-pulses for arbitrary $k \geq 3$.

\subsection{Local Coordinates About $u_*$}\label{sec:u_*} %Section: u_* ----------------------------------------------------------------------------------------------------------------------------------------------------------

In this section we characterize the dynamics near the fixed point $u_*$ by recalling the work of \cite{Bramburger}. We begin by noting that Hypotheses~\ref{hyp:Reverser}-\ref{hyp:FixedPts} imply that the eigenvalues of $F_u(u_*,\mu)$ are of the form $0 < \lambda_*(\mu)^{-1} < 1 < \lambda_*(\mu)$ for some smooth function $\lambda_*(\mu)$. The following result was proved in \cite{Bramburger} and uses normal hyperbolicity of $u_*$ for all $\mu \in J$ to provide a near-identity change of coordinates to characterize the local dynamics in a simpler way.  

\begin{lem}[\cite{Bramburger}]\label{lem:Shilnikov*} %Lemma: Shilnikov* Variables
	Assume Hypotheses~\ref{hyp:Reverser} and \ref{hyp:FixedPts} are met. Then, there exists $\delta_* > 0$, a smooth change of coordinates mapping $u$ to $v = (v^s,v^u)$ near the fixed point $u = u_*$, and smooth functions $f^s_i,f^u_i:\mathcal{I}_*\times\mathcal{I}_*\times J\to \mathbb{R}$, $i = 1,2$, so that (\ref{Map}) is of the form 
	\begin{equation}\label{Shil*}
		\begin{split}
			v^s_{n+1} &= [\lambda_*(\mu)^{-1} + f_1^s(v^s,v^u,\mu)v^s_n + f_2^s(v^s,v^u,\mu)v^u_n]v^s_n, \\
			v^u_{n+1} &= [\lambda_*(\mu) + f_1^u(v^s,v^u,\mu)v^s_n + f_2^u(v^s,v^u,\mu)v^u_n]v^u_n, \\
		\end{split}
	\end{equation}	
	for all $\mu \in J$, where $v^s_n,v^u_n \in \mathcal{I}_* := [-\delta_*,\delta_*]$, and the reverser $\mathcal{R}$ acts by 
	\begin{equation}\label{Shil*Reverser}
		\mathcal{R}(v^s,v^u) = (v^u,v^s).
	\end{equation}
\end{lem}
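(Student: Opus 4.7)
The plan is to reduce (\ref{Map}) to (\ref{Shil*}) by a succession of smooth, $\mathcal{R}$-equivariant changes of coordinates that diagonalize the linearization at $u_*$, straighten the local invariant manifolds, and then factor out the trivial vanishing of each coordinate on its associated axis. First, by Hypothesis~\ref{hyp:FixedPts} the eigenvalues of $F_u(u_*,\mu)$ are real, positive, and of the form $0 < \lambda_*(\mu)^{-1} < 1 < \lambda_*(\mu)$, with smooth $\mu$-dependence by the implicit function theorem. Diagonalize $F_u(u_*,\mu)$ smoothly in $\mu$, with the stable eigendirection aligned with the first coordinate ($v^s$) and the unstable with the second ($v^u$). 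Differentiating the reversibility identity $F^{-1}(u,\mu) = \mathcal{R}F(\mathcal{R}u,\mu)$ at $u_*$ yields $DF(u_*,\mu)^{-1} = \mathcal{R}\,DF(u_*,\mu)\,\mathcal{R}$, which forces $\mathcal{R}$ to exchange the eigenspaces for $\lambda_*$ and $\lambda_*^{-1}$; by an appropriate rescaling of the eigenvectors, one arranges that in these coordinates $\mathcal{R}$ acts by the swap $(v^s,v^u) \mapsto (v^u,v^s)$ while the linear part of $F$ is $\mathrm{diag}(\lambda_*^{-1},\lambda_*)$.

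Second, straighten the local invariant manifolds while preserving the swap. By the stable manifold theorem with smooth parameter dependence, $W^s(u_*,\mu)$ is locally a graph $v^u = \phi(v^s,\mu)$ with $\phi(0,\mu) = \partial_{v^s}\phi(0,\mu) = 0$. By (\ref{RevMans}) and the swap action of $\mathcal{R}$, $W^u(u_*,\mu)$ is the graph $v^s = \phi(v^u,\mu)$ with the \emph{same} function $\phi$. The change of coordinates
\begin{equation*}
    \tilde v^s = v^s - \phi(v^u,\mu), \qquad \tilde v^u = v^u - \phi(v^s,\mu)
\end{equation*}
is near-identity, smooth in $\mu$, and commutes with the swap, so it preserves both the reverser form and the diagonal linear part. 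In the new coordinates $W^s(u_*,\mu) = \{\tilde v^u = 0\}$ and $W^u(u_*,\mu) = \{\tilde v^s = 0\}$ are invariant coordinate axes.

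Third, factor the map. Writing the new map componentwise and using that $\{v^u = 0\}$ is invariant, $v^u_{n+1}$ vanishes whenever $v^u_n = 0$; a Hadamard-type integration in the $v^u$ variable produces a smooth function $B(v^s,v^u,\mu)$ with $v^u_{n+1} = B(v^s_n,v^u_n,\mu)\,v^u_n$ and $B(0,0,\mu) = \lambda_*(\mu)$. An analogous factorization gives $v^s_{n+1} = A(v^s_n,v^u_n,\mu)\,v^s_n$ with $A(0,0,\mu) = \lambda_*(\mu)^{-1}$. Applying the same integral identity once more to the smooth remainders $A - \lambda_*^{-1}$ and $B - \lambda_*$, which vanish at the origin, expresses them as linear combinations of $v^s$ and $v^u$ with smooth coefficients, producing the $f^s_i,f^u_i$ of (\ref{Shil*}) on a box $\mathcal{I}_* \times \mathcal{I}_*$ with $\delta_* > 0$ chosen uniformly on the compact set $J$. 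Since each coordinate change commuted with the swap, the reverser still acts by (\ref{Shil*Reverser}).

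The main obstacle is maintaining $\mathcal{R}$-equivariance at every step so that the reverser continues to act as the simple swap. The crucial enabling observation is that once $\mathcal{R}$ is the linear swap, the reversibility identity (\ref{RevMans}) identifies the unstable manifold as the swap-image of the stable manifold, so a single graph $\phi$ parameterizes both and the straightening coordinate change can be chosen symmetrically in $v^s$ and $v^u$. Everything else reduces to parameter-smooth versions of standard invariant-manifold and Hadamard factorization arguments.
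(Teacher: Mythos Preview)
The paper does not actually prove this lemma: it is quoted verbatim from \cite{Bramburger} and stated without proof, so there is no ``paper's own proof'' to compare against. Your argument is the standard Shilnikov-type construction (diagonalize, arrange $\mathcal{R}$ as the coordinate swap, straighten the local stable and unstable manifolds equivariantly, then factor via Hadamard's lemma) and it is correct; this is essentially the approach one would expect the cited reference to take as well.
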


We can further characterize solutions of (\ref{Shil*}) using the following lemma. 

\begin{lem}[\cite{Bramburger}]\label{lem:Shil*Sol} %Lemma: Shilnikov* Solution
	There exists constants $\eta_* \in (0,1)$ and $M_* > 0$ such that the following is true: for each $N > 0$, $a^u,a^s\in\mathcal{I}_*$, and $\mu \in J$ there exists a unique solution near the origin to (\ref{Shil*}), written $v_n = (v^s_n,v^u_n) \in \mathcal{I}_*\times\mathcal{I}_*$ with $n \in \{0,\dots,N\}$, such that
	\[
		v^s_0 = a^s, \quad v^u_N = a^u.
	\]
	Furthermore, this solution satisfies
	\begin{equation}\label{Shil*Bnds}
		|v^s_n| \leq M_*\eta_*^n, \quad |v^u_n| \leq M_*\eta_*^{N-n},
	\end{equation}
	for all $n \in \{0,\dots,N\}$, $v_n = v_n(a^s,a^u,\mu)$ depends smoothly on $(a^s,a^u,\mu)$, and the bounds (\ref{Shil*Bnds}) also hold for the derivatives of $v$ with respect to $(a^s,a^u,\mu)$. Moreover, 
	\[
		\mathcal{R}(v^s_n,v^u_n) = (v^u_{N-n},v^s_{N-n}),
	\]
	for all $n \in \{0,\dots,N\}$. In particular, the solution $v$ is symmetric if, and only if, $a^s = a^u$.
\end{lem}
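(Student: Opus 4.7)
The plan is to solve the boundary value problem for (\ref{Shil*}) as a fixed-point problem obtained from the discrete variation-of-constants formula, and then invoke a uniform contraction mapping argument. Writing the nonlinearities as $N^s(v,\mu) := [f_1^s v^s + f_2^s v^u]v^s$ and $N^u(v,\mu) := [f_1^u v^s + f_2^u v^u]v^u$, each vanishes quadratically at the origin. Because the linearization decouples into a stable direction contracting by $\lambda_*^{-1}$ and an unstable direction expanding by $\lambda_*$, I would solve the stable component forward from $n = 0$ with data $v^s_0 = a^s$ and the unstable component backward from $n = N$ with data $v^u_N = a^u$, arriving at the equivalent fixed-point equation $v = T(v)$ with
\begin{align*}
T^s(v)_n &= \lambda_*^{-n} a^s + \sum_{k=0}^{n-1} \lambda_*^{k+1-n}\, N^s(v_k,\mu), \\
T^u(v)_n &= \lambda_*^{n-N} a^u - \sum_{k=n}^{N-1} \lambda_*^{n-k-1}\, N^u(v_k,\mu).
\end{align*}

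Next, I would fix any $\eta_* \in (\lambda_*^{-1},1)$ and work on the Banach space $\mathcal{X}_N$ of sequences $(v^s_n,v^u_n)_{n=0}^N$ equipped with the weighted norm
\[
\|v\|_{\eta_*} := \max\Bigl\{\sup_{0\leq n\leq N}\eta_*^{-n}|v^s_n|,\ \sup_{0\leq n\leq N}\eta_*^{-(N-n)}|v^u_n|\Bigr\}.
\]
Because $\lambda_*\eta_* > 1$, the geometric sums appearing after bounding the kernels of $T$ satisfy $\sum_{k=0}^{n-1}\lambda_*^{k+1-n}\eta_*^k \leq C\eta_*^n$ and $\sum_{k=n}^{N-1}\lambda_*^{n-k-1}\eta_*^{N-k}\leq C\eta_*^{N-n}$, uniformly in $N$. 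Combining this with the quadratic vanishing of $N^{s,u}$ yields, in a straightforward manner, that for $\delta_*$ small enough $T$ maps the closed ball of radius $M_* = C(|a^s|+|a^u|)$ in $\mathcal{X}_N$ into itself and is a contraction with rate independent of $N$, $a^{s,u}$, and $\mu \in J$. Banach's fixed-point theorem produces a unique solution $v = v(a^s,a^u,\mu)$, and the estimate $\|v\|_{\eta_*} \leq M_*$ is precisely (\ref{Shil*Bnds}).

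Smoothness in $(a^s,a^u,\mu)$ with bounds of the same weighted form is standard for a uniform contraction depending smoothly on parameters: formally differentiating the fixed-point equation gives a new, linear contraction on $\mathcal{X}_N$ whose unique fixed point is the derivative of $v$, and induction on the order of differentiation together with the same geometric-sum estimates propagates (\ref{Shil*Bnds}) to all derivatives. For the reversibility statement I would use Hypothesis~\ref{hyp:Reverser} together with the form (\ref{Shil*Reverser}) of $\mathcal{R}$ to check that $\tilde v_n := \mathcal{R}v_{N-n} = (v^u_{N-n},v^s_{N-n})$ again solves (\ref{Shil*}), now with boundary data $(\tilde a^s,\tilde a^u) = (a^u,a^s)$; uniqueness then yields $v(a^u,a^s,\mu)_n = \mathcal{R}v(a^s,a^u,\mu)_{N-n}$, from which the symmetric-iff-$a^s=a^u$ characterization is immediate.

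I expect the main obstacle to be making the contraction estimate uniform in $N$. Choosing the weight $\eta_*$ strictly greater than $\lambda_*^{-1}$ is what makes the geometric sums in the Duhamel kernels bounded independently of $N$, while the fact that each of $N^{s,u}$ carries an explicit factor of $v^{s,u}$ is what provides the smallness needed to close the estimate against $\delta_*$; balancing these two features (and not losing it when differentiating in parameters) is the delicate part. All remaining ingredients are routine.
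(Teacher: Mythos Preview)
Your argument is correct and is precisely the standard Shilnikov boundary-value-problem approach: recast the system as a fixed-point equation via discrete variation of constants, solve the stable component forward and the unstable component backward, and close a uniform contraction on an exponentially weighted sequence space. The reversibility and smoothness arguments are also handled correctly.

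Note, however, that the present paper does not actually prove this lemma: it is quoted verbatim from \cite{Bramburger} and simply cited. So there is no ``paper's own proof'' to compare against here; the proof lives in the predecessor paper, and what you have sketched is exactly the argument one finds there (and, more generally, in the Shilnikov-variable literature). Two small points worth tightening if you write this out in full: (i) the weight $\eta_*$ must be chosen uniformly in $\mu\in J$, i.e.\ $\eta_* \in \bigl(\sup_{\mu\in J}\lambda_*(\mu)^{-1},\,1\bigr)$, which is possible by compactness of $J$ and the hyperbolicity in Hypothesis~\ref{hyp:FixedPts}; and (ii) your $M_* = C(|a^s|+|a^u|)$ depends on the data, whereas the lemma asserts a single constant $M_*$---this is harmless since $a^s,a^u\in\mathcal{I}_*$ gives $|a^s|+|a^u|\leq 2\delta_*$, but it is worth stating explicitly.
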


Now, notice that the positivity of the eigenvalues of $DF_u(u_*,\mu)$ for all $\mu \in J$ assumed in Hypothesis~\ref{hyp:FixedPts} implies that the stable and unstable manifolds of the fixed point $u_*$ are orientation preserving. In \cite{Bramburger} this fact was used to construct an interval $K_0 := [\delta_L,\delta_R]$, with $\delta_L,\delta_R \in (0,\delta)$, such that for all $\mu \in J$ we have the following:
\begin{enumerate}
	\item The backward iteration (\ref{InverseMap}), $F^{-1}$, maps the point $(v^s,v^u) = (\delta_L,0)$ into the interval $(\delta_L,\delta_R)\times\{v^u = 0\}$.
	\item The forward iteration (\ref{Map}), $F$, maps the point $(v^s,v^u) = (\delta_R,0)$ into the interval $(\delta_L,\delta_R)\times\{v^u = 0\}$.
	\item The backward iteration (\ref{InverseMap}), $F^{-1}$, maps the point $(v^s,v^u) = (\delta_R,0)$ out of the set $\mathcal{I}\times\mathcal{I}$.  
\end{enumerate}
Furthermore, consider a closed interval $K_1$ with the property that $K_0 \subset K_1 \Subset (0,\delta)$. This allows for the definition of the segment
\[
	\Sigma_\mathrm{in} := K_1 \times \mathcal{I}.
\]
Applying the reverser $\mathcal{R}$ and using the action (\ref{Shil*Reverser}) allows one to further define 
\[
	\Sigma_\mathrm{out} := \mathcal{I} \times K_1.
\]

It is a straightforward task to show that the choices of $\delta_L,\delta_R$ are sufficient to show that for each fixed $\mu \in J$ we have 
\[
	W^u(0,\mu) \cap \{(v^s,0)\in\mathcal{I}\times\mathcal{I}:\ v^s \in (0,\delta)\} \neq \emptyset
\]
if, and only if, 
\[
	W^u(0,\mu) \cap \{(v^s,0)\in\mathcal{I}\times\mathcal{I}:\ v^s \in K_0\} \neq \emptyset.
\]
We illustrate these facts in Figure~\ref{fig:Local_u*}, and tracing these intersections for varying $\mu$ allows one to define the local component of $\Gamma$ to $K_0$, denoted $\Gamma_\mathrm{loc}$, given by
\[
	\Gamma_\mathrm{loc} := \bigcup_{\mu \in J} (W^u(0,\mu) \cap \{(v^s,0):v^s\in K_0\}) \subset K_0 \times \{v^u = 0\}\times \mathring{J}.	
\]
We note that $\Gamma_\mathrm{loc}$ is closed and nonempty and represents the piece of the heteroclinic orbits connecting $0$ to $u_*$ that lies in $K_0\times\{v^u=0\}$ for each $\mu \in J$. Our above discussion implies that for any $\mu \in J$ for which a heteroclinic orbit connecting $0$ to $u_*$ exists, then $\Gamma_\mathrm{loc}$ is nonempty for this value of $\mu$. This leads to the following results, proven in \cite{Bramburger}.

\begin{figure} %Figure: Local intersection of manifolds about u_*
\centering
\includegraphics[width=0.3\textwidth]{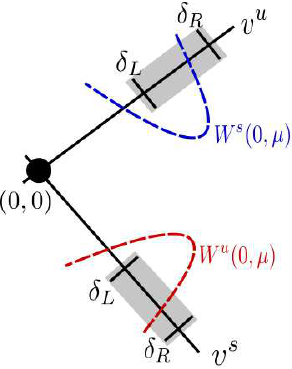}
\caption{The choices of $\delta_L,\delta_R$ guarantee that $W^u(0,\mu)$, the unstable manifold of $0$, must intersect $K_0\times\{0\} \subset \mathcal{I}_*\times\mathcal{I}_*$ for any value of $\mu$ for which $W^u(0,\mu) \cap W^s(u_*,\mu) \neq \emptyset$. Reversibility further implies that $W^s(0,\mu)$, the stable manifold of $0$, must intersect $\{0\}\times K_0\subset \mathcal{I}_*\times\mathcal{I}_*$ for any value of $\mu$ for which $W^s(0,\mu) \cap W^u(u_*,\mu) \neq \emptyset$. The shaded boxes represent $K_1 \times \{|v^u| < \varepsilon\}$ and $\{|v^s| < \varepsilon\} \times K_1$ from Lemma~\ref{lem:G*Fn}.}
\label{fig:Local_u*}
\end{figure} 

\begin{lem}[\cite{Bramburger}]\label{lem:G*Fn} %Lemma: G* Function
	There exists an $\varepsilon_* > 0$ and a function $G_*:K_1\times\mathcal{I}\times J\to\mathbb{R}$ such that $G_*(v^s,v^u,\mu) = 0$ if, and only if, $(v^s,v^u,\mu) \in (W^u(0,\mu)\times\{\mu\})$ with $|v^u| < \varepsilon_*$. Furthermore, 
	\[
		\nabla_{(v^s,\mu)} G_*(v^s,0,\mu) \neq 0
	\]
	for all $(v^s,0,\mu) \in \Gamma_\mathrm{loc}$ and $\partial_{v^s}G_*(v^s,0,\mu) \neq 0$ if, and only if, $(v^s,0,\mu) \in \Gamma_\mathrm{loc}$ represents a point of transverse intersection between the manifolds $W^u(0,\mu)$ and $W^s(u_*,\mu)$.
\end{lem}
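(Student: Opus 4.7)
The plan is to realize the set
\[
\mathcal{W} := \bigl\{(v^s,v^u,\mu) \in K_1 \times \mathcal{I} \times J : (v^s,v^u) \in W^u(0,\mu),\ |v^u| < \varepsilon_*\bigr\}
\]
as a smooth embedded codimension-one submanifold of the slab $K_1 \times (-\varepsilon_*,\varepsilon_*) \times J$, and then take $G_*$ to be a regular defining function for it, extended smoothly and nonvanishingly to the rest of $K_1 \times \mathcal{I} \times J$. Both gradient assertions will then be read off from the tangent plane to $\mathcal{W}$ along $\Gamma_{\mathrm{loc}}$.

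First I would invoke smooth dependence of hyperbolic invariant manifolds on parameters to produce a smooth parameterization $(\tau,\mu) \mapsto \gamma(\tau,\mu) = (\gamma^s(\tau,\mu),\gamma^u(\tau,\mu))$ of the component of $W^u(0,\mu)$ passing near $\Gamma_{\mathrm{loc}}$. The augmented map $(\tau,\mu) \mapsto (\gamma(\tau,\mu),\mu)$ is automatically a smooth injective immersion of rank two, since its differential has the block structure with $(0,1)$ in the last row; for $\varepsilon_*>0$ sufficiently small, its restriction to $|v^u| < \varepsilon_*$ is a proper embedding with image $\mathcal{W}$. A standard tubular-neighbourhood argument then yields a smooth $G_*: K_1 \times \mathcal{I} \times J \to \mathbb{R}$ that vanishes exactly on $\mathcal{W}$, with $\nabla G_* \neq 0$ there.

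To verify the gradient claims at $(v^s_0,0,\mu_0) \in \Gamma_{\mathrm{loc}}$, write $\gamma(\tau_0,\mu_0) = (v^s_0,0)$. The tangent plane $T_{(v^s_0,0,\mu_0)}\mathcal{W}$ is spanned by $T_\tau = (\partial_\tau\gamma^s,\partial_\tau\gamma^u,0)$ and $T_\mu = (\partial_\mu\gamma^s,\partial_\mu\gamma^u,1)$, so $\nabla G_*$ is proportional to
\[
T_\tau \times T_\mu = \bigl(\partial_\tau\gamma^u,\ -\partial_\tau\gamma^s,\ \partial_\tau\gamma^s\,\partial_\mu\gamma^u - \partial_\tau\gamma^u\,\partial_\mu\gamma^s\bigr),
\]
with all derivatives evaluated at $(\tau_0,\mu_0)$. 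Hence $\partial_{v^s} G_*$ is proportional to $\partial_\tau\gamma^u$, which vanishes precisely when the tangent to $W^u(0,\mu_0)$ at $(v^s_0,0)$ is parallel to $W^s(u_*,\mu_0) = \{v^u = 0\}$, i.e.\ precisely at non-transverse intersections, which is the desired transversality criterion. For the nonvanishing of $\nabla_{(v^s,\mu)} G_*$: at a transverse intersection the first component $\partial_\tau\gamma^u$ already suffices; at a quadratic tangency one has $\partial_\tau\gamma^u = 0$, so the immersion property forces $\partial_\tau\gamma^s \neq 0$, and smoothness of $\Gamma_{\mathrm{loc}}$ (which is locally cut out by $\gamma^u(\tau,\mu) = 0$) forces $\nabla_{(\tau,\mu)}\gamma^u \neq 0$, hence $\partial_\mu\gamma^u \neq 0$, making the third component $\partial_\tau\gamma^s\,\partial_\mu\gamma^u$ nonzero.

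The main obstacle I anticipate is justifying $\partial_\mu\gamma^u \neq 0$ at quadratic tangencies. In isolation the phrase \emph{quadratic tangency} only specifies nondegeneracy of the second-order contact in $\tau$ at fixed $\mu$; it does not say that varying $\mu$ unfolds the tangency. The argument must therefore lean on the global smoothness of $\Gamma$ guaranteed by Hypothesis~\ref{hyp:Heteroclinic}: the projection of $\Gamma$ into the local chart traces out $\Gamma_{\mathrm{loc}}$ as a smooth curve, so the defining equation $\gamma^u(\tau,\mu) = 0$ cannot have a zero degenerate in both $\tau$ and $\mu$ simultaneously, which forces the required $\mu$-derivative to be nonzero.
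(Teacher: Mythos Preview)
The paper does not give a proof of this lemma: it is quoted from \cite{Bramburger} and stated without argument, so there is no in-paper proof to compare against. Your proposal therefore stands on its own, and the approach is sound. Realizing $\bigcup_\mu W^u(0,\mu)$ as a smooth embedded surface in $(v^s,v^u,\mu)$-space, taking $G_*$ to be a regular defining function, and reading off the derivative conditions from the normal direction $T_\tau \times T_\mu$ is exactly the natural construction; the identification of $\partial_{v^s}G_*$ with a nonzero multiple of $\partial_\tau\gamma^u$ cleanly delivers the transversality criterion.

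Your self-identified obstacle is genuine and your resolution is the right one. The phrase \emph{quadratic tangency} in Hypothesis~\ref{hyp:Heteroclinic}(2) on its face constrains only the $\tau$-behaviour at fixed $\mu$, so the $\mu$-unfolding has to be imported from the assumed smoothness of $\Gamma$. Concretely: if at a tangency one had $\partial_\tau\gamma^u = \partial_\mu\gamma^u = 0$ with $\partial_\tau^2\gamma^u \neq 0$, then the Hessian of $\gamma^u$ has rank at least one, and the zero set $\{\gamma^u=0\}$ is locally either an isolated point, a transversal crossing of two arcs, or (in the rank-one case) a curve along which $\partial_\tau\gamma^u$ vanishes identically---meaning a whole arc of tangencies. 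The first two cases contradict $\Gamma$ being a smooth embedded curve; the third produces a curve of tangencies on which $\mu$ is a local coordinate, which cannot close up into the bounded loop of Hypothesis~\ref{hyp:Heteroclinic}(1),(3) without eventually hitting transverse points and hence without $\partial_\tau\gamma^u$ changing sign, a contradiction. One could alternatively take the view that in one-parameter families the standard meaning of \emph{quadratic tangency} already includes the transversal $\mu$-unfolding (saddle-node of heteroclinics), in which case $\partial_\mu\gamma^u \neq 0$ is built into the hypothesis; either reading closes the gap.
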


\subsection{Local Coordinates About $0$}\label{sec:0} %Section: 0 ----------------------------------------------------------------------------------------------------------------------------------------------------------

In this section we derive similar results to the previous section to show that the dynamics local to the fixed point $u = 0$ can be handled in a very similar way to those of the fixed point $u = u_*$. Here again Hypothesis~\ref{hyp:FixedPts} gives that the eigenvalues of $DF_u(0,\mu)$ are of the form $0 < \lambda_0(\mu)^{-1} < 1 < \lambda_0(\mu)$, for some smooth function $\lambda_0(\mu)$, for all $\mu \in J$. We begin by providing the analogues of Lemmas~\ref{lem:Shilnikov*} and \ref{lem:Shil*Sol} near $u = 0$. Both results are stated without proof since their proof is identical to their $u = u_*$ analogues.

\begin{lem}\label{lem:Shilnikov0} %Lemma: Shilnikov0 Variables
	Assume Hypotheses~\ref{hyp:Reverser} and \ref{hyp:FixedPts} are met. Then, there exists $\delta_0 > 0$, a smooth change of coordinates mapping $u$ to $v = (w^s,w^u)$ near the fixed point $u = 0$, and smooth functions $g^s_i,g^u_i:\mathcal{I}_0\times\mathcal{I}_0\times J\to \mathbb{R}$, $i = 1,2$, so that (\ref{Map}) is of the form 
	\begin{equation}\label{Shil0}
		\begin{split}
			w^s_{n+1} &= [\lambda_0(\mu)^{-1} + g_1^s(w^s,w^u,\mu)w^s_n + g_2^s(w^s,w^u,\mu)w^u_n]w^s_n, \\
			w^u_{n+1} &= [\lambda_0(\mu) + g_1^u(w^s,w^u,\mu)w^s_n + g_2^u(w^s,w^u,\mu)w^u_n]w^u_n, \\
		\end{split}
	\end{equation}	
	for all $\mu \in J$, where $w^s_n,w^u_n \in \mathcal{I}_0 := [-\delta_0,\delta_0]$, and the reverser $\mathcal{R}$ acts by 
	\begin{equation}\label{Shil0Reverser}
		\mathcal{R}(w^s,w^u) = (w^u,w^s).
	\end{equation}
\end{lem}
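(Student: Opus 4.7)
The plan is to follow the template of Lemma~\ref{lem:Shilnikov*} verbatim, simply replacing the fixed point $u_*$ by the trivial fixed point $0$ throughout. All structural ingredients used in the $u_*$ proof are available at $0$: Hypothesis~\ref{hyp:FixedPts} provides hyperbolicity with real positive eigenvalues $\lambda_0(\mu)^{\pm 1}$, Hypothesis~\ref{hyp:Reverser} supplies the reversibility relation $F^{-1}(u,\mu) = \mathcal{R}F(\mathcal{R}u,\mu)$, and $0 \in \mathrm{Fix}(\mathcal{R})$ guarantees that the linearization $F_u(0,\mu)$ is $\mathcal{R}$-equivariant in the appropriate sense. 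No new hypothesis is needed, and no new phenomenon appears, so a bespoke argument is unnecessary.

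Concretely, I would first apply a smooth $\mu$-dependent linear change of variables that places the stable and unstable eigenvectors of $F_u(0,\mu)$ along two transverse axes. Reversibility combined with the one-dimensionality of $\mathrm{Fix}(\mathcal{R})$ forces the stable and unstable eigenvectors to be exchanged by $\mathcal{R}$; choosing a normalization in which $\mathcal{R}$ acts by the coordinate swap yields (\ref{Shil0Reverser}) directly. Next, using that on a uniform neighbourhood of $0$ the local invariant manifolds $W^{s,u}(0,\mu)$ are smooth graphs over the corresponding eigenspaces, apply a smooth nonlinear change of coordinates straightening $W^s(0,\mu)$ onto $\{w^u = 0\}$. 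The manifold identity $W^u(0,\mu) = \mathcal{R}W^s(0,\mu)$ from (\ref{RevMans}) then ensures that the $\mathcal{R}$-symmetric partner of this straightening simultaneously flattens $W^u(0,\mu)$ onto $\{w^s = 0\}$, and doing both in the $\mathcal{R}$-equivariant manner preserves the swap action of $\mathcal{R}$.

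With both invariant manifolds coinciding with the coordinate axes, the $w^s$-component of the map must vanish on $\{w^s = 0\}$, hence factors as $w^s_n$ times a smooth function of $(w^s_n, w^u_n, \mu)$; symmetrically for the $w^u$-component on $\{w^u = 0\}$. Expanding these factors about the origin, isolating the leading linear parts $\lambda_0(\mu)^{-1}$ and $\lambda_0(\mu)$, and absorbing the remaining first-order contributions in $(w^s_n, w^u_n)$ into smooth functions $g_1^{s,u}, g_2^{s,u}$ produces precisely the normal form (\ref{Shil0}). Choosing $\delta_0 > 0$ small enough ensures that the construction is valid uniformly over the compact parameter interval $J$.

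The only subtle point — and the only place where the argument is more than a routine smooth conjugation to a partial normal form — is arranging that the linear diagonalization and the nonlinear straightening can be carried out simultaneously in an $\mathcal{R}$-equivariant fashion, so that the reverser ends up exactly in the swap form (\ref{Shil0Reverser}) rather than merely in a conjugate of it. This is the same obstacle addressed in the $u_*$ analogue, resolved there by exploiting that $\mathrm{Fix}(\mathcal{R})$ is one-dimensional and that the fixed point lies in $\mathrm{Fix}(\mathcal{R})$; both conditions hold equally well at $0$, so the argument transfers with purely cosmetic changes.
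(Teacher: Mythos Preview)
Your proposal is correct and matches the paper's approach exactly: the paper states Lemma~\ref{lem:Shilnikov0} without proof, explicitly remarking that its proof is identical to that of the $u_*$-analogue Lemma~\ref{lem:Shilnikov*}. Your outline of that analogue argument, together with the observation that all structural ingredients (hyperbolicity, real positive eigenvalues, $0\in\mathrm{Fix}(\mathcal{R})$, reversibility) are equally available at $u=0$, is precisely the justification the paper intends.
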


\begin{lem}\label{lem:Shil0Sol} %Lemma: Shilnikov0 Solution
	There exists constants $\eta_0 \in (0,1)$ and $M_0 > 0$ such that the following is true: for each $N > 0$, $b^u,b^s\in\mathcal{I}_0$, and $\mu \in J$ there exists a unique solution near the origin to (\ref{Shil0}), written $w_n = (w^s_n,w^u_n) \in \mathcal{I}_0\times\mathcal{I}_0$ with $n \in \{0,\dots,N\}$, such that
	\[
		w^s_0 = b^s, \quad w^u_N = b^u.
	\]
	Furthermore, this solution satisfies
	\begin{equation}\label{Shil0Bnds}
		|w^s_n| \leq M_0\eta_0^n, \quad |w^u_n| \leq M_0\eta_0^{N-n},
	\end{equation}
	for all $n \in \{0,\dots,N\}$, $w_n = w_n(b^s,b^u,\mu)$ depends smoothly on $(b^s,b^u,\mu)$, and the bounds (\ref{Shil*Bnds}) also hold for the derivatives of $w$ with respect to $(b^s,b^u,\mu)$. Moreover, 
	\begin{equation}\label{Shil0SolReverser}
		\mathcal{R}(w^s_n,w^u_n) = (w^u_{N-n},w^s_{N-n}),
	\end{equation}
	for all $n \in \{0,\dots,N\}$. In particular, the solution $w$ is symmetric if, and only if, $b^s = b^u$.
\end{lem}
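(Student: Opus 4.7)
The plan is to replicate verbatim the contraction-mapping argument used for Lemma~\ref{lem:Shil*Sol} in \cite{Bramburger}, replacing the constants tied to the fixed point $u_*$ by those tied to $u=0$. The normal form (\ref{Shil0}) has exactly the same structural features as (\ref{Shil*}): linear contraction by $\lambda_0(\mu)^{-1}$ in the $w^s$ direction, linear expansion by $\lambda_0(\mu)$ in the $w^u$ direction, and smooth nonlinear corrections $g^{s,u}_i w^\bullet_n$ that vanish at the origin and are controlled uniformly in $\mu \in J$ because $J$ is compact and $g^{s,u}_i$ are smooth. Consequently, every estimate in the $u_*$ argument carries over after relabelling $(\lambda_*,\eta_*,M_*,\delta_*) \mapsto (\lambda_0,\eta_0,M_0,\delta_0)$.

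Concretely, I would first fix a number $\eta_0 \in (\sup_{\mu \in J}\lambda_0(\mu)^{-1},\,1)$ and, for each $N \geq 0$, introduce the weighted Banach space
\[
 \mathcal{B}_N := \Bigl\{ w=(w^s_n,w^u_n)_{n=0}^N : \|w\|_N := \max\Bigl( \sup_{0 \le n \le N} \eta_0^{-n}|w^s_n|,\ \sup_{0 \le n \le N} \eta_0^{-(N-n)}|w^u_n| \Bigr) < \infty \Bigr\}.
\]
The boundary-value problem $w^s_0=b^s$, $w^u_N=b^u$ together with (\ref{Shil0}) is then recast as a fixed-point equation $w = T(w;b^s,b^u,\mu)$ in which the $w^s$ component is propagated forward from index $0$ and the $w^u$ component backward from index $N$, in both cases inverting the linear part of the recursion explicitly and treating the $g^{s,u}_i$-terms as a nonlinear perturbation. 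Using the Lipschitz behaviour of the $g^{s,u}_i$ at the origin, a routine estimate in $\|\cdot\|_N$ shows that $T$ leaves a small closed ball about $0$ in $\mathcal{B}_N$ invariant and contracts on it with a rate bounded away from $1$ uniformly in $N$, $(b^s,b^u)\in\mathcal{I}_0^2$, and $\mu\in J$, provided $\delta_0$ is chosen sufficiently small. The Banach fixed-point theorem then yields a unique solution, and the claimed bounds (\ref{Shil0Bnds}) are built into the definition of $\|\cdot\|_N$.

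Smoothness in $(b^s,b^u,\mu)$, together with the matching weighted bounds on the derivatives of $w$, is obtained from the uniform (parametric) contraction-mapping principle after formally differentiating the fixed-point equation in each parameter; the derivatives satisfy linear problems with the same contraction structure, so they sit in the analogous weighted spaces with the same constants. The reversibility identity (\ref{Shil0SolReverser}) is essentially free: if $(w^s_n,w^u_n)_{n=0}^N$ solves (\ref{Shil0}) with data $(b^s,b^u)$, then the sequence $(\tilde w^s_n,\tilde w^u_n) := (w^u_{N-n},w^s_{N-n})$ solves (\ref{Shil0}) with data $(b^u,b^s)$, as follows from the action (\ref{Shil0Reverser}) of $\mathcal{R}$ and Hypothesis~\ref{hyp:Reverser}; uniqueness then forces (\ref{Shil0SolReverser}), and symmetric solutions are exactly those with $b^s=b^u$. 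The only delicate point, as in the $u_*$ case, is ensuring that every contraction constant and every derivative bound is uniform in $N$; this is precisely what the weighted norm $\|\cdot\|_N$ is engineered to provide, and is the reason a naive $\ell^\infty$ argument would miss the exponential decay in (\ref{Shil0Bnds}).
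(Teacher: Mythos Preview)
Your proposal is correct and matches the paper's approach exactly: the paper states Lemma~\ref{lem:Shil0Sol} without proof, noting that it is identical to that of its $u=u_*$ analogue Lemma~\ref{lem:Shil*Sol}, which is precisely the contraction-mapping argument you describe. If anything, your sketch supplies more detail than the paper does here.
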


\begin{figure} %Figure: Pull-back mapping
\centering
\includegraphics[width=0.6\textwidth]{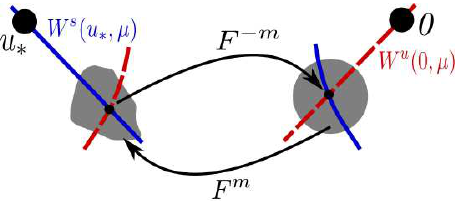}
\caption{A visual depiction of the proof of Lemma~\ref{lem:mdefn} for a fixed $\mu \in J$. A point of intersection along $W^s(u_*,\mu) \cap W^u(0,\mu)$ in a neighbourhood of $u_*$ can be mapped back to a neighbourhood of $0$ by $F^{-m}$, for a fixed integer $m$. Similarly, the shaded disk on the right representing a neighbourhood of a point of intersection of $W^s(u_*,\mu) \cap W^u(0,\mu)$ near $0$ can be mapped forward to a neighbourhood near $u_*$ by $F^m$.}
\label{fig:PullBack}
\end{figure} 

We now provide the following lemma relating solutions near the fixed point $0$ to fixed points $u_*$. The reader is referred to Figure~\ref{fig:PullBack} for a visual guide to the proof of the results.

\begin{lem}\label{lem:mdefn}
	There exists a fixed positive integer $m \geq 1$ such that $F^{-m}(\Gamma_\mathrm{loc}) \subset \{0\}\times\mathring{\mathcal{I}}_0\times\mathring{J}$. Furthermore, there exists $\varepsilon_0 > 0$ such that the open set
	\[
		U_0 := \{(w^s,w^u,\mu)\in\mathcal{I}_0\times\mathcal{I}_0\times J: \mathrm{dist}((w^s,w^u,\mu),F^{-m}(\Gamma_\mathrm{loc}))<\varepsilon_0\}
	\]  
	is such that $F^m(U_0) \subset \Sigma_\mathrm{in}\times\mathring{J}$.
\end{lem}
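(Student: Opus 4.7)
\medskip

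\textbf{Proof proposal.} The plan is to exploit the fact that $\Gamma_\mathrm{loc}$ consists entirely of points lying on $W^u(0,\mu) \cap W^s(u_*,\mu)$ for $\mu$ in the compact interior of $J$, combined with the compactness of $K_0$, in order to iterate backward a uniform number of times. Concretely, since every point in $\Gamma_\mathrm{loc}$ has the form $((v^s,0),\mu)$ with $v^s\in K_0$ and belongs to $W^u(0,\mu)$, its backward orbit under $F^{-1}$ converges to $0$. Because $\Gamma_\mathrm{loc}$ is a closed (and hence compact) subset of $K_0\times\{0\}\times \mathring J$, this convergence is uniform in the base point: for any prescribed $\rho>0$ there exists $m=m(\rho)\geq 1$ such that $F^{-m}(\Gamma_\mathrm{loc})$ lies in the Euclidean ball of radius $\rho$ about the origin in the $u$-coordinates for every $\mu\in J$.

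Choosing $\rho$ small enough that this ball is contained in the domain of the normal-form coordinates $(w^s,w^u)$ supplied by Lemma~\ref{lem:Shilnikov0}, we can express $F^{-m}(\Gamma_\mathrm{loc})$ in these coordinates and obtain points with $|w^u|<\delta_0$, hence in $\mathring{\mathcal{I}}_0$. Moreover, since the local unstable manifold of $0$ is precisely $\{w^s=0\}$ in these coordinates (a direct consequence of Lemma~\ref{lem:Shil0Sol} applied with $b^s=0$), and since $F^{-m}$ preserves $W^u(0,\mu)$, we have $w^s\equiv 0$ on $F^{-m}(\Gamma_\mathrm{loc})$. The $\mu$-coordinate stays in $\mathring J$ throughout, because $F^{-m}$ acts fibrewise in $\mu$ and $\Gamma_\mathrm{loc}\subset K_0\times\{0\}\times\mathring J$ by definition. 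Hence $F^{-m}(\Gamma_\mathrm{loc})\subset \{0\}\times\mathring{\mathcal{I}}_0\times\mathring J$, which establishes the first claim.

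For the second claim, note that $\Gamma_\mathrm{loc}\subset K_0\times\{v^u=0\}\times \mathring J$ sits in the interior of $\Sigma_\mathrm{in}\times J= K_1\times\mathcal I\times J$, because $K_0\Subset K_1$ and $0\in\mathring{\mathcal I}$. By compactness of $\Gamma_\mathrm{loc}$ there exists $\tau>0$ such that the $\tau$-neighbourhood of $\Gamma_\mathrm{loc}$ in $\mathcal{I}_*\times\mathcal{I}_*\times J$ is still contained in $\Sigma_\mathrm{in}\times\mathring J$. The diffeomorphism $F^m$ (viewed as acting fibrewise in $\mu$) is uniformly continuous on the compact set $F^{-m}(\Gamma_\mathrm{loc})$ together with a fixed compact neighbourhood of it; so there exists $\varepsilon_0>0$ such that if a point in $\mathcal{I}_0\times\mathcal{I}_0\times J$ lies within Euclidean distance $\varepsilon_0$ of $F^{-m}(\Gamma_\mathrm{loc})$, then its image under $F^m$ lies within distance $\tau$ of $\Gamma_\mathrm{loc}$, and hence in $\Sigma_\mathrm{in}\times \mathring J$. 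This is precisely the required property of $U_0$.

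The only mild subtlety I foresee is bookkeeping between the three sets of coordinates (the original $u$-coordinates in which $F$ is defined, the normal-form $v$-coordinates near $u_*$, and the normal-form $w$-coordinates near $0$), in order to verify that the uniform contraction along $W^u(0,\mu)$ genuinely translates into smallness of $w^u$ and exact vanishing of $w^s$. Once this identification is spelled out, both conclusions follow from compactness plus continuity of $F^{\pm m}$; no hard analysis is required beyond what is already encoded in Lemmas~\ref{lem:Shilnikov0} and \ref{lem:Shil0Sol}.
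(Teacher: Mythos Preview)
Your proposal is correct and follows essentially the same approach as the paper: backward convergence along $W^u(0,\mu)$ plus compactness of $\Gamma_{\mathrm{loc}}$ gives a uniform $m$, and then continuity of the diffeomorphism $F^m$ together with the fact that $\Gamma_{\mathrm{loc}}$ sits in the interior of $\Sigma_{\mathrm{in}}\times\mathring J$ yields the $\varepsilon_0$-neighbourhood statement. If anything, your write-up is more explicit than the paper's (which dispatches both claims in a few lines), particularly in spelling out why $w^s\equiv 0$ on $F^{-m}(\Gamma_{\mathrm{loc}})$ and the uniform-continuity argument for $U_0$.
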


\begin{proof}
	We begin by noting that since $\Gamma_\mathrm{loc}$ contains elements lying on the intersections of $W^u(0,\mu)$ and $W^s(u_*,\mu)$ for varying values of $\mu$, for each point $(u,\mu) \in \Gamma_\mathrm{loc}$ we have that $F^{-m}(u,\mu) \to 0$ as $m \to \infty$. In particular, for each point $(u,\mu) \in \Gamma_\mathrm{loc}$ there exists some $m_0 \geq 1$ such that  $F^{-m}(u,\mu) \in \{0\}\times\mathcal{I}_0$ for all $m \geq m_0$. Since $\Gamma_\mathrm{loc}$ is compact, it follows that there exists a minimal value of $m \geq 1$ such that $F^{-m}(\Gamma_\mathrm{loc}) \subset \{0\}\times\mathring{\mathcal{I}}_0\times\mathring{J}$. The final claim of the lemma simply follows from the fact that $\Gamma_\mathrm{loc}$ lies in the interior of $ \Sigma_\mathrm{in}\times\mathring{J}$ and the fact that $F$ is a diffeomorphism, i.e. it has a continuous inverse.    
\end{proof}

Note that reversibility of (\ref{Map}) implies that $F^{-m}(\mathcal{R}U_0) \subset \Sigma_\mathrm{out}\times\mathring{J}$. This fact will will be used throughout the proofs in the following subsections.

\subsection{Symmetric 2-Pulses}\label{subsec:Sym2} %Subsection: Symmetric 2-Pulses ---------------------------------------------------------------------------------------------------------------------------

We begin by proving the existence of symmetric  2-pulses to $u = 0$ to best illustrate the methods. Here, by definition, a symmetric  2-pulse $u = \{u_n\}_{n\in\mathbb{Z}}$ satisfies
\begin{subequations}\label{2Pulse}
	\begin{equation}\label{2Pulse1}
		u_n \in \mathcal{I}_0\times\mathcal{I}_0\ {\rm for}\ n\in\{0,\dots,N_2\}	\ \mathrm{with}\ u_n = \mathcal{R}u_{N_2 - n}
	\end{equation}
	\begin{equation}\label{2Pulse2}
		u_{N_2} \in W^s(0,\mu) 	
	\end{equation}
	\begin{equation}\label{2Pulse3}
		u_{N_2+m} \in \Sigma_\mathrm{in}\cap W^u(0,\mu) 	
	\end{equation}
	\begin{equation}\label{2Pulse4}
		u_n \in \mathcal{I}_*\times\mathcal{I}_*\ {\rm for}\ n\in\{N_2+m,\dots,N_2 + N_3+m\}	
	\end{equation}
	\begin{equation}\label{2Pulse5}
		u_{N_2+N_3+m} \in \Sigma_\mathrm{out}\cap W^s(0,\mu) 	
	\end{equation}
\end{subequations}  
for sufficiently large $N_2,N_3 \gg 1$, and $m \geq 0$ is the integer defined in Lemma~\ref{lem:mdefn}. We note that reversibility of $F$ and (\ref{2Pulse1}) imply that 
\[
	u_{\frac{N_2}{2}+n} = \mathcal{R}u_{\frac{N_2}{2}-n}
\] 
or 
\[
	u_{\frac{N_2}{2}+1+n} = \mathcal{R}u_{\frac{N_2}{2}-n}
\] 
for all $n \in \mathbb{Z}$, depending on the parity of $N_2$. Therefore, a homoclinic orbit satisfying (\ref{2Pulse1}) necessarily is symmetric and hence, following the iterates backward from $n = 0$ results in another sequence of iterations of length $N_3$ for which the orbit remains in the neighbourhood $\mathcal{I}_*\times\mathcal{I}_*$ of the fixed point $u = u_*$. These iterations are separated by $N_2$ iterations in the neighbourhood $\mathcal{I}_0\times\mathcal{I}_0$ of the fixed point $u=0$, and therefore using the terminology of Theorem~\ref{thm:MainResult} such a symmetric  orbit corresponds to the integers $N_1,N_2,N_3$ with $N_2,N_3$ as above and $N_1 = N_3$. 

We present the following result which will be used throughout this subsection and those which follow. It is stated without proof since it is proven via similar methods to many root-finding results.   

\begin{thm}\label{thm:Roots} %Theorem: Kantorovich-style Roots
	Let $H:\mathbb{R}^d \to \mathbb{R}^d$ be a smooth function and assume there exist an invertible matrix $A \in \mathbb{R}^{d\times d}$, $x_0 \in \mathbb{R}^d$, $0 < \kappa < 1$, and $\rho > 0$ such that
	\begin{enumerate}
		\item $\|1 - A^{-1}DH(x)\| \leq \kappa$ for all $x \in B_\rho(x_0)$,
		\item $\|A^{-1}H(x_0)\|\leq (1-\kappa)\rho$.
	\end{enumerate}
	Then $H$ has a unique root $x_*$ in $B_\rho(x_0)$, and $|x_* - x_0| \leq \frac{1}{1 - \kappa}\|A^{-1}H(x_0)\|$.
\end{thm}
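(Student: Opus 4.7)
The plan is to recognize this as a standard Newton--Kantorovich style contraction argument, with $A$ playing the role of an approximate Jacobian. I would introduce the auxiliary map
\[
  T(x) \;:=\; x - A^{-1}H(x),
\]
and observe that $x$ is a fixed point of $T$ if and only if $A^{-1}H(x)=0$, which (by invertibility of $A$) happens exactly when $H(x)=0$. So the entire theorem reduces to showing that $T$ has a unique fixed point in $B_\rho(x_0)$ together with the quantitative estimate.

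Next I would verify the two hypotheses needed for the Banach fixed point theorem on $B_\rho(x_0)$. For the contraction property, I note that $DT(x) = I - A^{-1}DH(x)$, so hypothesis (1) gives $\|DT(x)\| \leq \kappa$ throughout $B_\rho(x_0)$; integrating along line segments inside the convex ball (via the fundamental theorem of calculus) yields $\|T(x)-T(y)\|\leq \kappa\|x-y\|$ for $x,y\in B_\rho(x_0)$. For self-mapping, I would estimate, for any $x\in B_\rho(x_0)$,
\[
  \|T(x)-x_0\| \;\leq\; \|T(x)-T(x_0)\| + \|T(x_0)-x_0\| \;\leq\; \kappa\|x-x_0\| + \|A^{-1}H(x_0)\|,
\]
and invoke hypothesis (2) to bound this by $\kappa\rho + (1-\kappa)\rho = \rho$, so $T(B_\rho(x_0))\subset B_\rho(x_0)$. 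Banach's fixed point theorem on the complete metric space $\overline{B_\rho(x_0)}$ then delivers a unique fixed point $x_*$, which is the unique root of $H$ in this ball.

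Finally, the quantitative estimate on $|x_*-x_0|$ comes directly from the standard a~posteriori bound in the Banach fixed point theorem: iterating $x_{k+1}=T(x_k)$ starting at $x_0$ yields $\|x_{k+1}-x_k\| \leq \kappa^k\|T(x_0)-x_0\|$, so summing the geometric series gives
\[
  \|x_* - x_0\| \;\leq\; \sum_{k=0}^{\infty}\kappa^k \|T(x_0)-x_0\| \;=\; \frac{1}{1-\kappa}\|A^{-1}H(x_0)\|.
\]

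Since each step is essentially routine, I do not anticipate a genuine obstacle; the only mild subtlety worth flagging is that uniqueness is only asserted inside $B_\rho(x_0)$ (the theorem makes no global claim), and one should be careful that hypothesis (2) is stated as a non-strict inequality so the self-mapping argument lands in the closed ball, which is precisely where the Banach fixed point theorem applies.
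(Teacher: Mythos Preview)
Your proposal is correct and is exactly the standard contraction/Newton--Kantorovich argument one would expect here. In fact the paper does not supply a proof of this theorem at all: it explicitly states the result without proof, noting only that it follows ``via similar methods to many root-finding results,'' so your approach is entirely consistent with what the author has in mind.
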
 

\begin{lem}\label{lem:Sym2Pulse}%Lemma: Existence of symmetric 2-pulses
	Assume Hypothesis~\ref{hyp:Reverser}-\ref{hyp:Heteroclinic} and that at $\mu = \bar{\mu}$ the manifolds $W^u(0,\bar{\mu})$ and $W^s(u_*,\bar{\mu})$ intersect transversely at the point $(\bar{v}^s,0,\bar{\mu}) \in \Gamma_\mathrm{loc}$. Then, there exists $M_2^\mathrm{s} > 0$ such that for all $N_2,N_3 > M_2^\mathrm{s}$ the map (\ref{Map}) evaluated at $\mu=\bar{\mu}$ has a symmetric  2-pulse solution. Furthermore, the solution is on-site if $N_2$ is odd and off-site if $N_2$ is even.
\end{lem}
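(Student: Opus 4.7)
The plan is to construct the right half of the symmetric 2-pulse as a concatenation of three parameterized pieces---a half of the middle segment near $0$, the right pulse near $u_*$, and the right decaying tail---and then solve the resulting finite-dimensional matching problem by an application of Theorem \ref{thm:Roots}. By the reversibility identity in Lemma \ref{lem:Shil0Sol}, imposing the appropriate symmetry condition at the midpoint of the middle segment automatically extends the right half to a full symmetric orbit on $\mathbb{Z}$, and the parity of $N_2$ dictates the on-site or off-site character of that midpoint.

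In detail, I would use the Shilnikov coordinates of Lemma \ref{lem:Shilnikov0} to parameterize the middle segment on $\{0,\ldots,N_2\}$ by its boundary values $w^s_0=b^s$, $w^u_{N_2}=b^u$ via Lemma \ref{lem:Shil0Sol}, and the coordinates of Lemma \ref{lem:Shilnikov*} to parameterize the pulse on $\{N_2+m,\ldots,N_2+N_3+m\}$ by $v^s_{N_2+m}=a^s$, $v^u_{N_2+N_3+m}=a^u$ via Lemma \ref{lem:Shil*Sol}, where $m$ is the integer supplied by Lemma \ref{lem:mdefn}. The two transition blocks of length $m$ are implemented by applying $F^m$, and the right tail is required to lie on the local stable manifold $\{w^u=0\}$ of $0$. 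Requiring the pieces to fit together as a single trajectory of (\ref{Map}) at $\mu=\bar{\mu}$ gives three matching conditions: the symmetry condition at the midpoint, which via the reversibility in Lemma \ref{lem:Shil0Sol} reduces to $b^s=b^u$ in the on-site case and to the analogous off-site identity otherwise; the transition equality $F^m(u_{N_2})=v_{N_2+m}$; and a closure condition that $F^m$ applied to the pulse-piece terminal value $v_{N_2+N_3+m}$ lands on $\{w^u=0\}$. Using the reversibility identity (\ref{RevMans}) and Lemma \ref{lem:G*Fn}, the closure condition becomes a scalar equation in $G_*$, whose derivative satisfies $\partial_{v^s}G_*(\bar{v}^s,0,\bar{\mu})\neq 0$ by the transverse-intersection hypothesis.

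I would then verify the hypotheses of Theorem \ref{thm:Roots} for the resulting square system in $(a^s,a^u,b^s,b^u)$. The reference point is extracted from the heteroclinic orbit through $(\bar{v}^s,0,\bar{\mu})$ by truncating and concatenating two copies of it according to the desired lengths $N_2,N_3$; at this reference the matching equations vanish up to residuals of size $O(\eta_*^{N_3}+\eta_0^{N_2})$, by the exponential bounds (\ref{Shil*Bnds}), (\ref{Shil0Bnds}) and smoothness of $F^m$. The linearization of the matching map is dominated by the non-vanishing scalar derivative $\partial_{v^s}G_*(\bar{v}^s,0,\bar{\mu})$ coming from the transversality, together with the identity blocks in the pulse-piece parameters; all off-diagonal entries are exponentially small in $N_2,N_3$ by Lemmas \ref{lem:Shil*Sol} and \ref{lem:Shil0Sol}, so the inverse of the linearization is uniformly bounded. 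Choosing $M_2^{\mathrm{s}}$ large enough makes both the residual and the nonlinear Lipschitz constant small enough for Theorem \ref{thm:Roots} to apply, producing a unique symmetric 2-pulse near the reference, with on-site/off-site character determined by the parity of $N_2$.

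The main obstacle will be to propagate the transverse-intersection hypothesis through the compositions with $F^m$ and the long pulse pieces so that the linearization of the matching map is invertible with a bound uniform in $N_2,N_3$. The Shilnikov diagonal form provided by Lemmas \ref{lem:Shilnikov*} and \ref{lem:Shilnikov0}, together with the derivative bounds of Lemmas \ref{lem:Shil*Sol} and \ref{lem:Shil0Sol}, confine all cross-terms to exponentially small corrections, so the non-degeneracy inherited from $G_*$ controls the leading-order invertibility once $N_2,N_3$ are sufficiently large.
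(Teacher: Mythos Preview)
Your proposal is correct and follows essentially the same approach as the paper: parameterize the middle segment near $0$ and the right pulse near $u_*$ via the Shilnikov boundary-value lemmas, impose the symmetry condition $b^s=b^u$ (which, by Lemma~\ref{lem:Shil0Sol}, gives symmetry in both the on-site and off-site cases, with the parity of $N_2$ alone determining which), match through $F^m$, encode the right-tail condition as a scalar equation in $G_*$, and close via Theorem~\ref{thm:Roots} using the transversality $\partial_{v^s}G_*(\bar v^s,0,\bar\mu)\neq 0$. The only cosmetic differences are that the paper eliminates $b^s$ from the outset (working directly in the three variables $(b^u,a^s,a^u)$) and writes the closure condition in $\Sigma_{\mathrm{out}}$ rather than after a further $F^m$; your formulation is equivalent.
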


\begin{proof}
	Throughout this proof we will fix $\mu = \bar{\mu}$, and therefore suppress the dependence of solutions on $\mu$. Furthermore, taking $(\bar{v}^s,0,\bar{\mu}) \in \Gamma_\mathrm{loc}$, we use Lemma~\ref{lem:mdefn} to define $\bar{w}^u$ so that 
	\[
		(\bar{w}^u,0) = F^{-m}((\bar{v}^s,0),\bar{\mu}) \in (\mathcal{I}_0\times\mathcal{I}_0)\cap W^u(0,\bar{\mu}) \cap W^s(u_*,\bar{\mu}).
	\]  
	Then, using Lemma~\ref{lem:Shil0Sol} we see that for arbitrary $b^u$ sufficiently small and every integer $N_2 \geq 1$, we have the existence of a reversible solution to (\ref{Shil0}), here denoted as $\{(w^s,w^u)\}_{n=0}^{N_2}\subset\mathcal{I}_0\times\mathcal{I}_0$ satisfying 
	\[
		\begin{split}
			w^s_n &= w^u_{N_2 - n}, \\
			w^s_0 &= w^u_{N_2} = \bar{w}^u + b^u,
		\end{split}
	\]
	where $w^j_n = w^j_n(b^u)$ depend smoothly on $b^u$ in a neighbourhood of $0$ for $j = s,u$ and all $n \in \{0,\dots,N_2\}$. Furthermore, from Lemma~\ref{lem:Shil0Sol} we have that there exists $M_0 > 0$ and $\eta_0 \in (0,1)$ such that 
	\begin{equation}\label{wBnd}
		|w^s_{N_2}(b^u)| \leq M_0\eta_0^{N_2},
	\end{equation}
	uniformly in $b^u$ in a neighbourhood of $0$, and the bound (\ref{wBnd}) holds for all partial derivatives of $w^s_{N_2}(b^u)$ with respect to $b^u$. It now follows from (\ref{Shil0SolReverser}) that when $N_2$ is odd the solution is on-site and when $N_2$ is even the solution is off-site.   

	Similarly, using Lemma~\ref{lem:Shil*Sol} we see that for sufficiently small $a^s,a^u$ and every integer $N_3 \geq 1$, we have the existence of a solution to (\ref{Shil*}), here denoted as $\{(v^s,v^u)\}_{n = 0}^{N_3}\subset\mathcal{I}_*\times\mathcal{I}_*$ satisfying
	\[
		\begin{split}
			v^s_0 &= \bar{v}^s + a^s, \\
			v^u_{N_3} &= \bar{v}^s +a^u, 
		\end{split}
	\] 
	where $v^j_n = v^j_n(a^s,a^u)$ depend smoothly on $(a^s,a^u)$ in a neighbourhood of $(0,0)$ for $j = s,u$ and all $n \in \{0,\dots,N_3\}$. Moreover, Lemma~\ref{lem:Shil*Sol} gives the existence of constants $M_* > 0$ and $\eta_* \in (0,1)$ such that 
	\begin{equation}\label{vBnd}
		|v^u_{0}(a^s,a^u)|, |v^s_{N_3}(a^s,a^u)| \leq M_*\eta_*^{N_3},  
	\end{equation}
	for all sufficiently small $a^s,a^u$ that guarantee $\bar{v}^s + a^s,\bar{v}^s + a^u \in \mathcal{I}_*$, and the bound (\ref{vBnd}) holds for all partial derivatives of $v^u_{0}(a^s,a^u)$ and $v^s_{N_3}(a^s,a^u)$ with respect to $(a^s,a^u)$.
	
	Now, to satisfy the conditions of (\ref{2Pulse}), we begin by taking $N_2,N_3 \geq 1$ sufficiently large so that using (\ref{wBnd}) and (\ref{vBnd}) we can guarantee 
	\[
		(w^s_{N_2}(b^u),\bar{w}^u + b^u,\bar{\mu})\in U_0, \quad |v^u_{0}(a^s,a^u)|, |v^s_{N_3}(a^s,a^u)| < \varepsilon_*, 	
	\]
	for all $(b^u,a^s,a^u)$. Lemma~\ref{lem:G*Fn} and reversibility of (\ref{Map}) then imply that satisfying (\ref{2Pulse5}) becomes equivalent to solving 
	\begin{equation}\label{2PulseMatch1}
		G_*(v^u_{N_3}(a^s,a^u),v^s_{N_3}(a^s,a^u),\bar{\mu}) = G_*(\bar{v}^s + a^u,\mathcal{O}(\eta_*^{N_3}),\bar{\mu}) = 0,
	\end{equation}
	since $\Sigma_\mathrm{out}\cap W^s(0,\mu) = \mathcal{R}(\Sigma_\mathrm{in}\cap W^u(0,\mu))$ and $v^s_{N_3}(a^s,a^u) = \mathcal{O}(\eta_*^{N_3})$ from (\ref{vBnd}). Our final matching condition is to guarantee that after exactly $m$ iterations the point $(w^s_{N_2}(b^u),w^u_{N_2}(b^u))$ is mapped under the action of $F$ to $(v^s_0(a^u,a^s),v^u_0(a^s,a^u))$. That is, we have to solve  
	\begin{equation}\label{2PulseMatch2}
		F^m((w^s_{N_2}(b^u),w^u_{N_2}(b^u)),\bar{\mu}) - (v^s_0(a^u,a^s),v^u_0(a^s,a^u)) = F^m((\mathcal{O}(\eta_0^{N_2}),\bar{w}^u+b^u),\bar{\mu}) - (a^s,\mathcal{O}(\eta_*^{N_3})) = 0,
	\end{equation}
	where we have applied the facts that $w^s_{N_2}(b^u) = \mathcal{O}(\eta_0^{N_2})$ from (\ref{wBnd}) and $v^u_0(a^s,a^u) = \mathcal{O}(\eta_*^{N_3})$ from (\ref{vBnd}). We note that satisfying (\ref{2PulseMatch1}) and (\ref{2PulseMatch2}) necessarily satisfies all conditions of (\ref{2Pulse}). Indeed, (\ref{2Pulse5}) implies (\ref{2Pulse2}) since $W^s(0,\bar{\mu})$ is an invariant manifold and (\ref{2Pulse3}) follows from reversibility of the solution and the fact that $W^s(0,\bar{\mu}) = \mathcal{R}W^u(0,\bar{\mu})$. Hence, it now remains to solve (\ref{2PulseMatch1}) and (\ref{2PulseMatch2}). 
	
	Let us define the smooth function $H$ that depends on the variables $(b^u,a^s,a^u)$ whose roots correspond to satisfying (\ref{2PulseMatch1}) and (\ref{2PulseMatch2}). Note that $H$ has the expansion
	\begin{equation}\label{2PulseFn}
		H(b^u,a^s,a^u) := \begin{pmatrix}
			G_*(\bar{v}^s + a^u,0,\bar{\mu}) \\
			F^m((0,\bar{w}^u+b^u),\bar{\mu}) - (\bar{v}^s + a^s,0)
		\end{pmatrix} + \mathcal{O}(\eta_0^{N_2} + \eta_*^{N_3}).
	\end{equation}
	We will now work to apply Theorem~\ref{thm:Roots} to our function $H$. Using the notation of Theorem~\ref{thm:Roots}, let us take $x_0 = (0,0,0)$ and define $A$ to be the matrix
	\[
		A = \begin{bmatrix}
			\partial_{v^s}G_*(\bar{v}^s,0,\bar{\mu}) & 0 & 0 \\
			0 & \xi_1 & -1 \\
			0 & \xi_2 & 0
		\end{bmatrix},
	\] 
	where we have defined 
	\[
		(\xi_1,\xi_2) := \nabla_{(w^s,w^u)} F^m((0,\bar{w}^u),\bar{\mu}). 
	\]
	We note that $A$ is invertible because from Lemma~\ref{lem:G*Fn} we have that $\partial_{v^s}G_*(\bar{v}^s,0,\bar{\mu}) \neq 0$ since $(v^s,0)$ represents a transverse intersection between $W^u(0,\bar{\mu})$ and $W^s(u_*,\bar{\mu})$, and similarly, $\xi_2 \neq 0$ since varying $b^u$ in a neighbourhood of zero causes $F^m((0,\bar{w}^u+b^u),\bar{\mu})$ to locally parametrize a connected component of $W^u(0,\bar{\mu})$ transversely intersecting $W^s(u_*,\bar{\mu})$ at $(\bar{v}^s,0) \in \mathcal{I}_*\times\mathcal{I}_*$. Hence, $H(x_0) = \mathcal{O}(\eta_0^{N_2} + \eta_*^{N_3})$ and therefore $\|A^{-1}H(x_0)\| = \mathcal{O}(\eta_0^{N_2} + \eta_*^{N_3})$. Note furthermore that  
	\[
		\|1 - A^{-1}DH(b^u,a^s,a^u)\| = \mathcal{O}(|b^u| + |a^s| + |a^u| + \eta_0^{N_2} + \eta_*^{N_3}),
	\]
	and hence we may apply Theorem~\ref{thm:Roots} with $\kappa = \frac{1}{2}$ and $\rho = \frac{1}{4}$ and $N_2,N_3 \gg 1$ sufficiently large. Therefore, for each $N_2,N_3\gg 1$ sufficiently large there exists a solution, $(b^u,a^s,a^u) = (b^u_*,a^s_*,a^u_*)$, satisfying $H(b^u_*,a^s_*,a^u_*) = 0$ with the property that 
	\[
		\|(b^u_*,a^s_*,a^u_*)\| = \mathcal{O}(\eta_0^{N_2} + \eta_*^{N_3}).
	\] 
	Hence, we have satisfied the matching conditions (\ref{2PulseMatch1})-(\ref{2PulseMatch2}), and from the discussion above we have completed the proof.
\end{proof} %End of proof

Lemma~\ref{lem:Sym2Pulse} is weak in the sense that it only gives existence at single values of $\mu$, but one should note that our matching algorithm employed in Lemma~\ref{lem:Sym2Pulse} can be employed to locally continue the constructed symmetric  2-pulse in $\mu$ as well. Of course this local continuation will only work up to points in $\mu$ where $W^u(0,\mu)$ and $W^s(u_*,\mu)$ no longer intersect transversely.   

To obtain the generic bifurcation structure of homoclinic multi-pulse solutions of (\ref{Map}), we require a genericity hypothesis on the function $G_*$. To begin, by definition of the interval $K_0$, defined in Section~\ref{sec:u_*}, it is possible that multiple iterates of the same heteroclinic orbit lie in this interval. To identify these iterates as the same heteroclinic we introduce the quotient space which for each $\mu \in J$ identifies points on the same trajectory inside $K_0$. The resulting quotient space is then identified as the circle $S^1$ and we denote $q:K_0\times J \to S^1 \times J$ as the associated quotient map which acts as the identity between the $J$ components. Using the quotient map, we impose the following non-degeneracy assumption on the function $G_*$, defined in Lemma~\ref{lem:G*Fn}. 

\begin{hyp}\label{hyp:G*Fn} %Hypothesis: G* function hypothesis
	If $(a,\mu)\in\Gamma_\mathrm{loc}$ is such that $\partial_{v^s}G_*(a,0,\mu) = 0$, then $\partial_{v^s}G_*(\tilde{a},0,\mu) \neq 0$ for all $(\tilde{a},\mu)\in\Gamma_\mathrm{loc}$ with $q(\tilde{a},\mu) \neq q(a,\mu)$.
\end{hyp} %

\begin{figure} %Figure: Hypothesis 4
\centering
\includegraphics[width=0.7\textwidth]{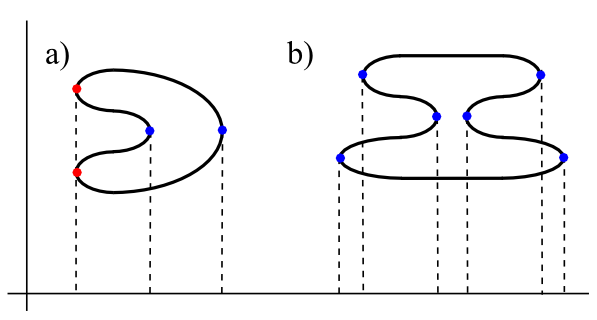}
\caption{From Hypothesis~\ref{hyp:Heteroclinic}, the set $q(\Gamma_\mathrm{loc})$ is a closed curve in $S^1 \times J$. Illustrative examples of two possible curves are given in the figure with $S^1$ representing the horizontal component and $J$ representing the vertical component. The leftmost saddle-nodes (in red) of curve (a) occur for the same value in $S^1$ and therefore violate Hypothesis~\ref{hyp:G*Fn}, making curve (a) inadmissible for the present analysis. Curve (b) is consistent with Hypothesis~\ref{hyp:G*Fn} since all of its saddle-nodes occur for distinct values in $S^1$.}
\label{fig:Hyp4}
\end{figure} 

Hypothesis~\ref{hyp:G*Fn} simply states that for any fixed value of $\mu \in J$ for which there exists at least two distinct heteroclinic orbits belonging to $\Gamma$ with this value of $\mu$, then only one heteroclinic orbit can lie along a quadratic tangency of $W^u(0,\mu)$ and $W^s(u_*,\mu)$. In terms of $q(\Gamma_\mathrm{loc})$ this means that two distinct saddle-nodes cannot occur along this curve for the same value in $S^1$. Illustrative examples of curves that violate and are consistent with Hypothesis~\ref{hyp:G*Fn} are provided in Figure~\ref{fig:Hyp4}. This assumption is of course generic and becomes necessary as we provide the results of Theorem~\ref{thm:MainResult} throughout this subsection and the next. In particular, the following lemma uses Hypothesis~\ref{hyp:G*Fn} to show that symmetric  2-pulses must lie along closed bifurcation curves in $\ell^\infty \times J$.

\begin{lem}\label{lem:Sym2PulseIsola} %Lemma: Symmetric 2-pulses lie on isolas
	Assume Hypothesis~\ref{hyp:Reverser}-\ref{hyp:G*Fn}. The bifurcation curves of each symmetric  2-pulse in $\ell^\infty\times J$ of (\ref{Map}) is a smooth closed curve.
\end{lem}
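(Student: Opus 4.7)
\textbf{Proof Proposal for Lemma~\ref{lem:Sym2PulseIsola}.} The strategy is to extend the matching construction of Lemma~\ref{lem:Sym2Pulse} from a single value of $\mu$ to a global one-parameter family, and then invoke Hypothesis~\ref{hyp:G*Fn} to rule out the only degeneracies that could prevent smooth continuation along the resulting bifurcation curve.

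First I will parameterize the admissible data for a symmetric $2$-pulse. Using the parameterization $\gamma:[0,1]\to \bar{\Gamma}$ of Hypothesis~\ref{hyp:Heteroclinic}(3), let $\mu(s)$ denote the induced $\mu$-coordinate on the loop, and note that a lift of $\gamma$ gives a curve of heteroclinic orbits $\bar{v}(s) \in \Gamma_{\mathrm{loc}}$. A symmetric $2$-pulse is built from two such ``seed'' heteroclinics: an outer one $\bar{v}(s_1)$ (which, by reversibility, controls both the left and right ends of the pulse) and an inner one $\bar{v}(s_2)$ (which controls the central $u_*$-to-$0$-to-$u_*$ passage). I will rerun the construction of Lemma~\ref{lem:Sym2Pulse} with $s_1,s_2$ treated as additional arguments: the variables $(b^u,a^s,a^u)$ and the matching function $H$ from~(\ref{2PulseFn}) extend smoothly in $(s_1,s_2)$, and at points where $\partial_{v^s}G_*(\bar{v}(s_j),0,\mu(s_j))\neq 0$ the matrix $A$ used in Lemma~\ref{lem:Sym2Pulse} remains invertible uniformly. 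Theorem~\ref{thm:Roots} then solves the $(b^u,a^s,a^u)$-matching uniformly in $(s_1,s_2,\mu)$, provided $N_2,N_3$ are large enough, subject to the one remaining scalar constraint $\mu(s_1)=\mu(s_2)$ coming from the need for the outer and inner seeds to be heteroclinics at the same parameter.

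Next I will identify the bifurcation curve with the zero set of this scalar constraint. Define $\Phi:S^1\times S^1 \to \mathbb{R}$ by $\Phi(s_1,s_2):=\mu(s_1)-\mu(s_2)$, where I have used the closed-loop property to view the seed parameters as living on $S^1$. The candidate bifurcation set is $\Phi^{-1}(0)$, modulo the shift quotient identifying different iterates of a single heteroclinic in $K_0$. Away from pairs where $\mu'(s_1)=\mu'(s_2)=0$, $\Phi$ is a submersion and $\Phi^{-1}(0)$ is a smooth $1$-manifold. The only possible singular points of $\Phi^{-1}(0)$ arise when both seeds sit at quadratic tangencies with equal $\mu$-value. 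By Hypothesis~\ref{hyp:G*Fn}, this is ruled out whenever the seeds correspond to distinct heteroclinic orbits (distinct points in $q(\Gamma_{\mathrm{loc}})$); for the diagonal case $s_1=s_2$, at a quadratic tangency the two branches of $\Phi^{-1}(0)$ meet transversely because $\Phi$ changes sign at different rates in the two variables. Hence $\Phi^{-1}(0)$ is a finite disjoint union of smooth closed curves in the torus.

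Finally, I will argue that the map sending an admissible pair $(s_1,s_2)\in\Phi^{-1}(0)$ (together with the unique solution $(b^u,a^s,a^u)$ of the matching equations) to the corresponding $2$-pulse in $\ell^\infty\times J$ is a smooth embedding. Smoothness follows from smooth dependence of the IFT solution on parameters, and injectivity up to the shift quotient follows because the prescribed plateau lengths $N_2,N_3$ pin down which iterates play which role, so the seeds can be read off from any $2$-pulse. Compactness of $\Phi^{-1}(0)$ then forces each connected component of the bifurcation curve in $\ell^\infty\times J$ to be closed. The main technical obstacle I anticipate is the switch of local coordinates on $\Gamma_{\mathrm{loc}}$ at the saddle-node values of a single seed: there $\Gamma_{\mathrm{loc}}$ must be parameterized by $v^s$ rather than $\mu$, and one must verify that the matching equations retain maximal rank in these coordinates. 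Hypothesis~\ref{hyp:G*Fn} is precisely what guarantees that this switch never has to occur simultaneously for both seeds, so the smooth $1$-manifold structure of $\Phi^{-1}(0)$ survives globally.
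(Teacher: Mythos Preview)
Your strategy is close in spirit to the paper's: both reduce the bifurcation curve to tracking two ``seed'' points on $\Gamma_{\mathrm{loc}}$ (your $s_1,s_2$, the paper's $\bar v^s+a^s$ and $\bar v^s+a^u$) subject to the constraint that they occur at the same $\mu$. The paper then cites \cite[Lemma~4.5]{Bramburger} for the closed-curve conclusion on this two-seed level, which is essentially your torus picture.

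There is, however, a genuine gap in your treatment of the diagonal. The leading-order constraint $\Phi(s_1,s_2)=\mu(s_1)-\mu(s_2)$ vanishes identically on $\{s_1=s_2\}$, so $\Phi^{-1}(0)$ always contains the full diagonal and is never a smooth $1$-manifold. Near a saddle-node $s_0$ of $\mu(\cdot)$, writing $\mu(s)\approx \mu_0 - c(s-s_0)^2$ gives $\Phi\approx c(s_2-s_1)(s_2+s_1-2s_0)$, so $\Phi^{-1}(0)$ is locally the union of two transversally crossing lines, one of which is the diagonal. Your sentence ``the two branches of $\Phi^{-1}(0)$ meet transversely'' is correct as a description of this crossing but contradicts your conclusion that $\Phi^{-1}(0)$ is a disjoint union of smooth closed curves. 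You have not explained why the diagonal component should be discarded, nor why the off-diagonal branch that passes through the diagonal at each saddle-node corresponds to genuine $2$-pulses at that crossing point.

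The paper's proof addresses exactly this. It first argues that any actual symmetric $2$-pulse must have $a^s\neq a^u$ (otherwise the Shil'nikov piece near $u_*$ is itself symmetric and, combined with the symmetric piece near $0$, yields a periodic orbit rather than a homoclinic), and more strongly $q(\bar v^s+a^s,\mu)\neq q(\bar v^s+a^u,\mu)$. This is what excludes the diagonal a priori. Second, the paper works with the \emph{actual} matching conditions $G_*(\bar v^s+a^s,v^u_0,\mu)=0$ and $G_*(\bar v^s+a^u,v^s_{N_3},\mu)=0$, with the exponentially small second arguments retained, rather than your leading-order $\Phi$. These corrections desingularize the crossing: the true $2$-pulse curve is $\mathcal{O}(\eta_*^{N_3}+\eta_0^{N_2})$-close to the off-diagonal branch of $\Phi^{-1}(0)$ but never actually touches the diagonal, and Hypothesis~\ref{hyp:G*Fn} then guarantees that along this off-diagonal curve the Jacobian of the full system never drops rank. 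To repair your argument you need both ingredients: the periodic-orbit observation to exclude the diagonal, and either the full matching equations or an explicit desingularization of $\Phi^{-1}(0)$ near the saddle-node crossings.
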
	

\begin{proof}
	Our work in Lemma~\ref{lem:Sym2Pulse} shows that the existence of symmetric  2-pulses is equivalent to satisfying the conditions (\ref{2PulseMatch1}) and (\ref{2PulseMatch2}) for some $(b^u,a^s,a^u)$ at a fixed $\mu \in J$. We begin by noting that in order to satisfy (\ref{2Pulse}) we must have that every homoclinic 2-pulse has $a^s\neq a^u$. Indeed, if $a^s= a^u$ then Lemma~\ref{lem:Shil*Sol} implies that $\{(v^s,v^u)\}_{n = 0}^{N_3}$ is reversible, and reversibility of $\{(w^s,w^u)\}_{n = 0}^{N_2}$ implies that we have constructed a periodic orbit of (\ref{Map}). This is impossible since the matching condition (\ref{2PulseMatch1}) dictates that our orbit lies on the unstable manifold of the fixed point $0$. Hence, every homoclinic 2-pulse has $a^s\neq a^u$. 
	
	Furthermore, we cannot have $q(\bar{v}^s + a^s,\mu) \neq q(\bar{v}^s + a^u,\mu)$ either. The reason for this is that by the definition of the quotient mapping, $q$, this implies that there exists some $p \in \mathbb{Z}$ such that $F^p((a^s,0),\mu) = (a^u,0)$. But then, by the uniqueness of solutions to (\ref{2PulseMatch1})-(\ref{2PulseMatch2}) we would have that the corresponding solution is equivalently generated by replacing $(a^s,0)$ with $F^p((a^s,0),\mu)$, thus arriving at the previously discussed scenario. Hence, $q(\bar{v}^s +a^s,\mu) \neq q(\bar{v}^s +a^u,\mu)$ for all symmetric  2-pulses of (\ref{Map}).   
	
	Now, following a curve of homoclinic 2-pulses is equivalent to following a curve in the variables $(b^u,a^s,a^u,\mu)$ with $(b^u,a^s,a^u)$ satisfying (\ref{2PulseMatch1}) and (\ref{2PulseMatch2}) for each $\mu \in J$. From our discussion above, we must have $q(\bar{v}^s +a^s,\mu) \neq q(\bar{v}^s +a^u,\mu)$ for each $\mu$ along the curve, and moreover, since our solution is a homoclinic orbit, it follows that 
	\[
		(\bar{v}^s+a^s,v^u_0(a^s,a^u)) \in W^u(0,\mu)
	\] 
	for all $a^s$ along the curve. Since $v^u_0(a^s,a^u)$ is exponentially small, it follows from Lemma~\ref{lem:G*Fn} that 
	\[
		G_*(\bar{v}^s+a^s,v^u_0(a^s,a^u),\mu) = 0
	\]    
	at each point along this curve. But, since $q(\bar{v}^s +a^s,\mu) \neq q(\bar{v}^s +a^u,\mu)$ at every point on this curve, the arguments of \cite[Lemma~4.5]{Bramburger} show that solutions which satisfy both
	\[
		\begin{split}
			G_*(\bar{v}^s+a^s,v^u_0(a^s,a^u),\mu) &= 0, \\
			G_*(\bar{v}^s+a^u,v^s_{N_3}(a^s,a^u),\mu) &= 0,	
		\end{split}
	\]  
	with $q(\bar{v}^s +a^s,\mu) \neq q(\bar{v}^s +a^u,\mu)$ must form a closed curve in $(a^s,a^u,\mu)$-space. Then, uniqueness of orbits of $F$ implies that this closed curve in $(a^s,a^u,\mu)$-space extends to a closed curve in $(b^u,a^s,a^u,\mu)$-space. This completes the proof.
\end{proof} %End of proof

\subsection{Asymmetric 2-Pulses}\label{subsec:Asym2} %Subsection: Asymmetric 2-Pulses -----------------------------------------------------------------------------------------------------------------

We now discuss the existence of asymmetric  2-pulse solutions of (\ref{Map}). Here, by definition, an asymmetric  2-pulse $u = \{u_n\}_{n\in\mathbb{Z}}$ satisfies
\begin{subequations}\label{Asym2Pulse}
	\begin{equation}\label{Asym2Pulse1}
		u_{-N_1-m} \in \Sigma_\mathrm{in}\cap W^u(0,\mu) 	
	\end{equation}
	\begin{equation}\label{Asym2Pulse2}
		u_n \in \mathcal{I}_*\times\mathcal{I}_*\ {\rm for}\ n\in\{-N_1-m,\dots,-m\}	
	\end{equation}
	\begin{equation}\label{Asym2Pulse3}
		u_{-m} \in \Sigma_\mathrm{out}\cap W^s(0,\mu) 	
	\end{equation}
	\begin{equation}\label{Asym2Pulse4}
		u_n \in \mathcal{I}_0\times\mathcal{I}_0\ {\rm for}\ n\in\{0,\dots,N_2\}	
	\end{equation}
	\begin{equation}\label{Asym2Pulse5}
		u_{N_2+m} \in \Sigma_\mathrm{in}\cap W^u(0,\mu) 	
	\end{equation}
	\begin{equation}\label{Asym2Pulse6}
		u_n \in \mathcal{I}_*\times\mathcal{I}_*\ {\rm for}\ n\in\{N_2+m,\dots,N_2 + N_3+m\}	
	\end{equation}
	\begin{equation}\label{Asym2Pulse7}
		u_{N_2+N_3+m} \in \Sigma_\mathrm{out}\cap W^s(0,\mu) 	
	\end{equation}
\end{subequations}  
for sufficiently large $N_1,N_2,N_3 \gg 1$, and we recall that $m \geq 0$ is the integer defined in Lemma~\ref{lem:mdefn}. We note that in the case $N_1 = N_3$, our definition becomes nearly identical to that of the symmetric  2-pulses, with the exception of the symmetry condition in (\ref{2Pulse1}). In this section we will show that there exists homoclinic 2-pulses with $N_1 = N_3$ but do not satisfy $\mathcal{R}u = u$ which bifurcate from the curve of symmetric  2-pulses. Furthermore, our work in this section covers to the much more general case of $N_1 \neq N_3$, which implies that the homoclinic 2-pulse cannot be symmetric. We begin by providing the following existence result, akin to Lemma~\ref{lem:Sym2Pulse} for symmetric 2-pulses.

\begin{lem}\label{lem:Asym2Pulse}%Lemma: Existence of asymmetric 2-pulses
	Assume Hypothesis~\ref{hyp:Reverser}-\ref{hyp:Heteroclinic} and that at $\mu = \bar{\mu}$ the manifolds $W^u(0,\bar{\mu})$ and $W^s(u_*,\bar{\mu})$ intersect transversely at the point $(\bar{v}^s,0,\bar{\mu}) \in \Gamma_\mathrm{loc}$. Then, there exists $M_2^\mathrm{a} > 0$ such that for all $N_1,N_2,N_3 > M_2^\mathrm{a}$ the map (\ref{Map}) evaluated at $\mu=\bar{\mu}$ has a homoclinic 2-pulse solution satisfying (\ref{Asym2Pulse}).
\end{lem}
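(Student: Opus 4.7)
The plan is to adapt the matching argument of Lemma~\ref{lem:Sym2Pulse} by tracking three independent local orbit pieces, since dropping the symmetry constraint means the two $u_*$-passages are no longer linked by reversibility. Using Lemma~\ref{lem:Shil*Sol} twice and Lemma~\ref{lem:Shil0Sol} once, I would construct a first $u_*$-passage $\{(v^s_n,v^u_n)\}_{n=0}^{N_1}$ with $v^s_0=\bar v^s+a^s_1$ and $v^u_{N_1}=\bar v^s+a^u_1$, a central $0$-passage $\{(w^s_n,w^u_n)\}_{n=0}^{N_2}$ with $w^s_0=\bar w^u+b^s$ and $w^u_{N_2}=\bar w^u+b^u$, and a second $u_*$-passage $\{(\tilde v^s_n,\tilde v^u_n)\}_{n=0}^{N_3}$ with $\tilde v^s_0=\bar v^s+a^s_2$ and $\tilde v^u_{N_3}=\bar v^s+a^u_2$. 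The Shilnikov-type bounds (\ref{Shil*Bnds}) and (\ref{Shil0Bnds}) yield that the six opposite-endpoint coordinates $v^u_0,\,v^s_{N_1},\,w^u_0,\,w^s_{N_2},\,\tilde v^u_0,\,\tilde v^s_{N_3}$ are each of size $\mathcal{O}(\eta_*^{N_1}+\eta_0^{N_2}+\eta_*^{N_3})$, together with their derivatives in the six free parameters $(a^s_1,a^u_1,b^s,b^u,a^s_2,a^u_2)$.

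I would then impose six scalar matching conditions. Two are tail conditions, encoded through $G_*$ from Lemma~\ref{lem:G*Fn},
\begin{align*}
H_1&:=G_*(\bar v^s+a^s_1,\,v^u_0,\,\mu)=0,\\
H_4&:=G_*(\bar v^s+a^u_2,\,\tilde v^s_{N_3},\,\mu)=0,
\end{align*}
which enforce (\ref{Asym2Pulse1}) and the $\mathcal{R}$-image of (\ref{Asym2Pulse7}); the remaining four are the two $F^m$-transition conditions
\begin{align*}
H_2&:=F^m\bigl((v^s_{N_1},\bar v^s+a^u_1),\mu\bigr)-(\bar w^u+b^s,\,w^u_0)=0,\\
H_3&:=F^m\bigl((w^s_{N_2},\bar w^u+b^u),\mu\bigr)-(\bar v^s+a^s_2,\,\tilde v^u_0)=0,
\end{align*}
each two-dimensional and each forcing the exit of one local piece to coincide with the entry of the next (the remaining conditions in (\ref{Asym2Pulse}) being automatic consequences of these transitions and the local piece constructions). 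Using the reverser identity $F^m\mathcal{R}=\mathcal{R}F^{-m}$ together with the defining property of $\bar w^u$ from Lemma~\ref{lem:mdefn}, one checks that $H:=(H_1,H_2,H_3,H_4)$ vanishes at the ansatz $x_0=0\in\mathbb{R}^6$ up to errors of size $\mathcal{O}(\eta_*^{N_1}+\eta_0^{N_2}+\eta_*^{N_3})$.

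The plan is then to apply Theorem~\ref{thm:Roots} with $x_0=0$. Examining which parameter enters each equation at leading order, the linearization $A=DH(0)$ has a block-diagonal form with diagonal blocks of sizes $1,\,2,\,2,\,1$ (after exponentially small off-diagonal terms are absorbed into the remainder). The two scalar blocks both equal $\partial_{v^s}G_*(\bar v^s,0,\bar\mu)$, which is nonzero by the transversality hypothesis and Lemma~\ref{lem:G*Fn}. The two $2\times 2$ blocks take the form $\left(\begin{smallmatrix}\xi_1&-1\\\xi_2&0\end{smallmatrix}\right)$, structurally identical to the matrix appearing in the proof of Lemma~\ref{lem:Sym2Pulse}, with each $(\xi_1,\xi_2)$ being the column of $DF^m$ along the relevant unstable-coordinate direction, one at $(0,\bar v^s)$ in the $u_*$-chart and the other at $(0,\bar w^u)$ in the $0$-chart.

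The main (and really only) obstacle is to check that $\xi_2\neq 0$ in both blocks. The block associated with $H_3$ is the one already handled in Lemma~\ref{lem:Sym2Pulse}: the curve $b^u\mapsto F^m((0,\bar w^u+b^u),\bar\mu)$ locally parameterizes a component of $W^u(0,\bar\mu)$ near $(\bar v^s,0)$, and the hypothesized transverse intersection with $W^s(u_*,\bar\mu)=\{v^u=0\}$ gives $\xi_2\neq 0$. For the $H_2$ block, the analogous statement is that $a^u_1\mapsto F^m((0,\bar v^s+a^u_1),\bar\mu)$ locally parameterizes a component of $W^u(u_*,\bar\mu)$ near $(\bar w^u,0)$; applying the reverser identities (\ref{RevMans}) to the hypothesized transverse intersection of $W^u(0,\bar\mu)$ and $W^s(u_*,\bar\mu)$ at $(\bar v^s,0)$ produces a transverse intersection of $W^s(0,\bar\mu)$ and $W^u(u_*,\bar\mu)$ at its $\mathcal{R}$-image, and invariance of these manifolds under $F$ propagates transversality along the heteroclinic to $(\bar w^u,0)$, giving $\xi_2\neq 0$ here as well. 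With $A$ uniformly invertible for large $N_1,N_2,N_3$, Theorem~\ref{thm:Roots} applied with $\kappa=1/2$ and $\rho=1/4$ produces a unique root of $H$ of norm $\mathcal{O}(\eta_*^{N_1}+\eta_0^{N_2}+\eta_*^{N_3})$, yielding the claimed homoclinic $2$-pulse for all $N_1,N_2,N_3>M_2^{\mathrm a}$.
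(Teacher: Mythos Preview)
Your proposal is correct and follows essentially the same route as the paper's proof: construct two $u_*$-passages via Lemma~\ref{lem:Shil*Sol}, one $0$-passage via Lemma~\ref{lem:Shil0Sol}, write six scalar matching conditions (two $G_*$ tail conditions and two $F^{\pm m}$ transition conditions), and apply Theorem~\ref{thm:Roots}. The only cosmetic difference is that you phrase the first transition as $F^m$ from $\Sigma_{\mathrm{out}}$ to the $0$-chart, whereas the paper writes the equivalent condition as $F^{-m}$ from the $0$-chart back to $\Sigma_{\mathrm{out}}$; your version makes the second $2\times2$ Jacobian block explicit and you correctly justify $\xi_2\neq 0$ for it via reversibility and propagation of transversality, a detail the paper simply omits by referring back to Lemma~\ref{lem:Sym2Pulse}.
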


\begin{proof}
	The proof of this lemma is very similar to that of Lemma~\ref{lem:Asym2Pulse}, and therefore we will only proceed until the methods become equivalent. As before, we take $\mu =\bar{\mu}$ fixed throughout, and again taking $(\bar{v}^s,0,\bar{\mu}) \in \Gamma_\mathrm{loc}$, we use Lemma~\ref{lem:mdefn} to define $\bar{w}^u$ so that 
	\[
		(\bar{w}^u,0) = F^{-m}((\bar{v}^s,0),\bar{\mu}) \in (\mathcal{I}_0\times\mathcal{I}_0)\cap W^u(0,\bar{\mu}) \cap W^s(u_*,\bar{\mu}).
	\]  
	Now, for any $a^s_1,a^u_1$ sufficiently small and $N_1 \geq 1$, Lemma~\ref{lem:Shil*Sol} gives the existence of a solution $\{v^s_{1,n},v^u_{1,n}\}_{n=0}^{N_1} \subset \mathcal{I}_*\times\mathcal{I}_*$ satisfying 
	\[
		v^s_{1,0} = \bar{v}^s + a^s_1, \quad v^u_{1,N_1} = \bar{v}^s + a^u_1,
	\]
	where $v^j_{1,n} = v^j_{1,n}(a^s_1,a^u_1)$ depends smoothly on $a^s_1,a^u_1\in\mathcal{I}_*$ for $j = s,u$. Similarly, for any $a^s_3,a^u_3 \in \mathcal{I}_*$ and $N_3 \geq 1$ we can use Lemma~\ref{lem:Shil*Sol} again to obtain a solution $\{v^s_{3,n},v^u_{3,n}\}_{n=0}^{N_3} \subset \mathcal{I}_*\times\mathcal{I}_*$ with the same properties as $\{v^s_{1,n},v^u_{1,n}\}_{n=0}^{N_1}$. 
	
	Then, using Lemma~\ref{lem:Shil0Sol} we have that for every $b^s,b^u\in\mathcal{I}_0$ and $N_2 \geq 1$, there exists a solution $\{w^s_n,w^u_n\}_{n=0}^{N_2} \subset \mathcal{I}_0\times\mathcal{I}_0$ satisfying 
	\[
		w^s_0 = \bar{w}^u + b^s, \quad w^u_{N_2} = \bar{w}^u + b^u,
	\]
	where $w^j_n = w^j_n(b^s,b^u)$ depends smoothly on $b^s,b^u\in\mathcal{I}_0$ for $j = s,u$. In a similar fashion to the proof of Lemma~\ref{lem:Sym2Pulse}, satisfying (\ref{Asym2Pulse}) now becomes equivalent to solving the following matching conditions:
	\begin{gather*}
		G_*(v^s_{1,0}(a^s_1,a^u_1),v^u_{1,0}(a^s_1,a^u_1),\bar{\mu}) = 0, \\
		F^{-m}((w^s_0(b^s,b^u),w^u_0(b^s,b^u)),\bar{\mu}) - (v^s_{1,N_1}(a^s_1,a^u_1),v^u_{1,N_1}(a^s_1,a^u_1)), \\
		F^{m}((w^s_{N_2}(b^s,b^u),w^u_{N_2}(b^s,b^u)),\bar{\mu}) - (v^s_{3,0}(a^s_3,a^u_3),v^u_{3,0}(a^s_3,a^u_3)), \\
		G_*(v^u_{3,N_1}(a^s_3,a^u_3),v^s_{3,N_1}(a^s_3,a^u_3),\bar{\mu}) = 0.
	\end{gather*}
	Based upon the criteria (\ref{Asym2Pulse}) it is easy to check that these conditions do indeed lead to a homoclinic 2-pulse solution of (\ref{Map}). Furthermore, we may gather these matching conditions to define a smooth function $H^a$ that depends on the variables $(a^s_1,a^u_1,b^s,b^u,a^s_3,a^u_3)$ so that the roots of $H^a$ are exactly solutions to our matching conditions. Using the asymptotic expansions (\ref{Shil0Bnds}) and (\ref{Shil*Bnds}) we find that $H^a$ has the asymptotic expansion
	\begin{equation}\label{H^a}
		H^a(a^s_1,a^u_1,b^s,b^u,a^s_3,a^u_3) = \begin{bmatrix}
			G_*(\bar{v}^s + a^s_1,0,\bar{\mu}) \\
			F^{-m}((\bar{w}^s+b^s,0),\bar{\mu}) - (0,\bar{v}^s + a^u_1) \\
			F^{m}((0,\bar{w}^s+b^s),\bar{\mu}) - (\bar{v}^s + a^s_3,0) \\
			G_*(\bar{v}^s + a^u_3,0,\bar{\mu})
		\end{bmatrix} + \mathcal{O}(\eta_*^{N_1} + \eta_0^{N_2} + \eta_*^{N_3}).
	\end{equation}
	We can now see that $H(0,0,0,0,0,0) = \mathcal{O}(\eta_*^{N_1} + \eta_0^{N_2} + \eta_*^{N_3})$, and obtaining roots of $H^a$ is simply an application of Theorem~\ref{thm:Roots}, which is handled in a nearly identical way to the symmetric case of Lemma~\ref{lem:Sym2Pulse}. Therefore the rest of the proof is omitted. 
\end{proof} %End of proof

We follow the existence proof of Lemma~\ref{lem:Asym2Pulse} with the following bifurcation result for the case that $N_1 \neq N_3$. 

\begin{lem}\label{lem:Asym2PulseIsola} %Lemma: Asymmetric 2-pulses lie on isolas
	Assume Hypothesis~\ref{hyp:Reverser}-\ref{hyp:G*Fn}. The bifurcation curve of each asymmetric  2-pulse of (\ref{Map}) with $N_1 \neq N_3$ in $\ell^\infty\times J$ is a smooth closed curve. 
\end{lem}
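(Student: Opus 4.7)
The plan is to mirror the strategy of Lemma~\ref{lem:Sym2PulseIsola}, now working with the matching function $H^a$ from (\ref{H^a}) in place of its symmetric analogue. A welcome simplification is that $N_1 \neq N_3$ automatically rules out any $\mathcal{R}$-symmetric 2-pulse on the resulting curve and hence excludes pitchfork bifurcations, so the only task is to prove closedness and smoothness of the bifurcation curve.

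First I would apply the Implicit Function Theorem to the middle two block rows of $H^a$---those coming from the $F^{\pm m}$ matching---to solve for the four intermediate variables $(a^u_1, b^s, b^u, a^s_3)$ as smooth functions of $(a^s_1, a^u_3, \mu)$. Invertibility of the relevant Jacobian block is inherited from the transverse intersection of $W^u(0,\mu)$ and $W^s(u_*,\mu)$ along $\Gamma_\mathrm{loc}$ assumed in Hypothesis~\ref{hyp:Heteroclinic} and handled in the same way as in the existence step of Lemma~\ref{lem:Asym2Pulse}. After this reduction the problem becomes the pair of scalar equations
\[
G_\ast\bigl(\bar v^s + a^s_1,\,\mathcal{O}(\eta_\ast^{N_1}+\eta_0^{N_2}+\eta_\ast^{N_3}),\,\mu\bigr)=0, \qquad G_\ast\bigl(\bar v^s + a^u_3,\,\mathcal{O}(\eta_\ast^{N_1}+\eta_0^{N_2}+\eta_\ast^{N_3}),\,\mu\bigr)=0
\]
in the three variables $(a^s_1, a^u_3, \mu)$.

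Next I would establish, along any connected component of the solution set, the distinctness condition $q(\bar v^s + a^s_1,\mu) \neq q(\bar v^s + a^u_3,\mu)$. Should equality hold, then by the shift equivariance of (\ref{Map}) together with the uniqueness clause of the matching reduction, the candidate orbit would collapse either onto a shifted heteroclinic connection or onto a periodic orbit of (\ref{Map}); either outcome is incompatible with a genuine 2-pulse homoclinic to $0$ once $N_1, N_2, N_3$ are large, and the asymmetry $N_1 \neq N_3$ is precisely what precludes the mirror-type degeneration that had to be handled separately in the symmetric case.

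Then I would invoke \cite[Lemma~4.5]{Bramburger}, exactly as in Lemma~\ref{lem:Sym2PulseIsola}: under Hypothesis~\ref{hyp:G*Fn} and the distinctness just established, the solution set of the two $G_\ast=0$ equations in $(a^s_1, a^u_3, \mu)$ is a smooth closed curve. Non-singularity of the two-equation system away from its own saddle-nodes follows from $\nabla_{(v^s,\mu)} G_\ast \neq 0$ on $\Gamma_\mathrm{loc}$ (Lemma~\ref{lem:G*Fn}), while Hypothesis~\ref{hyp:G*Fn} ensures that two tangencies of $\Gamma_\mathrm{loc}$ never coincide in the $S^1$ quotient, so the two scalar equations cannot degenerate simultaneously. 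Lifting back through the smooth dependence of $(a^u_1, b^s, b^u, a^s_3)$ on $(a^s_1, a^u_3, \mu)$ produces a smooth closed curve in the full parameter space, and hence in $\ell^\infty\times J$. The principal obstacle I anticipate is the distinctness step: unlike the symmetric case there is no reflection argument collapsing the endpoints of a single $u_*$-passage, and one must argue directly from shift equivariance and the disparity $N_1 \neq N_3$ that no shift-identification of $(a^s_1,\mu)$ with $(a^u_3,\mu)$ is compatible with the full matching system.
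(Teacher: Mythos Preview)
Your proposal is correct and follows exactly the approach the paper intends: the paper's own proof consists of the single sentence ``This proof is the same as that of Lemma~\ref{lem:Sym2PulseIsola},'' and your outline is precisely a spelled-out version of that reduction via $H^a$, the distinctness of $q(\bar v^s+a^s_1,\mu)$ and $q(\bar v^s+a^u_3,\mu)$, and the appeal to \cite[Lemma~4.5]{Bramburger}. Your anticipated obstacle in the distinctness step is real but minor---the condition $N_1\neq N_3$ rules out the symmetric collapse exactly as you indicate, and the paper evidently regards this as subsumed in ``the same proof.''
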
	

\begin{proof}
	This proof is the same as that of Lemma~\ref{lem:Sym2PulseIsola}.
\end{proof} %End of proof

We note again that in the case $N_1 = N_3$ it could be the case that Lemma~\ref{lem:Asym2Pulse} simply provides the existence of the symmetric  2-pulses only. The following lemma shows that that is not the case. 

\begin{lem}\label{lem:Sym2Pitchforks} %Lemma: Asymmetric 2-pulses bifurcate from symmetric 2-pulses
	Assume Hypotheses~\ref{hyp:Reverser}-\ref{hyp:G*Fn} are met. Then there exists $\eta \in (0,1)$ such that for each $N_2,N_3 \geq 1$ sufficiently large and $\mu_\mathrm{sn} \in \mathring{J}$, the location of a saddle-node bifurcation on the symmetric  2-pulse curve associated to $N_2,N_3$, precisely two branches of asymmetric  2-pulses (mapped into each other by $\mathcal{R}$) bifurcate from the symmetric  2-pulse associated to the integers $N_2,N_3$ at $\mu = \mu_\mathrm{pf}$ with $|\mu_\mathrm{pf} - \mu_\mathrm{sn}| = \mathcal{O}(\eta^{\min\{N_2,N_3\}})$. 
\end{lem}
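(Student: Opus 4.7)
The plan is to work with the asymmetric matching function $H^a$ from Lemma~\ref{lem:Asym2Pulse} in the case $N_1 = N_3$, view the symmetric $2$-pulse branch of Lemma~\ref{lem:Sym2Pulse} as the $\mathbb{Z}_2$-fixed solution branch of $H^a$, and detect the pitchfork by locating a zero of the antisymmetric block of $DH^a$ exponentially close to $\mu_\mathrm{sn}$. Using Lemma~\ref{lem:Shil*Sol} and (\ref{Shil0SolReverser}), one reads off the involution $\sigma$ on the parameter $x = (a^s_1,a^u_1,b^s,b^u,a^s_3,a^u_3)$ induced by composing the reverser $\mathcal{R}$ with a reflection of the lattice index about the midpoint of the $2$-pulse: it swaps $(a^s_1,a^u_1)\leftrightarrow(a^u_3,a^s_3)$ and $b^s\leftrightarrow b^u$. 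A direct check against (\ref{H^a}) shows that $H^a$ is $\mathbb{Z}_2$-equivariant with respect to $\sigma$ on the domain and the corresponding involution on the range, and that the three-dimensional fixed subspace of $\sigma$ parameterizes exactly the symmetric $2$-pulse branch $x_\mathrm{sym}(\mu)$ constructed in Lemma~\ref{lem:Sym2Pulse}.

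Along this branch $DH^a(x_\mathrm{sym}(\mu),\mu)$ decomposes, in a $\sigma$-adapted basis, into two $3\times 3$ blocks $J_\mathrm{sym}(\mu)$ and $J_\mathrm{asym}(\mu)$ acting on the $+1$ and $-1$ eigenspaces of $\sigma$, respectively. Simple zeros of $\det J_\mathrm{sym}$ mark saddle-nodes on the symmetric $2$-pulse curve, while simple zeros of $\det J_\mathrm{asym}$ are candidate pitchfork bifurcations to asymmetric $2$-pulses. The crucial observation is that, once the $\mathcal{O}(\eta_*^{N_1}+\eta_0^{N_2}+\eta_*^{N_3})$ tails in (\ref{H^a}) are dropped, the first three components of $H^a$ depend only on $(a^s_1,a^u_1,b^s)$ and the last three only on $(b^u,a^s_3,a^u_3)$, and these two triples are interchanged by $\sigma$. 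Consequently, in the $\sigma$-adapted basis the leading-order expressions for $J_\mathrm{sym}$ and $J_\mathrm{asym}$ coincide up to conjugation, so that
\[
\det J_\mathrm{asym}(\mu) - \det J_\mathrm{sym}(\mu) \;=\; \mathcal{O}\bigl(\eta^{\min\{N_2,N_3\}}\bigr)
\]
uniformly in $\mu$ along the symmetric branch, with $\eta := \max\{\eta_0,\eta_*\}\in(0,1)$ and using $N_1 = N_3$. Hypotheses~\ref{hyp:Heteroclinic}(ii) and \ref{hyp:G*Fn} imply that $\det J_\mathrm{sym}$ has a transverse zero at each saddle-node $\mu_\mathrm{sn}$ with $\mathcal{O}(1)$ slope in $\mu$, and this transversality passes to $\det J_\mathrm{asym}$; the implicit function theorem then produces a unique nearby zero $\mu_\mathrm{pf}$ of $\det J_\mathrm{asym}$ with $|\mu_\mathrm{pf}-\mu_\mathrm{sn}| = \mathcal{O}(\eta^{\min\{N_2,N_3\}})$.

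Finally, to upgrade this simple zero of $\det J_\mathrm{asym}$ into a genuine pitchfork yielding two asymmetric $2$-pulses interchanged by $\mathcal{R}$, I would perform a $\mathbb{Z}_2$-equivariant Lyapunov--Schmidt reduction of $H^a$ at $(x_\mathrm{sym}(\mu_\mathrm{pf}),\mu_\mathrm{pf})$ along the one-dimensional antisymmetric kernel, with amplitude $\alpha$ measuring the $\sigma$-odd part of $x - x_\mathrm{sym}(\mu_\mathrm{pf})$. Equivariance forces the reduced equation to be odd in $\alpha$ and therefore of the form $\alpha\,\phi(\alpha^2,\mu-\mu_\mathrm{pf}) = 0$, and the transverse crossing established above gives $\phi(0,0) = 0$ and $\partial_\mu\phi(0,0)\neq 0$; generic nonvanishing of the cubic coefficient, traceable back to the saddle-node nondegeneracy in the leading-order block problem, yields the standard pitchfork normal form, whose two nontrivial branches are interchanged by $\sigma$ and hence by $\mathcal{R}$. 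The main obstacle will be the bookkeeping in the block-decomposition step: one must verify that the $F^{\pm m}$-coupling terms in (\ref{H^a}) respect the $\sigma$-symmetry exactly, and that their linearizations, together with the exponentially small residuals coming from (\ref{Shil*Bnds}) and (\ref{Shil0Bnds}), contribute only $\mathcal{O}(\eta^{\min\{N_2,N_3\}})$ off-diagonal corrections in the $\sigma$-adapted basis, rather than order-one corrections that would destroy the near-degeneracy of $J_\mathrm{sym}$ and $J_\mathrm{asym}$.
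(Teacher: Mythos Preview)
Your approach is correct and reaches the same conclusion as the paper, but the route is genuinely different. The paper first performs an explicit Lyapunov--Schmidt reduction on $H^a$, solving for $(a^u_1,b^s,b^u,a^s_3)$ in terms of $(a^s_1,a^u_3,\mu)$ to obtain two scalar bifurcation equations $h_1,h_2$ whose leading parts are $G_*(\bar v^s+a^s_1,0,\mu)$ and $G_*(\bar v^s+a^u_3,0,\mu)$. It then forms the sum and difference $\mathcal H_1=h_1+h_2$, $\mathcal H_2=h_1-h_2$, solves $\mathcal H_1=0$ for $\mu=\mu_*(a^s_1,a^u_3)$ via $\partial_\mu G_*\neq0$, exploits the swap symmetry $\mathcal H_2(a^s_1,a^u_3)=-\mathcal H_2(a^u_3,a^s_1)$ to factor out $(a^s_1-a^u_3)$, and reads the bifurcating branch off a Taylor expansion using $\partial^2_{v^s}G_*\neq0$ at the quadratic tangency. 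Your argument instead keeps the full $6\times6$ problem, block-diagonalizes $DH^a$ in a $\sigma$-adapted basis, and compares $\det J_\mathrm{sym}$ with $\det J_\mathrm{asym}$ via the leading-order decoupling; this is more conceptual and packages the $k$-pulse generalization more cleanly, while the paper's hands-on reduction makes the role of $\partial^2_{v^s}G_*$ completely explicit and avoids any appeal to a cubic coefficient.

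One small point to tighten: you phrase the transverse-zero step as ``$\det J_\mathrm{sym}$ has a transverse zero at $\mu_\mathrm{sn}$ with $\mathcal O(1)$ slope in $\mu$'', but near a saddle-node the symmetric branch cannot be parameterized by $\mu$. You should instead parameterize the symmetric branch by arclength $s$, observe that $\det J_\mathrm{sym}$ has a simple zero in $s$ at $s_\mathrm{sn}$ (the derivative being $\partial^2_{v^s}G_*\cdot\partial_s a^s_1\neq0$ since $\partial_s\mu=0$ there), and then transfer the $\mathcal O(\eta^{\min\{N_2,N_3\}})$ estimate from $s$ to $\mu$ via the smooth map $s\mapsto\mu(s)$. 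The paper sidesteps this by treating $(a^s_1,a^u_3)$ as the primary variables and $\mu$ as the dependent one, which is a slightly cleaner way to organize the same computation.
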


\begin{proof}
	Let us fix $N_2 = N_3$ sufficiently large to guarantee the existence a symmetric  2-pulse by Lemmas~\ref{lem:Sym2Pulse} and \ref{lem:Sym2PulseIsola}. Using the function $H^a$ defined in (\ref{H^a}) we note that the corresponding homoclinic 2-pulse is symmetric if, and only if, $b^s = b^u$. A consequence of this fact is that if the corresponding 2-pulse is symmetric, then necessarily we have $a^s_1 = a^u_3$ and $a^u_1 = a^s_3$. We again note that we argued in the proof of Lemma~\ref{lem:Sym2PulseIsola} that $q(\bar{v}^s+a^s_1,\mu) \neq q(\bar{v}^s+a^u_1,\mu)$ at any point $\mu \in J$ for which a symmetric  2-pulse exists. Furthermore, one can see from the proof of Lemma~\ref{lem:Sym2PulseIsola} that a saddle-node bifurcation along the closed curve associated to the symmetric  2-pulse with $N_2,N_3$ sufficiently large occurs at $\mu_\mathrm{sn} = \mu_0 + \mathcal{O}(\eta^{\min\{N_2,N_3\}})$, for some $\eta \in (0,1)$, where $\mu_0 \in J$ is such that $W^u(0,\mu_0)$ and $W^s(u_*,\mu_0)$ intersect in a quadratic tangency. Now, Hypothesis~\ref{hyp:G*Fn} implies that 
	\[
		\begin{split}
			&\partial_{v^s}G_*(v^s_{1,0}(a^s_1,a^u_1),v^u_{1,0}(a^s_1,a^u_1),\mu_\mathrm{sn}) = 0, \\ 
			&\partial_{v^s}G_*(v^u_{1,N_2}(a^s_1,a^u_1),v^s_{1,N_2}(a^s_1,a^u_1),\mu_\mathrm{sn}) \neq 0,  
		\end{split}
	\]
	or  
	\[
		\begin{split}
			&\partial_{v^s}G_*(v^s_{1,0}(a^s_1,a^u_1),v^u_{1,0}(a^s_1,a^u_1),\mu_\mathrm{sn}) \neq 0, \\ 
			&\partial_{v^s}G_*(v^u_{1,N_2}(a^s_1,a^u_1),v^s_{1,N_2}(a^s_1,a^u_1),\mu_\mathrm{sn}) = 0.  
		\end{split}
	\]
	We will focus on the former case for the duration of this proof since the latter case is handled in exactly the same way. 
	
	Let us denote the values of $a^s_1 = a^u_3$ of the symmetric  2-pulse at $\mu = \mu_\mathrm{sn}$ simply by $a_\mathrm{sn}$, so that now we may apply a Lyapunov-Schmidt reduction to $H^a$ in a neighbourhood of $(a^s_1,a^u_3,\mu) = (a_\mathrm{sn},a_\mathrm{sn},\mu_\mathrm{sn})$ to write $(a^u_1,b^s,b^u,a^s_3)$ as smooth functions of $(a^s_1,a^u_3,\mu)$ for $\mu$. Furthermore, we obtain the bifurcation functions
	\[
		\begin{split}
		h_1(a^s_1,a^u_3,\mu) &= G_*(\bar{v}^s+a^s_1,0,\mu) + \mathcal{O}(\eta_*^{N_2} + \eta_0^{N_3}), \\
		h_2(a^s_1,a^u_3,\mu) &= G_*(\bar{v}^s+a^u_3,0,\mu) + \mathcal{O}(\eta_*^{N_2} + \eta_0^{N_3}),
		\end{split}
	\]   
	which now remain to be solved to obtain bifurcating asymmetric  2-pulses.
	
	Let us now introduce the function $\mathcal{H}(a^s,a^u,\mu)$ given by
	\[
		\begin{split}
		\mathcal{H}(a^s,a^u,\mu) &= \begin{bmatrix} 
			\mathcal{H}_1(a^s_1,a^u_3,\mu) \\
			\mathcal{H}_2(a^s_1,a^u_3,\mu)
		\end{bmatrix} \\
		:&= \begin{bmatrix}
			h_1(a^s_1,a^u_3,\mu) + h_2(a^s_1,a^u_3,\mu) \\
			h_1(a^s_1,a^u_3,\mu) - h_2(a^s_1,a^u_3,\mu)
		\end{bmatrix} \\
		 &= \begin{bmatrix}
			G_*(\bar{v}^s+a^s_1,0,\mu) + G_*(\bar{v}^s+a^u_3,0,\mu) \\
			G_*(\bar{v}^s+a^s_1,0,\mu) - G_*(\bar{v}^s+a^u_3,0,\mu)
		\end{bmatrix} + \mathcal{O}(\eta_*^{N_2} + \eta_0^{N_3})
		\end{split}
	\] 
	For notational convenience we will drop the $\mathcal{O}(\eta_*^{N_2} + \eta_0^{N_3})$ terms when analyzing the function since Theorem~\ref{thm:Roots} implies that a root of 
	\[
		\begin{bmatrix}
			h_1(a^s_1,a^u_3,\mu) + h_2(a^s_1,a^u_3,\mu) \\
			h_1(a^s_1,a^u_3,\mu) - h_2(a^s_1,a^u_3,\mu)
		\end{bmatrix}	
	\] 
	can be extended uniquely to a root of $\mathcal{H}$. Now, notice that $\partial_\mu\mathcal{H}_1(a_\mathrm{sn},a_\mathrm{sn},\mu_\mathrm{sn}) = 2\partial_\mu G_*(a_\mathrm{sn},0,\mu_\mathrm{sn}) \neq 0$, so that the implicit function theorem implies that we may solve $\mathcal{H}_1(a^s,a^u,\mu)$ near $(a_\mathrm{sn},a_\mathrm{sn},\mu_\mathrm{sn})$ uniquely for $\mu = \mu_*(a^s_1,a^u_3)$ as a function of $(a^s_1,a^u_3)$ with $\mu_*(a_\mathrm{sn},a_\mathrm{sn}) = \mu_\mathrm{sn}$. The function $\mu_*$ further has the property that $\mu_*(a^s_1,a^u_3) = \mu_*(a^u_3,a^s_1)$ for all $(a^s_1,a^u_3)$. 
	
	We now obtain roots of $\mathcal{H}_2$. Putting $\mu_*$ into $\mathcal{H}_2$ we note that 
	\begin{equation}\label{H2Symmetry}
		\mathcal{H}_2(a^s_1,a^u_3,\mu_*(a^s_1,a^u_3)) = -\mathcal{H}_2(a^u_3,a^s_1,\mu_*(a^u_3,a^s_1))	
	\end{equation}
	for all $(a^s_1,a^u_3)$ by simply using the form of $\mathcal{H}_2$ and the symmetry of $\mu_*$. Expanding $\mathcal{H}_2$ as a Taylor series about $(a^s_1,a^u_3) = (a_\mathrm{sn},a_\mathrm{sn})$ gives 
	\[
		\begin{split}
		\mathcal{H}_2(a^s_1,a^u_3,\mu_*(a^s_1,a^u_3)) &= \partial^2_{v^s}G_*(a_\mathrm{sn},0,\mu_\mathrm{sn})(a^s_1 - a_\mathrm{sn})^2 - \partial^2_{v^s}G_*(a_\mathrm{sn},0,\mu_\mathrm{sn})(a^u_3 - a_\mathrm{sn})^2 + \mathcal{O}(|a^s_1 - a_\mathrm{sn}|^3 + |a^u_3 - a_\mathrm{sn}|^3)  \\
		&= (a^s_1 - a^u_3)\bigg(2\partial^2_{v^s}G_*(a_\mathrm{sn},0,\mu_\mathrm{sn})(a^s_1 + a^u_3 - 2a_\mathrm{sn}) + \mathcal{O}(|a^s_1 - a_\mathrm{sn}|^2 + |a^u_3 - a_\mathrm{sn}|^2)\bigg),
		\end{split}
	\]	
	where we are able to factor $(a^u_3 - a^s_1)$ out from all terms due to the symmetry (\ref{H2Symmetry}). Since $\mu_\mathrm{sn}$ is $\mathcal{O}(\eta^{\min\{N_2,N_3\}})$-close to a value of $\mu \in J$ where $W^u(0,\mu_0)$ and $W^s(u_*,\mu_0)$ intersect in a quadratic tangency, it follows that $2\partial^2_{v^s}G_*(a,0,\mu_\mathrm{sn}) \neq 0$, and hence we may use the implicit function theorem to solve the second factor of our expansion of $\mathcal{H}_2(a^s_1,a^u_3,\mu_*(a^s_1,a^u_3))$ for $a^s_1$ as a function of $a^u_3$ in a neighbourhood of $a^u_3 = a_\mathrm{sn}$, which shows that an asymmetric solution bifurcates from the symmetric solution given by $a^s_1 = a^u_3 = a_\mathrm{sn}$. The assertion that these bifurcating solutions are mapped into each other by $\mathcal{R}$ follows from the fact that if $u$ is a solution of (\ref{Map}), then so is $\mathcal{R}u$ and uniqueness of roots of $H^a$.
\end{proof} %End of proof

We now conclude this section with the following lemmas which extend Lemma~\ref{lem:Asym2PulseIsola}. The proofs will be omitted since the first is nearly identical to that of Lemmas~\ref{lem:Sym2PulseIsola} and \ref{lem:Asym2PulseIsola} and the second is identical to \cite[Lemma~4.6]{Bramburger}.

\begin{lem}\label{lem:Asym2PulseIsola2} %Lemma: Bifurcation curves of asymmetric 2-pulses with N_1 = N_3
	Assume Hypothesis~\ref{hyp:Reverser}-\ref{hyp:G*Fn}. The bifurcation curve of each asymmetric  2-pulse of (\ref{Map}) with $N_1 = N_3$ in $\ell^\infty\times J$ is either a smooth closed curve or a smooth curve with boundaries given by the pitchfork bifurcations described in Lemma~\ref{lem:Sym2Pitchforks}.
\end{lem}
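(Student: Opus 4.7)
The plan is to adapt the closed-curve argument of Lemma~\ref{lem:Sym2PulseIsola} and Lemma~\ref{lem:Asym2PulseIsola} to the $N_1=N_3$ asymmetric case, and to combine it with the pitchfork analysis of Lemma~\ref{lem:Sym2Pitchforks} to identify the only possible boundary points. Concretely, I would parametrize 2-pulses with $N_1=N_3$ via the zeros of the matching function $H^a$ from (\ref{H^a}). Inspecting the construction of $H^a$ together with the reversibility statement of Lemma~\ref{lem:Shil*Sol} and Lemma~\ref{lem:Shil0Sol}, a 2-pulse is symmetric precisely when $b^s = b^u$, and by uniqueness of the Shilnikov solutions this is equivalent to $a^s_1 = a^u_3$ (and to $a^u_1 = a^s_3$). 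Thus the asymmetric locus is the relatively open subset of $\{H^a = 0\}$ defined by $a^s_1 \neq a^u_3$.

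Next I would verify, exactly as in the proof of Lemma~\ref{lem:Sym2PulseIsola}, that along any connected component of this locus the non-shift condition $q(\bar{v}^s + a^s_1, \mu) \neq q(\bar{v}^s + a^u_3, \mu)$ must hold; otherwise, by shift-equivalence one could identify $(\bar{v}^s+a^s_1,0)$ with a shift of $(\bar{v}^s+a^u_3,0)$ and reduce the configuration to a shorter orbit or a periodic one, contradicting that it lies on $W^u(0,\mu)\cap W^s(u_*,\mu)$. With this condition in force, I would apply the implicit function theorem to the $F^{\pm m}$-matching components of $H^a$ exactly as in Lemma~\ref{lem:Asym2Pulse}, eliminating $(a^u_1,b^s,b^u,a^s_3)$ and reducing to the two scalar equations
\[
G_*(\bar{v}^s + a^s_1, \mathcal{O}(\eta_*^{N_1}), \mu) = 0, \qquad G_*(\bar{v}^s + a^u_3, \mathcal{O}(\eta_*^{N_3}), \mu) = 0
\]
on the three variables $(a^s_1, a^u_3, \mu)$. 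Invoking Hypothesis~\ref{hyp:G*Fn} and the quotient-space argument of \cite[Lemma~4.5]{Bramburger}, these equations cut out a smooth embedded one-dimensional manifold in $(a^s_1,a^u_3,\mu)$-space, which lifts to a smooth one-dimensional bifurcation curve in $\ell^\infty\times J$.

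The remaining task is to describe the boundary behavior of this smooth $1$-manifold. Hypothesis~\ref{hyp:Heteroclinic}(1) prevents any endpoint from lying on $\partial J$, and the exponential bounds (\ref{Shil*Bnds}) and (\ref{Shil0Bnds}) together with Lemma~\ref{lem:Asym2Pulse} keep the curve uniformly bounded in $\ell^\infty$, so the only way the curve can fail to close up is for its closure to meet the symmetric locus $\{a^s_1 = a^u_3\}$. There I would invoke Lemma~\ref{lem:Sym2Pitchforks}, which characterizes transverse crossings of the symmetric and asymmetric loci as pitchfork bifurcations sitting near the saddle-nodes of the symmetric isola. Hence, every connected component of the asymmetric $N_1=N_3$ locus either closes up as a loop in $\ell^\infty\times J$ or terminates at exactly two pitchfork points on a symmetric $2$-pulse isola, yielding the two alternatives of the lemma.

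The main obstacle in carrying this out rigorously is controlling the limit $a^s_1 \to a^u_3$ along a non-closed component, where the reduction to two $G_*$-equations becomes singular because of the common-shift issue above. This is precisely where the antisymmetric factorization $\mathcal{H}_2 = (a^s_1 - a^u_3)\bigl(2\partial_{v^s}^2 G_*(a_\mathrm{sn},0,\mu_\mathrm{sn})(a^s_1 + a^u_3 - 2a_\mathrm{sn}) + \text{h.o.t.}\bigr)$ from Lemma~\ref{lem:Sym2Pitchforks} does the work: it forces any such limit point to coincide with a zero of the second factor, which by the implicit function theorem is exactly the pitchfork locus guaranteed by Lemma~\ref{lem:Sym2Pitchforks}, and cannot occur at any other interior point of the symmetric isola.
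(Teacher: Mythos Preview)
Your proposal is correct and follows precisely the approach the paper indicates: the paper omits the proof entirely, stating only that it is ``nearly identical to that of Lemmas~\ref{lem:Sym2PulseIsola} and \ref{lem:Asym2PulseIsola}'', and your argument carries out exactly this adaptation together with the pitchfork identification from Lemma~\ref{lem:Sym2Pitchforks}. If anything, you supply more detail than the paper does, in particular your treatment of the limiting behaviour $a^s_1\to a^u_3$ via the antisymmetric factorization of $\mathcal{H}_2$, which is the correct mechanism for pinning down the boundary points.
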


\begin{lem}\label{lem:Asym2PulseIsola3} %Lemma: Pitchforks originate and terminate at opposite curvature
	Assume Hypothesis~\ref{hyp:Reverser}-\ref{hyp:G*Fn}. The branches of asymmetric  2-pulses described in Lemma~\ref{lem:Sym2Pitchforks} begin and end at pitchfork bifurcations near saddle-node bifurcations of symmetric  2-pulses of opposite curvature.
\end{lem}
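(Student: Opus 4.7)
My plan is to follow the approach of \cite[Lemma~4.6]{Bramburger}, adapted to the 2-pulse setting. By Lemma~\ref{lem:Asym2PulseIsola2}, each branch of asymmetric 2-pulses with $N_1 = N_3$ is either a smooth closed curve or a smooth curve whose boundary consists of the pitchfork bifurcations of Lemma~\ref{lem:Sym2Pitchforks}; my task is therefore to identify the far endpoint of each branch that originates at a pitchfork and to verify that the associated saddle-node has curvature opposite to that of the starting saddle-node.

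The first step is to reduce the problem to a geometric statement about pairs of points on $\Gamma_\mathrm{loc}$. As in the proofs of Lemmas~\ref{lem:Sym2PulseIsola} and \ref{lem:Asym2PulseIsola2}, an asymmetric 2-pulse corresponds, up to exponentially small corrections in $\eta_0^{N_2}+\eta_*^{N_3}$, to a pair $(a^s_1, a^u_3, \mu)$ with $a^s_1 \neq a^u_3$ satisfying
\[
G_*(\bar v^s + a^s_1, 0, \mu) = 0 = G_*(\bar v^s + a^u_3, 0, \mu),
\]
so that the points $(\bar v^s + a^s_1, 0, \mu)$ and $(\bar v^s + a^u_3, 0, \mu)$ both lie on $\Gamma_\mathrm{loc}$. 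By Hypothesis~\ref{hyp:G*Fn}, these project under $q$ to two distinct points of the closed loop $q(\Gamma_\mathrm{loc}) \subset S^1 \times J$ lying over the same $\mu$, so an asymmetric branch is identified with a smooth curve in the space of unordered pairs of distinct points of $q(\Gamma_\mathrm{loc})$ having common $\mu$-coordinate. Starting from one pitchfork, the two points emerge from a common saddle-node onto the two local branches of $q(\Gamma_\mathrm{loc})$ meeting there and travel along $q(\Gamma_\mathrm{loc})$ in opposite orientations while remaining at equal $\mu$. Since $q(\Gamma_\mathrm{loc})$ is closed, the pair must recombine at a second saddle-node, and the same perturbation argument that placed the original pitchfork $\mathcal{O}(\eta^{\min\{N_2,N_3\}})$-close to its saddle-node places this new pitchfork $\mathcal{O}(\eta^{\min\{N_2,N_3\}})$-close to the recombining saddle-node.

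The main obstacle will be verifying that the two saddle-nodes thus connected have opposite curvature, i.e.\ that $\partial^2_{v^s} G_*$ has opposite signs at the two merging points. The argument is topological: a saddle-node on $q(\Gamma_\mathrm{loc})$ is either a local $\mu$-maximum or a local $\mu$-minimum, and the two local branches of $q(\Gamma_\mathrm{loc})$ at that point lie entirely on one side of the critical $\mu$-value. Hence, when the pair splits from a pitchfork associated to a $\mu$-maximum of $q(\Gamma_\mathrm{loc})$, both points must move to smaller $\mu$; they can then recombine only by approaching a second saddle-node from below, which is necessarily a $\mu$-minimum. This forces the curvatures of $\partial^2_{v^s}G_*$ at the two saddle-nodes to have opposite signs. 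All remaining quantitative details, including the exponentially small bounds on the location of the second pitchfork and the smoothness of the asymmetric branch up to its boundary, then follow line-for-line the calculations in \cite[Lemma~4.6]{Bramburger}, with $\eta_*^{N_1}$ replaced throughout by $\eta_0^{N_2}+\eta_*^{N_3}$ to account for the additional intermediate excursion of the 2-pulse through the neighbourhood of $0$.
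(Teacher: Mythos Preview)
Your proposal is correct and takes essentially the same approach as the paper: the paper omits the proof entirely, stating only that it is identical to \cite[Lemma~4.6]{Bramburger}, which is precisely the argument you adapt to the 2-pulse setting with the appropriate replacement of $\eta_*^{N_1}$ by $\eta_0^{N_2}+\eta_*^{N_3}$.
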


We now conclude this section by remarking that Lemma~\ref{lem:Asym2PulseIsola} implies that asymmetric 2-pulses with $N_1 \neq N_3$ can only exhibit saddle-node bifurcations. More precisely, a result analogous to Lemma~\ref{lem:Asym2PulseIsola2} does not exist for the case $N_1 \neq N_3$. Hence, on the closed bifurcation curve of an asymmetric 2-pulse with $N_1\neq N_3$ we cannot have a bifurcation to another asymmetric 2-pulse, and hence these asymmetric 2-pulses are isolated in $\ell^\infty\times J$.

\subsection{Extension to $k$-Pulses}\label{subsec:kPulse} %Subsection: k-Pulses ----------------------------------------------------------------------------------------------------------------------------------

Here we now discuss how the previous work for $2$-pulses can be extended to $k$-pulses for arbitrary $k \geq 2$. We will demonstrate that having proven all details of Theorem~\ref{thm:MainResult} in the case $k =2$, the cases $k \geq 3$ follow in a straightforward way. For this reason we refrain from proving the remaining cases in full detail, but simply describe the problem setup and demonstrate that the methods are identical to the methods for the case $k = 2$. 

We begin by fixing some $k \geq 2$ and again take $\bar{v}^s \in\mathring{\mathcal{I}}_*$ and $\bar{w} \in \mathring{\mathcal{I}}_0$ as given in Lemmas~\ref{lem:Sym2Pulse} and \ref{lem:Asym2Pulse}. Now, consider a sequence of arbitrary positive integers $N_1,N_2,\dots,N_{2k-1}$. Then, for $N_i$ with $i$ odd we take $a^s_i,a^u_i$ sufficiently small and use the results of Lemma~\ref{lem:Shil*Sol} to obtain the solution $\{v^s_{i,n},v^u_{i,n}\}_{n=0}^{N_i} \subset \mathcal{I}_*\times\mathcal{I}_*$ of (\ref{Shil*}) satisfying 
\begin{equation}\label{kPulse1}
	v^s_{i,0} = \bar{v}^s + a^s_i, \quad v^u_{i,N_1} = \bar{v}^s + a^u_i,
\end{equation}
where $v^j_{i,n} = v^j_{i,n}(a^s_i,a^u_i)$ depends smoothly on $a^s_i,a^u_i$ in a neighbourhood of $(0,0)$ for $j = s,u$ and all $n \in\{0,\dots,N_i\}$. Similarly, for $N_i$ with $i$ even we take $b^s_i,b^u_i$ sufficiently small and use the results of Lemma~\ref{lem:Shil0Sol} to obtain the solution $\{w^s_{i,n},w^u_{i,n}\}_{n=0}^{N_i} \subset \mathcal{I}_0\times\mathcal{I}_0$ of (\ref{Shil0}) satisfying 
\begin{equation}\label{kPulse2}
	w^s_{i,0} = \bar{w} + b^s_i, \quad v^u_{i,N_1} = \bar{w} + b^u_i,
\end{equation}
where $w^j_{i,n} = w^j_{i,n}(b^s_i,b^u_i)$ depends smoothly on $b^s_i,b^u_i$ in a neighbourhood of $(0,0)$ for $j = s,u$ and all $n \in\{0,\dots,N_i\}$. We note that the first index denotes which $N_i$ the solution pertains to and the second index relates to the iterations under the map $F$.

Having now the solutions (\ref{kPulse1}) and (\ref{kPulse2}), we seek to choose the variables 
\[
	(a^s_1,a^u_1,b^s_2,b^u_2,\dots,b^s_{2k-2},b^u_{2k-2},a^s_{2k-1},a^u_{2k-1})
\] 
appropriately to patch together the $2k-1$ solution fragments, as well as guarantee that the solution is indeed a homoclinic orbit. To do this, we first use Lemma~\ref{lem:G*Fn} to define the matching conditions 
\begin{equation}\label{kPulse3}
	G_*(\bar{v}^s + a^s_1,v^u_{1,0}(a^s_1,a^u_1),\mu) = 0	
\end{equation}
and
\begin{equation}\label{kPulse4}
	G_*(\bar{v}^s + a^u_{2k-1},v^u_{2k-1,N_{2k-1}}(a^s_{2k-1},a^u_{2k-1}),\mu) = 0,	
\end{equation}
which when satisfied for some $\mu \in J$ gives that the solution lies in $W^s(0,\mu)\cap W^u(0,\mu)$. Hence, satisfying conditions (\ref{kPulse3}) and (\ref{kPulse4}) guarantees that the solution is indeed a homoclinic orbit. Then, as in the Lemmas~\ref{lem:Sym2Pulse} and \ref{lem:Asym2Pulse}, the remaining matching conditions stitch together the successive solutions by requiring that for some fixed $\mu \in J$ we satisfy 
\begin{equation}\label{kPulse5}
	F^{-m}((w^s_{i+1,0}(b^s_{i+1},b^u_{i+1}),w^u_{i+1,0}(b^s_{i+1},b^u_{i+1})),\mu) - (v^s_{i,N_i}(a^s_i,a^u_i),v^u_{i,N_i}(a^s_i,a^u_i)))=0
\end{equation} 
for all $i \in \{1,3,\dots,2k-3\}$ and
\begin{equation}\label{kPulse6}
	F^m((w^s_{i,N_i}(b^s_i,b^u_i),w^u_{i,N_i}(b^s_i,b^u_i))),\mu) - (v^s_{i+1,0}(a^s_{i+1},a^u_{i+1}),v^u_{i+1,0}(a^s_{i+1},a^u_{i+1}))=0
\end{equation}
for all $i \in \{2,4,\dots,2k-2\}$. We recall that $m \geq 1$ is the constant given in Lemma~\ref{lem:mdefn} which approximately describes the number of iterates to move from $\mathcal{I}_0\times\mathcal{I}_0$ to $\mathcal{I}_*\times\mathcal{I}_*$. It should be noted that in the case $k = 2$ the matching conditions (\ref{kPulse3})-(\ref{kPulse6}) are exactly those used to obtain $2$-pulse solutions in Lemma~\ref{lem:Asym2Pulse}.

Taking some $\bar{\mu}\in J$ such that $W^u(0,\bar{\mu})$ and $W^s(u_*,\bar{\mu})$ intersect transversely, the conditions (\ref{kPulse3})-(\ref{kPulse6}) can be satisfied for arbitrary $k \geq 3$ in a nearly identical process to the case $k = 2$ handled above. Furthermore, searching for symmetric solutions requires the added condition that $b^s_k = b^u_k$, which can be used to reduce the number of equations in (\ref{kPulse3})-(\ref{kPulse6}) to be solved, much like the case $k = 2$ handled in Lemma~\ref{lem:Sym2Pulse}. Upon performing this matching to obtain $k$-pulses when $W^u(0,\bar{\mu})$ and $W^s(u_*,\bar{\mu})$ intersect transversely, we may prove the remaining statements of Theorem~\ref{thm:MainResult} by following as in the case $k =2$ detailed above.

\section{Discussion}\label{sec:Discussion} %Section: Discussion ---------------------------------------------------------------------------------------------------------------------------------

In this paper, we analyzed the existence and generic bifurcation structure of multi-pulse solutions to lattice dynamical systems posed on $\mathbb{Z}$. These results extended previous investigations into single-pulses \cite{Bramburger} and go far beyond what is known for multi-pulses in the continuous spatial setting \cite{2Pulse}. To obtain our results we analyzed homoclinic orbits of two-dimensional reversible maps which enter and leave a neighbourhood of another fixed point multiple times. Similar to the case of single-pulses, the key to demonstrating the existence and bifurcation structure of these solutions is to understand the global bifurcation structure of back solutions, which manifest themselves as heteroclinic orbits of the associated map. Importantly, the bifurcation structure of the back solutions generically dictates whether single-pulse solutions will snake or not, but we saw in this work that regardless of the behaviour of single-pulse solutions, generically all multi-pulse solutions lie along isolas. Finally, we used a spatially-discrete Nagumo equation to demonstrate our results numerically as well as analytically confirm the hypotheses required to apply our main results. 

As in many investigations into localized structures in the continuous spatial setting, here we assumed that the associated spatial dynamical system has a reversible structure. We saw in the example of the Nagumo equation that the reversible structure of the spatial dynamical system follows from the symmetry of the coupling function. Moreover, reversibility greatly simplifies our analysis since front and back solutions of the lattice dynamical system must lie in one-to-one correspondence with each other, thus allowing one to formulate Hypothesis~\ref{hyp:Heteroclinic} in terms of only back solutions. In this way, reversibility greatly reduces the complexity of presenting the results and providing the existence proofs. Reversible symmetry also lends itself to the existence of symmetric solutions, which have a richer bifurcation structure than asymmetric solutions due to the symmetry-breaking pitchfork bifurcations occurring near the saddle-nodes along their bifurcation curves. Therefore, in the absence of reversible symmetry one would be required to formulate hypotheses akin to Hypothesis~\ref{hyp:Heteroclinic} for both front and back solutions, and then appropriately follow much of the analysis in this manuscript to obtain multi-pulse solutions of an associated lattice dynamical system. 

Another approach to understanding the effect of losing reversibility would be to consider non-reversible perturbations, for which we conjecture that the isolas of asymmetric multi-pulses described herein perturb regularly and remain distinct from each other. The case of the symmetric multi-pulses becomes more delicate. That is, it has been shown that non-reversible perturbations can cause the snaking bifurcation curves of single-pulse solutions in the lattice setting to degenerate into isolas \cite{Yulin2}. This process is partially understood by the fact that the pitchfork bifurcations do not survive the symmetry-breaking perturbations, thus causing the bifurcation curves to fragment into isolas. Therefore, it should be expected that a similar fragmentation takes place along the isolas of symmetric multi-pulse solutions since we again have pitchfork bifurcations that should not be assumed to be robust under a symmetry-breaking perturbation. Nonetheless, based on the arguments in the previous paragraph, we expect that all multi-pulse solutions perturb regularly and remain as steady-state solutions to the lattice system, with only their organization in parameter space being effected by such a symmetry-breaking perturbation. 

Finally, a future direction for this research is to understand the existence, stability, and bifurcation structure of localized solutions with multiple distinct regions of localization (hereby multi-localized solution) on two-dimensional lattices. In Figure~\ref{fig:2D} we present an isola of multi-localized solutions to the Nagumo lattice dynamical system on a two-dimension lattice, given by
\begin{equation}\label{LDS2D}
	\dot{U}_{n,m} = d(U_{n+1,m} + U_{n-1,m} + U_{n,m+1} + U_{n,m-1} - 4U_{n,m}) + U_{n,m}(U_{n,m} - \mu)(1-U_{n,m}), \quad (n,m)\in\mathbb{Z}^2.
\end{equation}
This numerical computation and others lead one to conjecture that similar to the one-dimensional lattice case, multi-localized solutions of (\ref{LDS2D}) all lie along isolas in parameter space. The major barrier to proving this is that the spatial dynamics methods of this manuscript cannot be extended to demonstrate the existence of localized solutions to systems on two-dimensional lattices. An alternative method to proving the existence and bifurcation structure of multi-localized solutions to (\ref{LDS2D}) is to continue solutions up from the singular parameter value $d = 0$, as was done to confirm Hypothesis~\ref{hyp:Heteroclinic} in Proposition~\ref{prop:Heteroclinic}. This method has recently been applied to system (\ref{LDS2D}) to demonstrate that simple square patterns exhibit snaking bifurcation curves for $0 < d \ll 1$ \cite{Bramburger2}. The drawback to this method is that distinct multi-localized solutions must be analyzed individually, requiring an a priori knowledge of the bifurcation structure. Hence, it remains to find an efficient method for examining localized solutions to lattice dynamical systems posed on two-dimensional lattices. 

\begin{figure} %Figure: 2D multi-pulses
\centering
\includegraphics[width=0.9\textwidth]{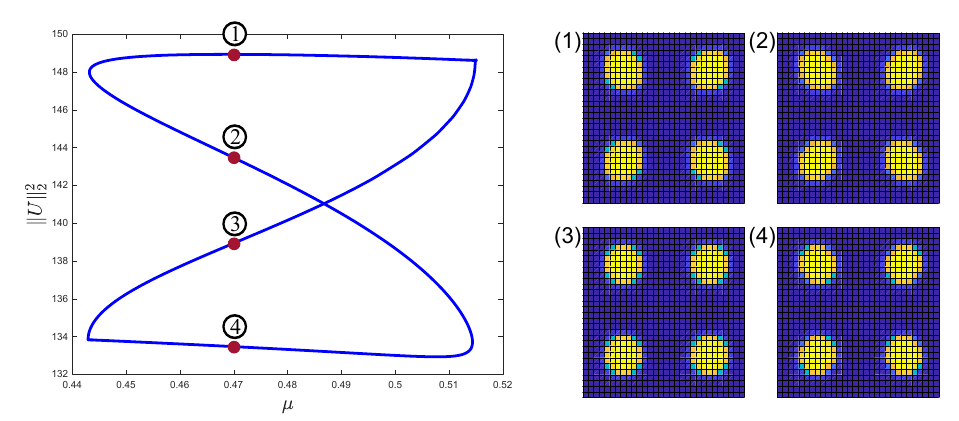}
\caption{An isola of a $D_4$-symmetric multi-localized steady-state of (\ref{LDS2D}) with $d = 0.05$. Contour plots of sample profiles are provided at $\mu = 0.47$.}
\label{fig:2D}
\end{figure} 

\paragraph{Acknowledgements.}
This work was supported by an NSERC PDF. The author would also like to thank Bj\"orn Sandstede for many conversations which led to this work.

%%%%%%%%%%%%%%%%%%%%%%%%%%%%%%%%%%%%%%%%%%%%%%%%%%%%%%%%%%%%%%%%%%%%%%%%%%%%

\end{document}